\theoremstyle{plain}
\newtheorem{theorem}{Theorem}[section]
\newtheorem{lemma}{Lemma}[section]
\newtheorem{corollary}{Corollary}[section]
\newtheorem{proposition}{Proposition}[section]
\theoremstyle{definition}
\newtheorem{definition}{Definition}[section]
\newtheorem{remark}{Remark}[section]
\newtheorem{example}{Example}[section]
\begin{document}

\title{Contact structures on Lie algebroids}
\author{Cristian Ida and Paul Popescu}
\date{}
\maketitle
\begin{abstract}
In this paper we generalize the main notions from the geometry of (almost) contact manifolds in the category of Lie algebroids. Also, using the framework of generalized geometry, we obtain an (almost) contact Riemannian Lie algebroid structure on a vertical Liouville distribution over the big-tangent manifold of a Riemannain manifold.
\end{abstract}

\medskip 
\begin{flushleft}
\strut \textbf{2010 Mathematics Subject Classification:} 22A22, 17B66, 53C15, 53C25.

\textbf{Key Words:} Lie algebroid, contact structure, big-tangent manifold, framed $f$-structure. 
\end{flushleft}

\section{Introduction}
\setcounter{equation}{0}
The importance of contact and symplectic geometry is without question. Contact manifolds can be viewed as an odd-dimensional counterpart of symplectic manifolds. Both contact and symplectic geometry are motivated by the mathematical formalism of classical mechanics, where one can consider either the even-dimensional phase space of a mechanical system or the odd-dimensional extended phase space that includes the time variable. For more about contact geometry the reader can consult the outstanding works \cite{Bl1, Bl, B-G}.

On the other hand, in the last decades, the Lie algebroids have an important place in the context of some different categories in differential geometry and mathematical physics and represent an active domain of research. The Lie algebroids, \cite{Mack2}, are generalizations of Lie algebras and integrable distributions. In fact a Lie algebroid is an anchored vector bundle with a Lie bracket on module of sections and many geometrical notions which involves the tangent bundle were generalized to the context of Lie algebroids.  In the category of almost complex geometry the notion of almost complex Lie algebroid over almost complex manifolds was introduced in \cite{B-R} as a natural extension of the notion of an almost complex manifold to that of an almost complex Lie algebroid. More generally, in \cite{A-C-N, Cr-Ida, I-Po, Po2}, is considered the notion of almost complex Lie algebroid over a smooth manifold and some problems concerning the geometry of almost complex Lie algebroids over smooth manifolds are studied in relation with corresponding notions from the geometry of almost complex manifolds. Taking into account the major role of (almost) complex geometry in the study of (almost) contact geometry, a natural generalization of (almost) contact geometry of manifolds to that of (almost) contact Lie algebroids can be of some interests. We notice that, for the particular class of Lie algebroids defined by the tangent bundle along the leaves of a foliation of odd dimension, the contact structures are introduced and studied in some recent papers \cite{Ca-Pin-Pres, Pin-Pres} under the name of foliated contact structures. In general, the notion of contact Lie algebroid appear in some very recent talks (see \cite{P-C-M0, P-C-M1, P-C-M2}), where this notion is used in order to obtain Jacobi manifolds on spheres of linear Poisson manifolds with a bundle metric. Also, the Albert cosymplectic and contact reduction theorems are extended in the Lie algebroid framework, and this reduction theory can represent a rich source in obtaining some new examples of cosymplectic or contact Lie algebroids (see \cite{P-C-M0}). The study of symplectic Lie algebroids and their reductions can be found for instance in \cite{I-M-D-M-P}. 

Our aim in this paper is to generalize some basic facts from the (almost) contact geometry on odd dimensional manifolds (see \cite{Bl1, Bl, B-G, Pi}), in the framework of Lie algebroids of odd rank and to present new examples of contact Lie algebroids. This generalization is possible mainly using the differential calculus on Lie algebroids: exterior derivative, interior product, Lie derivative (see for instance \cite{M}), but also using the connections theory on Lie algebroids (see \cite{Fe}), and the technique of Riemannian geometry on Lie algebroids (see \cite{Bo}). 

The paper is organized as follows. In the second section we present the almost contact and almost contact Riemannian structures on Lie algebroids of odd rank and we give the main properties of these structures in relation with similar properties from the case of almost contact manifolds. In the third section we present the normal almost contact structures on Lie algebroids, we study these structures,  and using the definition of the direct product of two Lie algebroids (see \cite{Mack2}), we characterize the direct product of two Lie algebroids endowed with some additional (almost Hermitian and almost contact Riemannian) structures. In the fourth section we give the basic definitions and results about contact structures on Lie algebroids in relation with similar notions from contact manifolds theory, we present some examples (see \cite{P-C-M1, P-C-M2}), we present a bijective corespondence between contact Riemannian structures and almost contact Riemannian structures on Lie algebroids, and we give some characterizations of contact Riemannian Lie algebroids. Also, the notions of $K$-contact, Sasakian and Kenmotsu Lie algebroids are introduced and some of their properties are studied as in the manifolds case. In the last section, using the framework of generalized geometry and starting from the geometry of big-tangent manifold introduced and intensively studied in \cite{Va13}, we obtain an (almost) contact Riemannian structure on the vertical Liouville distribution over the big-tangent manifold of a paracompact manifold $M$ which admits a Riemannian metric $g$. More exactly, we construct a vertical framed Riemannian $f(3,1)$-structure on the vertical bundle over the big-tangent manifold of a Riemannian manifold $(M,g)$, and when we restrict this structure to a vertical Liouville distribution which is integrable (so it is a Lie algebroid), we obtain an (almost) contact Riemannian structure on this Lie algebroid. 

Other problems and some future works can be adressed, as for instance: the study of deformations of Sasakian structures on Lie algebroids, the study of curvature problems on contact Riemannian Lie algebroids, $K$-contact, Sasaki and Kenmotsu Lie algebroids as well as the study of $F_E$-sectional curvature and Schur type theorem on Sasakian Lie algebroids. Also, taking into account the recent harmonic theory on Riemannian Lie algebroids (see \cite{Ba1}), a harmonic and $C$-harmonic theory for differential forms on Sasakian Lie algebroids can be investigated, since every almost contact Lie algebroid will be invariantly oriented (see Corollary \ref{c1}). Another important problem is that of integrability of Jacobi structures, being close related to that of Poisson structures and giving rise to contact groupoids. The progress in this direction is described at large in the recent paper \cite{C-S}, 
but we do not consider here relations with contact groupoids, which would involve some more problems, beyond the scope of our work.

The main notions introduced here are natural generalizations from the category of manifolds to the category of Lie algebroids and the most proofs are similarly with the ones given for the case of (almost) contact manifolds (see for instance \cite{Bl1, Bl, Pi}). For this reason they are omitted here.

\section{Almost contact Lie algebroids}
\setcounter{equation}{0}
In this section we define the almost contact and almost contact Riemannian structures on Lie algebroids and some properties of these structures are analyzed by analogy with the almost contact manifolds case (see \cite{Bl1, Bl, Pi}).

Let $p:E\rightarrow M$ be a vector bundle of \textit{rank} $m$ over a smooth $n$-dimensional manifold $M$, and $\Gamma(E)$ the $C^\infty(M)$-module of sections of $E$. A \textit{Lie algebroid structure} on $E$ is given by a triplet $(E,\rho_E,[\cdot,\cdot]_E)$, where $[\cdot,\cdot]_E$ is a Lie bracket on $\Gamma(E)$ and $\rho_E:E\rightarrow TM$ is called the \textit{anchor map}, such that if we also denote by $\rho_E:\Gamma(E)\rightarrow\mathcal{X}(M)$ the homomorphism of $C^\infty(M)$-modules induced by the anchor map then we have
\begin{equation}
\label{Liealg}
[s_1,fs_2]_E=f[s_1,s_2]_E+\rho_E(s_1)(f)s_2\,,\,\,\forall\,s_1,s_2\in\Gamma(E),\,\,\forall\,f\in C^\infty(M).
\end{equation}
\begin{remark}
If $(E,\rho_E,[\cdot,\cdot]_E)$ is a Lie algebroid over $M$, then the anchor map $\rho_E:\Gamma(E)\rightarrow\mathcal{X}(M)$ is a homomorphism between the Lie algebras $(\Gamma(E),[\cdot,\cdot]_E)$ and $(\mathcal{X}(M),[\cdot,\cdot])$.
\end{remark}
The exterior derivative on Lie algebroids is defined by 
\begin{equation}
(d_E\omega)(s_0,\ldots,s_p)=\sum_{i=0}^p(-1)^i\rho_E(s_i)(\omega(s_0,\ldots,\widehat{s_i},\ldots,s_p))
\label{I11}
\end{equation}
\begin{displaymath}
+\sum_{i<j=1}^p(-1)^{i+j}\omega([s_i,s_j]_E,s_0,\ldots,\widehat{s_i},\ldots,\widehat{s_j},\ldots,s_p),
\end{displaymath}
for $\omega\in \Omega^p(E)$ and $s_0,\ldots,s_p\in\Gamma(E)$, where $\Omega^p(E)$ is the set of $p$-forms on $E$. For more details about Lie algebroids and all calculus on Lie algebroids (interior product, Lie derivative etc.), we refer for instance to \cite{Fe, H-M, L-M-M, Mack2, M, L-Po}.

Let $(E,\rho_E,[\cdot,\cdot]_E)$ be a Lie algebroid of ${\rm rank}\,E=2m+1$ over a smooth $n$-dimensional manifold $M$. If there are  a section $\xi\in\Gamma(E)$, $1$--form $\eta\in\Gamma(E^*)$ and a $(1,1)$--tensor $F_E\in\Gamma(E\otimes E^*)$ such that
\begin{equation}
\label{II1}
F^2_E=-I_E+\eta\otimes \xi\,,\,\eta(\xi)=1,
\end{equation}
where $I_E$ denotes the Kronecker tensor  on $E$, then we say that $(F_E,\xi,\eta)$ is an \textit{almost contact structure} on the Lie algebroid $(E,\rho_E,[\cdot,\cdot]_E)$ or $(E,\rho_E,[\cdot,\cdot]_E, F_E,\xi,\eta)$ is an \textit{almost contact Lie algebroid}. The $1$--section $\xi$ is called \textit{Reeb section} or \textit{fundamental section}. Obviously, the set $\Gamma_\xi(E)=\{f\xi\,|\,f\in\mathcal{F}(M)\}$ has a module structure over $\mathcal{F}(M)$ and a Lie algebra structure called the \textit{Lie algebra of Reeb sections}.

Let $D_x=\{s_x\in E_x\,|\,\imath_{s_x}\eta_x=0\}\subseteq E_x$ for $x\in M$. Then $D=\cup_{x\in M}D_x$ is a vector subbundle of $E$ of rank $2m$ called \textit{contact subbundle} of $(E,\rho_E,[\cdot,\cdot]_E, F_E,\xi,\eta)$. We notice that $D=\ker\eta={\rm im}\,F_E$.

\begin{remark}
The above definition of the almost contact structure from \eqref{II1} does not depend on the anchor $\rho_E$ and the bracket $[\cdot,\cdot]_E$, hence it can be considered for a general vector bundle $E\rightarrow M$ of odd rank which will be referred to as an \textit{almost contact bundle}.
\end{remark}

Now, let us briefly present some basic properties of almost contact structures on  Lie algebroids (or general vector bundles when the notions does not depend the anchor or bracket).

\begin{proposition}
\label{pII1}
If $(F_E,\xi,\eta)$ is an almost contact structure on the vector bundle $E$ then:
\begin{enumerate}
\item[(i)] $F_E(\xi)=0$; (ii) $F_E^3=-F_E$; (iii) $\eta\circ F_E=0$; (iv) ${\rm rank}\,F_E=2m$.
\end{enumerate}
\end{proposition}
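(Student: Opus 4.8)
The plan is to derive all four assertions from the two defining identities $F_E^2=-I_E+\eta\otimes\xi$ and $\eta(\xi)=1$ by purely algebraic manipulation of $(1,1)$-tensor sections on $E$, passing to the fibres only at the very end. First I would record two preliminary observations: applying the first identity to $\xi$ gives $F_E^2\xi=-\xi+\eta(\xi)\xi=0$, and the normalization $\eta(\xi)=1$ shows that $\xi$ is a nowhere-vanishing section, so that any tensorial identity of the form $(\cdot)\otimes\xi=0$ may be cancelled to $(\cdot)=0$; both facts are used repeatedly below.

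For (i) I would apply $F_E^2$ to the section $F_E\xi$. On one hand $F_E^2(F_E\xi)=F_E(F_E^2\xi)=0$ by the preliminary computation; on the other hand the defining identity gives $F_E^2(F_E\xi)=-F_E\xi+\eta(F_E\xi)\,\xi$. Hence $F_E\xi=\eta(F_E\xi)\,\xi$. Applying $F_E$ once more, using $F_E^2\xi=0$ and substituting the previous equation again, yields $\eta(F_E\xi)^2\,\xi=0$, so $\eta(F_E\xi)=0$ and therefore $F_E\xi=0$. This short two-step cancellation is the only place where a little care is needed; the remaining items are formal.

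Item (ii) follows by composing the defining identity with $F_E$ on the left: $F_E^3=F_E\circ(-I_E+\eta\otimes\xi)=-F_E+\eta\otimes F_E\xi=-F_E$, using (i). For (iii) I would compose on the right instead: $F_E^3=(-I_E+\eta\otimes\xi)\circ F_E=-F_E+(\eta\circ F_E)\otimes\xi$; comparing with (ii) gives $(\eta\circ F_E)\otimes\xi=0$, hence $\eta\circ F_E=0$ since $\xi$ is nowhere zero.

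Finally, for (iv) I would determine $\ker F_E$ fibrewise. If $s_x\in E_x$ satisfies $(F_E)_x s_x=0$, then $0=(F_E^2)_x s_x=-s_x+\eta(s_x)\xi_x$, so $s_x\in\langle\xi_x\rangle$; conversely $\xi_x\in\ker(F_E)_x$ by (i). Thus $\ker(F_E)_x=\langle\xi_x\rangle$ is one-dimensional for every $x\in M$, and rank--nullity gives ${\rm rank}\,F_E=(2m+1)-1=2m$, in agreement with the identification ${\rm im}\,F_E=D=\ker\eta$ recorded before the proposition. I do not expect any genuine obstacle: the statement is essentially a transcription of the classical almost contact identities (cf. \cite{Bl}) into the Lie algebroid setting, the only structural input being that $\eta(\xi)=1$ forces $\xi$ to be nowhere vanishing so that tensorial factors of $\xi$ can be cancelled from equations.
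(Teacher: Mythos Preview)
Your proof is correct and is precisely the standard algebraic argument from almost contact manifold theory that the paper invokes without details (the paper's own proof consists only of the sentence ``It follows in a similar manner as in the case of almost contact manifolds''). Each step---the two-stage cancellation yielding $F_E\xi=0$, the left/right compositions giving (ii) and (iii), and the fibrewise kernel computation for (iv)---matches the classical derivation found e.g.\ in Blair's monograph \cite{Bl}, so nothing needs to be changed.
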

\begin{proof}
Follows in a similar manner as for almost contact manifolds (see \cite{Bl1, Bl}).
\end{proof}
Also, the following theorem holds.
\begin{theorem}
\label{tII1}
Let $E$ be a vector bundle with an almost contact structure $(F_E,\xi,\eta)$. There exists on $E$ a fiber-wise Riemannian  metric (or simply Riemannian metric) $g_E$ with the property
\begin{equation}
\label{II3}
g_E\left(F_E(s_1),F_E(s_2)\right)=g_E(s_1,s_2)-\eta(s_1)\eta(s_2)
\end{equation}
for any $s_1,s_2\in\Gamma(E)$.
\end{theorem}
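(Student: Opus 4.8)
The plan is to mimic the classical construction of a compatible metric on an almost contact manifold, adapted to the Lie algebroid setting. First I would start with an arbitrary Riemannian metric $h_E$ on $E$ (such a metric exists since $E$ is a vector bundle over a paracompact manifold, by a standard partition of unity argument). This $h_E$ need not be compatible with $(F_E,\xi,\eta)$ in any way, so the point is to correct it in two stages.

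The first correction is to make $\xi$ orthogonal to the contact subbundle $D=\ker\eta$ and of unit length while keeping $\eta$ as the associated $1$-form. Concretely, I would set
\begin{equation}
h'_E(s_1,s_2)=h_E\bigl(s_1-\eta(s_1)\xi,\,s_2-\eta(s_2)\xi\bigr)+\eta(s_1)\eta(s_2),
\label{eq:firststep}
\end{equation}
and check that $h'_E$ is again a Riemannian metric on $E$, that $h'_E(\xi,\xi)=1$, and that $h'_E(s,\xi)=\eta(s)$ for all $s\in\Gamma(E)$ (so $\xi\perp D$). All of this is pointwise linear algebra on the fibers of $E$, using only $\eta(\xi)=1$.

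The second correction produces $F_E$-invariance. Following the manifold case, I would define
\begin{equation}
g_E(s_1,s_2)=\tfrac12\Bigl(h'_E(s_1,s_2)+h'_E\bigl(F_E(s_1),F_E(s_2)\bigr)+\eta(s_1)\eta(s_2)\Bigr),
\label{eq:secondstep}
\end{equation}
and then verify the three things: (a) $g_E$ is symmetric and positive definite, hence a Riemannian metric on $E$; (b) $g_E(s,\xi)=\eta(s)$, which follows from $F_E(\xi)=0$ (Proposition~\ref{pII1}(i)) together with the properties of $h'_E$; and (c) the compatibility identity~\eqref{II3}. For (c) I would compute $g_E(F_E(s_1),F_E(s_2))$ directly from \eqref{eq:secondstep}, using $F_E^3=-F_E$ (Proposition~\ref{pII1}(ii)) and $\eta\circ F_E=0$ (Proposition~\ref{pII1}(iii)) to rewrite $F_E^2(s_i)=-s_i+\eta(s_i)\xi$ via \eqref{II1}, and then collect terms; the $\eta\otimes\eta$ contributions reassemble exactly to give $g_E(s_1,s_2)-\eta(s_1)\eta(s_2)$.

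The main obstacle is not any single hard step but rather making sure the algebra of \eqref{eq:secondstep} closes correctly: one must be careful that the $F_E(\xi)=0$ relation is used to handle the components along $\xi$ and that positive definiteness of $g_E$ genuinely survives averaging (it does, because $h'_E(F_E(s),F_E(s))\ge 0$ and $\eta(s)^2\ge 0$, while $h'_E(s,s)>0$ for $s\neq 0$). Since all computations are fiberwise and $C^\infty(M)$-bilinear, no differential-calculus subtleties of the Lie algebroid (bracket, anchor, $d_E$) enter at all — the proof is purely tensorial, which is why it proceeds "in a similar manner as in the case of almost contact manifolds."
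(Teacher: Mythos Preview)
Your proposal is correct and follows essentially the same two-step averaging construction as the paper. In fact, since $F_E^2(s)=-(s-\eta(s)\xi)$, your intermediate metric $h'_E$ coincides with the paper's $g_E^*(s_1,s_2)=g_E^{**}(F_E^2(s_1),F_E^2(s_2))+\eta(s_1)\eta(s_2)$, and your second step \eqref{eq:secondstep} is verbatim the paper's formula \eqref{II4}.
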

\begin{proof}
We recall that a Riemannian metric in the vector bundle $p:E\rightarrow M$ is a mapping $g_E$ that assigns to every $x\in M$ a scalar product $g_E(x)$ in the local fiber $E_x$ such that for every local sections $s_1,s_2\in\Gamma(E)$, the function $x\mapsto g_E(x)(s_1,s_2)$ is smooth. Since $E$ is paracompact, there exists a Riemannian metric $g_E^{**}$ on $E$ and then, we define $g_E$ by
\begin{equation}
\label{II4}
g_E(s_1,s_2)=\frac{1}{2}\left[g_E^*(F_E(s_1),F_E(s_2))+g_E^*(s_1,s_2)+\eta(s_1)\eta(s_2)\right],
\end{equation}
where $g_E^*$ has the expression $g_E^*(s_1,s_2)=g_E^{**}\left(F_E^2(s_1),F_E^2(s_2)\right)+\eta(s_1)\eta(s_2)$. Then, it is easy to check that $g_E$ given by \eqref{II4} is a Riemannian metric on $E$ and satisfies the condition \eqref{II3}.
\end{proof}
The vector bundle $E$ with the almost contact structure $(F_E,\xi,\eta)$ and the Riemannian metric $g_E$ satisfying the condition \eqref{II3} is called an \textit{almost contact Riemannian vector bundle} or an \textit{almost contact Riemannian Lie algebroid} (when this is the case) and $(F_E,\xi,\eta,g_E)$ is an \textit{almost contact Riemannian structure} on $E$. Sometimes, we say that $g_E$ is a metric \textit{compatible} with the almost contact structure $(F_E,\xi,\eta)$. 

In a similar manner with the case of almost contact manifolds (see \cite{Bl1, Bl}), some elementary but useful properties of such metrics are specified in the following:
\begin{proposition}
\label{pII2}
If $g_E$ is a metric compatible with the almost contact structure $(F_E,\xi,\eta)$ on the vector bundle $E$ of rank $2m+1$ then:
\begin{enumerate}
\item[(i)] $\eta(s)=g_E(s,\xi)$ for all $s\in\Gamma(E)$;
\item[(ii)] on the domain $U$ of each local chart from $M$ there exists an orthonormal basis of local sections of $E$ over $U$, $\{s_1,\ldots,s_n,F_E(s_1),\ldots,F_E(s_n),\xi\}$;
\item[(iii)] $F_E+\eta\otimes \xi$ and $-F_E+\eta\otimes \xi$ are orthogonal transformations with respect to metric $g_E$;
\item[(iv)] $g_E(F_E(s_1),s_2)=-g_E(s_1,F_E(s_2))$ for every $s_1,s_2\in\Gamma(E)$.
\end{enumerate}
\end{proposition}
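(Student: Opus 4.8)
The plan is to obtain all four assertions from the compatibility identity \eqref{II3}, from Proposition \ref{pII1}, and from the defining relations \eqref{II1}, treating (i), (iii), (iv) as short algebraic computations and reserving the genuine work for the frame construction in (ii).

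For (i) I would put $s_2=\xi$ in \eqref{II3}; since $F_E(\xi)=0$ by Proposition \ref{pII1}(i) and $\eta(\xi)=1$, the left-hand side vanishes and what remains is $0=g_E(s,\xi)-\eta(s)$, i.e. $\eta(s)=g_E(s,\xi)$. For (iii), writing $\Phi_{\pm}=\pm F_E+\eta\otimes\xi$ and expanding $g_E(\Phi_{\pm}(s_1),\Phi_{\pm}(s_2))$ by bilinearity yields four terms: the first is $g_E(F_E(s_1),F_E(s_2))$, which by \eqref{II3} equals $g_E(s_1,s_2)-\eta(s_1)\eta(s_2)$; the two mixed terms carry factors $g_E(F_E(s_i),\xi)=\eta(F_E(s_i))=(\eta\circ F_E)(s_i)=0$ using (i) and Proposition \ref{pII1}(iii); and the last term is $\eta(s_1)\eta(s_2)\,g_E(\xi,\xi)=\eta(s_1)\eta(s_2)$ by (i) and $\eta(\xi)=1$. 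The two $\eta\otimes\eta$ contributions cancel, leaving $g_E(\Phi_{\pm}(s_1),\Phi_{\pm}(s_2))=g_E(s_1,s_2)$. For (iv), I would substitute $s_1\mapsto F_E(s_1)$ in \eqref{II3}: the left-hand side becomes $g_E(F_E^2(s_1),F_E(s_2))=g_E(-s_1+\eta(s_1)\xi,F_E(s_2))=-g_E(s_1,F_E(s_2))$, the surviving correction vanishing because $g_E(\xi,F_E(s_2))=(\eta\circ F_E)(s_2)=0$; the right-hand side becomes $g_E(F_E(s_1),s_2)$ since $\eta(F_E(s_1))=0$. Equating gives $g_E(F_E(s_1),s_2)=-g_E(s_1,F_E(s_2))$.

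The substantive item is (ii). First, by (i) the $g_E$-orthogonal complement of $\xi$ in $E$ is exactly the contact subbundle $D=\ker\eta$, a subbundle of rank $2m$; moreover $D$ is $F_E$-invariant (by Proposition \ref{pII1}(iii)), $F_E|_D$ squares to $-I$ on $D$ (as $\eta$ vanishes on $D$), and $g_E|_D$ is $F_E$-compatible by \eqref{II3}. Over the domain $U$ of a chart (shrunk if necessary so that $D|_U$ is trivial) I would run the standard inductive ``$F_E$-basis'' procedure: pick a local section $s_1$ of $D$ with $g_E(s_1,s_1)=1$; then $F_E(s_1)$ is again a unit section, by \eqref{II3}, and is orthogonal to $s_1$ by (iv); replace $D|_U$ by the $g_E$-orthogonal complement of $\mathrm{span}\{s_1,F_E(s_1)\}$ inside it, which is again $F_E$-invariant of rank $2(m-1)$ by (iv) and \eqref{II3}, and repeat $m$ times. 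This produces local sections $s_1,\dots,s_m,F_E(s_1),\dots,F_E(s_m)$ which, together with $\xi$, form a $g_E$-orthonormal frame of $E$ over $U$ (matching the frame asserted in (ii)).

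The only delicate point I anticipate is in (ii): one must check at each stage that the orthogonal complement is a genuine subbundle spanned by smooth local sections and that it is $F_E$-invariant. The invariance is immediate from (iv) and \eqref{II3} — if $s\perp s_1$ and $s\perp F_E(s_1)$ then $g_E(F_E(s),s_1)=-g_E(s,F_E(s_1))=0$ and $g_E(F_E(s),F_E(s_1))=g_E(s,s_1)=0$ — while the smoothness and local triviality are handled by the usual Gram--Schmidt argument on the Riemannian vector bundle $(D,g_E|_D)$, after shrinking $U$ if needed. Everything else reduces to routine bookkeeping with the identities just listed.
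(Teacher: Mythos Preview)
Your proof is correct. The paper itself does not supply a proof of this proposition: it is introduced as ``some elementary but useful properties'' and stated without any argument, so there is nothing to compare against beyond noting that your computations for (i), (iii), (iv) and the inductive $F_E$-basis construction for (ii) are exactly the standard ones from almost contact geometry, transported verbatim to the Lie-algebroid setting. (Incidentally, you have silently corrected a typo in the paper's statement of (ii): the frame should run over $s_1,\dots,s_m$, not $s_1,\dots,s_n$, since ${\rm rank}\,E=2m+1$.)
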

The local basis of sections of $E$, $\{s_1,s_2,\ldots,s_m,s_{1^*}=F_E(s_1),s_{2^*}=F_E(s_2),\ldots,s_{m^*}=F_E(s_m),\xi\}$ obtained above and denoted sometimes by $\{s_a,s_{a^*},\xi\}$, $a=1,\ldots,m$ is called a \textit{$F_E$--basis} for the almost contact Riemaniann vector bundle $(E,F_E,\xi,\eta,g_E)$ (or Lie algebroid, when this is the case). The existence of metrics compatible with an almost contact structure $(F_E,\xi,\eta)$ on $E$ allows us to state the following characterization of almost contact bundles (or Lie algebroids) by means of the structure group of the vector bundle $E$.
\begin{theorem}
\label{tII2}
The structure group of the almost contact vector bundle (or Lie algebroid) $E$ of rank $2m+1$ reduces to $U(m)\times 1$. Conversely, if the structure group of the vector bundle $E$ reduces to $U(m)\times 1$ then $E$ has an almost contact structure.
\end{theorem}
\begin{proof}
The proof follows in the same manner as for almost contact manifolds case (see for instance \cite{Bl1, Pi}). However, we briefly present here its generalization to general vector bundles case. Let $g_E$ be a metric on $E$ compatible with the almost contact structure $(F_E,\xi,\eta)$ and consider two open neighborhoods $U,V$ on $M$ trivializing $E$ with $U\cap V\neq\emptyset$. Also, we denote by $\mathcal{B}_U=\{s_a,s_{a^*},\xi\}$ and $\mathcal{B}_V=\{s^\prime_a,s^\prime_{a^*},\xi\}$ the corresponding $F_E$--bases from Proposition \ref{pII2} (ii). The matrix $(F_E)$ of $F_E$ with respect to these bases is
$$(F_E)=\left( 
\begin{array}{ll}
0\,& -I_m\,\,\,\,\,\,0 \\ 
I_m\, & \,\,\,\,\,\,0\,\,\,\,\,\,\,\,\,0\\
0\, &\,\,\,\,\,\,0\,\,\,\,\,\,\,\,\,0
\end{array}
\right).$$ 
For $x\in U\cap V$ and $s_x\in E_x$ we denote by $(s^{U}_x)$, $(s^V_x)$ the column matrices of components of the section $s_x$ with respect to $\mathcal{B}_U$ and $\mathcal{B}_V$, respectively. Then $(s^V_x)=P\cdot(s^{U}_x)$, where
$$P=\left( 
\begin{array}{ll}
A\,& B\,\,\,\,\,\,\,\,0 \\ 
C\, & D\,\,\,\,\,\,\,\,0\\
0\, &\,0\,\,\,\,\,\,\,\,\,1
\end{array}
\right)$$ 
and $A,B,C,D\in\mathcal{M}_{m\times m}(\mathbb{R})$. But $P$ is orthogonal and commutes with the matrix $(F_E)$ (see Proposition \ref{pII2} (ii)), thus we have $D=A$, $C=-B$ and this proves that $P\in U(m)\times 1$.

Conversely, if the structure group of the vector bundle $E$ reduces to $U(m)\times 1$ then there exists a covering $\{U_\alpha\}_{\alpha\in I}$ of $M$, for which we can choose the orthonormal local bases of sections of $E$ with the property that on the intersection $U_\alpha\cap U_\beta\neq\emptyset$ these are transformed by the action of the group $U(m)\times 1$. With respect to such bases we can define the endomorphism $F_E|_\alpha:\Gamma(E|_{U_\alpha})\rightarrow\Gamma(E|_{U_\alpha})$ by the matrix $(F_E)$. But $(F_E)$ commutes with $U(m)\times 1$, hence $\{F_E|_\alpha\}_{\alpha\in I}$ determine a global endomorphism $F_E:\Gamma(E)\rightarrow\Gamma(E)$. In a similar way the sections $\xi\in\Gamma(E)$ and $\eta\in\Omega^1(E)$ are globally defined by the matrices of their components with respect to each open set $U_\alpha$, namely
\begin{displaymath}
\xi:(0,\ldots,0,1)^t\,,\,\eta:(0,\ldots,0,1).
\end{displaymath}
Finally, the fact that $(F_E,\xi,\eta)$ is an almost contact structure on $E$ is straighforward.
\end{proof}

Also, we notice that the determinants of the matrices from the proof of Theorem \ref{tII2} are positive which yields
\begin{corollary}
\label{c1}
Any almost contact bundle (or Lie algebroid) is orientable. 
\end{corollary}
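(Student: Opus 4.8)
The goal is to show that an almost contact Lie algebroid $(E,\rho_E,[\cdot,\cdot]_E,F_E,\xi,\eta)$ of rank $2m+1$ is orientable, i.e. that the vector bundle $E\to M$ is orientable. The plan is to exploit the reduction of the structure group proved in Theorem \ref{tII2}: since $E$ admits an almost contact structure, its structure group reduces to $U(m)\times 1$. It therefore suffices to observe that every element of $U(m)\times 1$, viewed as a real matrix in $GL(2m+1,\mathbb{R})$ via the standard inclusion $U(m)\hookrightarrow GL(2m,\mathbb{R})$ together with the trivial $1\times 1$ block, has positive determinant; equivalently, $U(m)\times 1\subseteq GL^+(2m+1,\mathbb{R})$.

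The key step, and the only point requiring a short argument, is the determinant computation. First I would recall that a complex matrix $U\in U(m)$, regarded as a real $2m\times 2m$ matrix $U_{\mathbb{R}}$, satisfies $\det_{\mathbb{R}}(U_{\mathbb{R}})=|\det_{\mathbb{C}}(U)|^2>0$ (indeed $\det_{\mathbb{R}}(U_{\mathbb{R}})=\det_{\mathbb{C}}(U)\overline{\det_{\mathbb{C}}(U)}$, and for a unitary matrix $|\det_{\mathbb{C}}(U)|=1$, so this equals $1$). Concretely, in the excerpt this is exactly the statement that the transition matrices $P$ of the form displayed in the proof of Theorem \ref{tII2}, which lie in $U(m)\times 1$ and hence have block form with $D=A$, $C=-B$ and a trailing $1$, satisfy $\det P>0$ — this is the sentence "The determinants of the matrices from the proof of Theorem \ref{tII2} are positive." Then, choosing an open cover $\{U_\alpha\}$ of $M$ with $E|_{U_\alpha}$ trivialized by $F_E$-bases as in Proposition \ref{pII2}(ii), all transition functions $g_{\alpha\beta}:U_\alpha\cap U_\beta\to U(m)\times 1$ take values in $GL^+(2m+1,\mathbb{R})$, which is precisely the definition of an orientable vector bundle.

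Finally, for completeness one can phrase orientability intrinsically: the top exterior power $\Lambda^{2m+1}E^*$ is a trivial real line bundle, because the local volume forms $\omega_\alpha=s^1_\alpha\wedge\cdots\wedge s^m_\alpha\wedge F_E(s^1_\alpha)\wedge\cdots\wedge F_E(s^m_\alpha)\wedge\eta$ built from the $F_E$-bases of Proposition \ref{pII2}(ii) transform on overlaps by the positive scalars $\det g_{\alpha\beta}$, hence can be patched with a partition of unity into a nowhere-vanishing global section of $\Lambda^{2m+1}E^*$; equivalently $\eta\wedge(d_E\eta)^m$ or the metric volume form associated with any compatible $g_E$ gives such a global section directly. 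I do not anticipate a genuine obstacle here: the content is entirely contained in Theorem \ref{tII2}, and the corollary is the standard observation that a $U(m)$-structure (in any ambient rank, with trivial complementary summand) forces orientability, just as every almost complex manifold is orientable.
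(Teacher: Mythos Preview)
Your proposal is correct and follows essentially the same approach as the paper: the paper's entire argument is the single sentence ``The determinants of the matrices from the proof of Theorem \ref{tII2} are positive,'' and you have simply spelled out why (via $\det_{\mathbb{R}}(U_{\mathbb{R}})=|\det_{\mathbb{C}}(U)|^2$) and what it means (transition functions land in $GL^+(2m+1,\mathbb{R})$).

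One small inaccuracy in your final paragraph: for a general almost contact structure the form $\eta\wedge(d_E\eta)^m$ need not be nowhere vanishing (that is the \emph{contact} condition of Theorem \ref{tII3}(ii)); the correct global volume form in the almost contact Riemannian setting is $\eta\wedge\Omega_E^m$ built from the fundamental $2$-form, or the metric volume $dV_{g_E}$. This does not affect your main argument.
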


From Proposition \ref{pII2} (iv) it follows that $\Omega_E$ defined by $\Omega_E(s_1,s_2)=g_E(s_1,F_E(s_2))$ for all $s_1,s_2\in\Gamma(E)$ is a $2$--form on $E$. It is called the \textit{fundamental $2$--form} or the \textit{Sasaki $2$--form} of the almost contact Riemaniann vector bundle (or Lie algebroid) $(E,F_E,\xi,\eta,g_E)$. Moreover, it is easy to see that $\Omega_E$ has the following obvious properties:
\begin{equation}
\label{II5}
\Omega_E(s_1,F_E(s_2))=-\Omega_E(F_E(s_1),s_2)\,\,\,{\rm and}\,\,\,\Omega_E(F_E(s_1),F_E(s_2))=\Omega_E(s_1,s_2).
\end{equation}
If  $\{e^{a},e^{a^*},\eta\}$ is the dual basis of the $F_E$--basis from Proposition \ref{pII2} then the fundamental $2$--form $\Omega_E$ is locally given by
\begin{displaymath}
\Omega_E=-2\sum_{a=1}^me^{a}\wedge e^{a^*}.
\end{displaymath}
We remark that ${\rm rank}\,\Omega_E=2m$ and then $\eta\wedge\Omega^m_E$ (where $\Omega_E^m$ is the exterior product of $m$ copies of $\Omega_E$) vanish nowhere on $M$. The converse of this result is also true, namely we have
\begin{theorem}
\label{tII3.1}
Let $E$ be a vector bundle over $M$ of ${\rm rank}\,E=2m+1$ and $\eta\in\Omega^1(E)$. If there exists $\Omega_E\in\Omega^2(E)$ such that $\eta\wedge\Omega_E^m\neq 0$ at each point of $M$ then $E$ has an almost contact structure.
\end{theorem}
\begin{proof}
Follows as in the case of almost contact manifolds (see for instance \cite{Bl1, Pi}).
\end{proof}
Moreover, in the case of Lie algebroids, we have 
\begin{theorem}
\label{tII3.2}
Let $(E,\rho_E,[\cdot,\cdot]_E)$ be a Lie algebroid of ${\rm rank}\,E=2m+1$ and $\eta\in\Omega^1(E)$. If $\eta\wedge(d_E\eta)^m\neq 0$ on $M$ then the Lie algebroid $(E,\rho_E,[\cdot,\cdot]_E)$ has an almost contact Riemannian structure $(F_E,\xi,\eta,g_E)$ whose fundamental form is $d_E\eta$ and the Reeb section $\xi$ is completely determined by the conditions $\eta(\xi)=1$ and $\imath_\xi(d_E\eta)=0$.
\end{theorem}

\section{Normal almost contact structures on Lie algebroids}
\setcounter{equation}{0}
In this section we define  normal almost contact structures on Lie algebroids and we characterize these structures. Also, the direct product between an almost Hermitian Lie algebroid with an almost contact Riemannian Lie algebroid or the direct product of two almost contact Riemannian Lie algebroids are investigated.

We recall that for a general tensor $A\in\Gamma(E\otimes E^*)$ of type $(1,1)$ on $E$, the Nijenhuis tensor of $A$ is a tensor $N_A\in\Gamma(\otimes^2E^*\otimes E)$ given by
\begin{displaymath}
N_A(s_1,s_2)=[A(s_1),A(s_2)]_E-A([A(s_1),s_2]_E)-A([s_1,A(s_2)]_E)+A^2([s_1,s_2]_E).
\end{displaymath}
As usual, we say that an almost contact structure $(F_E,\xi,\eta)$ on a Lie algebroid $(E,\rho_E,[\cdot,\cdot]_E)$ of rank $2m+1$ is \textit{normal} if 
\begin{equation}
\label{III1}
N_E^{(1)}\equiv N_{F_E}+2d_E\eta\otimes \xi=0.
\end{equation}
Other useful tensors on $E$ are the following:
\begin{equation}
\label{III2}
N_E^{(2)}(s_1,s_2)\equiv \left(\mathcal{L}_{F_E(s_1)}\eta\right)(s_2)-\left(\mathcal{L}_{F_E(s_2)}\eta\right)(s_1)\,,\,N_E^{(3)}(s)\equiv\frac{1}{2}\left(\mathcal{L}_{\xi}F_E\right)(s)\,,\,N_E^{(4)}(s)\equiv\left(\mathcal{L}_{\xi}\eta\right)(s).
\end{equation}
Using the differential calculus on Lie algebroids (exterior differential and Lie derivative),  we can easily prove that if the almost contact structure $(F_E,\xi,\eta)$ is normal then $N_E^{(2)}=N_E^{(3)}=N_E^{(4)}=0$.

Replacing in the definition of the Nijenhuis tensor $N_{F_E}$ the brackets by their expressions (since the Levi-Civita connection $\nabla$ on Riemannian Lie algebroids is torsionless, see \cite{Bo}), similarly to the almost contact Riemannian manifolds (see \cite{Ta65}), we obtain
\begin{proposition}
An almost contact Riemannian structure $(F_E,\xi,\eta,g_E)$ on a Lie algebroid $(E,\rho_E,[\cdot,\cdot]_E)$ is normal if and only if one of the following conditions is satisfied:
\begin{equation}
\label{III3}
F_E\left(\nabla_{s_1}F_E\right)s_2-\left(\nabla_{F_E(s_1)}F_E\right)s_2-\left[\left(\nabla_{s_1}\eta\right)s_2\right]\xi=0,
\end{equation}
\begin{equation}
\label{III4}
\left(\nabla_{s_1}F_E\right)s_2-\left(\nabla_{F_E(s_1)}F_E\right)F_E(s_2)+\eta(s_2)\nabla_{F_E(s_1)}\xi=0,
\end{equation}
for every $s_1,s_2\in\Gamma(E)$.
\end{proposition}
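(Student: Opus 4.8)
The plan is to reduce everything to a computation with the Levi--Civita connection $\nabla$ of $g_E$, exactly as one does for almost contact \emph{manifolds} (this is the content of the parenthetical ``similarly to \cite{Ta65}''). First I would rewrite the Nijenhuis tensor $N_{F_E}(s_1,s_2)$ using the torsion-free identity $[s,s']_E=\nabla_s s'-\nabla_{s'}s$, valid for the Levi--Civita connection on a Riemannian Lie algebroid (see \cite{Bo}). Substituting into
\[
N_{F_E}(s_1,s_2)=[F_E s_1,F_E s_2]_E-F_E[F_E s_1,s_2]_E-F_E[s_1,F_E s_2]_E+F_E^2[s_1,s_2]_E
\]
and collecting terms, the four double-bracket expressions turn into covariant derivatives of $F_E$; the standard rearrangement (using $\nabla_{F_E s}(F_E s')=(\nabla_{F_E s}F_E)s'+F_E(\nabla_{F_E s}s')$ and similarly for the other terms, then cancelling the $F_E(\nabla_\bullet\bullet)$ contributions) yields
\[
N_{F_E}(s_1,s_2)=(\nabla_{F_E s_1}F_E)s_2-(\nabla_{F_E s_2}F_E)s_1-F_E\big((\nabla_{s_1}F_E)s_2\big)+F_E\big((\nabla_{s_2}F_E)s_1\big),
\]
which is the Lie-algebroid analogue of the classical formula. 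This identity, together with the normality equation $N_E^{(1)}=N_{F_E}+2d_E\eta\otimes\xi=0$, is the starting point.

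Next I would bring in $d_E\eta$. Using Proposition \ref{pII2}(i), $\eta(s)=g_E(s,\xi)$, and the Koszul-type formula for $d_E$ on a $1$-form,
\[
d_E\eta(s_1,s_2)=\tfrac12\big\{\rho_E(s_1)\eta(s_2)-\rho_E(s_2)\eta(s_1)-\eta([s_1,s_2]_E)\big\},
\]
one rewrites $d_E\eta(s_1,s_2)$ in terms of $\nabla$ and $\nabla\xi$; since $\nabla$ is metric, $d_E\eta(s_1,s_2)=\tfrac12\{g_E(\nabla_{s_1}\xi,s_2)-g_E(\nabla_{s_2}\xi,s_1)\}$. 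Then the condition $N_E^{(1)}=0$ becomes a purely tensorial relation among $\nabla F_E$, $\nabla\eta$ and $\nabla\xi$. To extract \eqref{III3}, the trick (as in \cite{Ta65}) is to apply $F_E$ to the whole normality identity and use $F_E^2=-I_E+\eta\otimes\xi$, $F_E\xi=0$, $\eta\circ F_E=0$ from Proposition \ref{pII1}, together with the differentiated forms of the structure equations $F_E^2=-I_E+\eta\otimes\xi$ and $\eta(\xi)=1$ (i.e.\ $(\nabla_s F_E)\circ F_E+F_E\circ(\nabla_s F_E)=(\nabla_s\eta)\otimes\xi+\eta\otimes\nabla_s\xi$ and $(\nabla_s\eta)\xi+\eta(\nabla_s\xi)=0$). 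Unwinding these relations, the mixed $N_{F_E}$ expression collapses to the single-variable-composed form $F_E(\nabla_{s_1}F_E)s_2-(\nabla_{F_E s_1}F_E)s_2-[(\nabla_{s_1}\eta)s_2]\xi$, which is \eqref{III3}.

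Finally, for the equivalence $\eqref{III3}\Leftrightarrow\eqref{III4}$ I would argue algebraically rather than going back to $N_{F_E}$: apply $F_E$ to \eqref{III3}, use $F_E^2=-I_E+\eta\otimes\xi$ and the differentiated structure equations above to convert $F_E(\nabla_{F_E s_1}F_E)s_2$ and $F_E^2(\nabla_{s_1}F_E)s_2$ into the terms appearing in \eqref{III4}, noting that $\eta(\nabla_{F_E s_1}\xi)=0$ (from $\eta(\xi)=1$ and $\eta\circ F_E=0$), so that the $\eta(s_2)\nabla_{F_E(s_1)}\xi$ term emerges; conversely, applying $F_E$ to \eqref{III4} and using the same identities recovers \eqref{III3}. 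One has to check separately that both \eqref{III3} and \eqref{III4} are automatically satisfied when one of the arguments is $\xi$ (both sides vanish identically there, using $F_E\xi=0$ and $\nabla_\xi\xi=0$ which follows from $g_E(\xi,\xi)=1$ and the vanishing of $N_E^{(3)},N_E^{(4)}$), so it suffices to verify them on the contact subbundle $D=\ker\eta$, which is where the contraction arguments are cleanest. The main obstacle is bookkeeping: keeping the $F_E$-images, the $\xi$-components, and the $\nabla\xi$-terms correctly separated while reducing the four-term Nijenhuis expression to the two stated one-line identities — the Lie-algebroid calculus itself introduces nothing new beyond what \cite{Ta65} does on manifolds, since $\rho_E$ intertwines $[\cdot,\cdot]_E$ with $[\cdot,\cdot]$ and $d_E$ obeys the usual Cartan identities.
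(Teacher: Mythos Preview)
Your proposal is correct and follows exactly the route the paper indicates: the paper's entire ``proof'' is the sentence preceding the proposition, namely to replace the Lie brackets in $N_{F_E}$ by $\nabla_s s'-\nabla_{s'}s$ (torsion-freeness of the Levi--Civita connection on the Lie algebroid) and then repeat Tanno's classical computation \cite{Ta65}, which is precisely what you outline. One small caution: in your final paragraph you invoke $\nabla_\xi\xi=0$ via the vanishing of $N_E^{(3)},N_E^{(4)}$, but those vanish only \emph{after} normality is assumed, so in the converse direction (\eqref{III3} or \eqref{III4} $\Rightarrow$ normal) you should avoid that shortcut and either check the $\xi$-cases directly from the identities or carry out the full computation without splitting off $D$ and $\langle\xi\rangle$.
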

Since the eigenvalues of $F_E|_D$ are $i$ and $-i$, we deduce that the complexified $D_{\mathbb{C}}=D\otimes_{\mathbb{R}}\mathbb{C}$ of $D$ has the decomposition
\begin{equation}
\label{III5}
D_{\mathbb{C}}=D^{1,0}\oplus D^{0,1},
\end{equation}
where $D^{1,0}$ and $D^{0,1}$ are the eigensubbundles corresponding to $i$ and $-i$, respectively. A simple argument shows that
\begin{displaymath}
D^{1,0}=\{s-iF_E(s)\,|\,s\in\Gamma(D)\}\,,\,D^{0,1}=\{s+iF_E(s)\,|\,s\in\Gamma(D)\}
\end{displaymath}
and extending to $E_{\mathbb{C}}$ the metric $g_E$ by
\begin{displaymath}
g_E^c(s_1+is_2,s)=g_E(s_1,s)+ig_E(s_2,s)\,,\,g_E^c(s,s_1+is_2)=g_E(s,s_1)-ig_E(s,s_2)
\end{displaymath}
we obtain a Hermitian metric $g_E^c$ on $E_{\mathbb{C}}$. From Proposition \ref{pII2} (iv), we deduce that with respect to this metric the decomposition \eqref{III5} is orthogonal and to this one the following orthogonal decomposition of the complexified vector bundle $E_{\mathbb{C}}$ is associated
\begin{equation}
\label{III6}
E_{\mathbb{C}}=D_{\mathbb{C}}\oplus\langle \xi\rangle_{\mathbb{C}}=D^{1,0}\oplus D^{0,1}\oplus\langle \xi\rangle_{\mathbb{C}},
\end{equation}
where $\langle \xi\rangle_{\mathbb{C}}=\langle \xi\rangle\otimes_{\mathbb{R}}{\mathbb{C}}$.

On the other hand, $(E_{\mathbb{C}},g_E^c)$ is a Hermitian vector bundle over $M$ and the natural extension $\nabla^c$ of the Levi-Civita connection $\nabla$ from $E$ is a Hermitian connection in this bundle (see \cite{I-Po}). Moreover, $(D_{\mathbb{C}},g^c_E|_{D_{\mathbb{C}}})$ is a Hermitian subbundle of $(E_{\mathbb{C}},g^c_{E})$, with the Hermitian connection $\nabla^{D_{\mathbb{C}}}$ induced by the following decomposition
\begin{equation}
\label{III7}
\nabla^cs=\nabla^{D_{\mathbb{C}}}s+A^{D_{\mathbb{C}}}s,
\end{equation}
where $s\in\Gamma(D_{\mathbb{C}})$, $\nabla^{D_{\mathbb{C}}}s\in L(E_{\mathbb{C}},D_{\mathbb{C}})$ and $A^{D_{\mathbb{C}}}s\in L(E_{\mathbb{C}},\langle \xi\rangle_{\mathbb{C}})$. A simple calculation shows that
\begin{displaymath}
A^{D_{\mathbb{C}}}_{s}s^\prime=-\Omega_E(s,s^\prime)\xi\,\,,\,\,\,\nabla^{D_{\mathbb{C}}}F_E|_{D_{\mathbb{C}}}=0,
\end{displaymath}
hence $\nabla^{D_{\mathbb{C}}}$ is an almost complex connection, \cite{I-Po}, in the complex bundle $D_{\mathbb{C}}$.

Let $g_E^{1,0}$ be the restriction of the metric $g^c_E|_{D_{\mathbb{C}}}$ to $D^{1,0}$. Following the same argument as above we deduce that $(D^{1,0},g^{1,0}_E)$ is a Hermitian subbundle of $(D_{\mathbb{C}},g^c_E|_{D_{\mathbb{C}}})$, with Hermitian connection $\nabla^{1,0}$ induced by the following decomposition
\begin{equation}
\label{III8}
\nabla^{D_{\mathbb{C}}}s=\nabla^{1,0}s+A^{1,0}s,
\end{equation}
where $s\in\Gamma(D^{1,0})$, $\nabla^{1,0}s\in L(D_{\mathbb{C}},D^{1,0})$ and $A^{1,0}s\in L(D_{\mathbb{C}},D^{0,1})$.

The direct product of two Lie algebroids $\left(E_1,\rho_{E_1},[\cdot,\cdot]_{E_1}\right)$ over $M_1$ and $\left(E_2,\rho_{E_2},[\cdot,\cdot]_{E_2}\right)$ over $M_2$ is defined in \cite{Mack2} pg. 155, as a Lie algebroid structure on $E_1\times E_2\rightarrow M_1\times M_2$. The general sections of $E_1\times E_2$ are of the form $s=\sum (f_i\otimes s_i^1)\oplus\sum(g_j\otimes s_j^2)$, where $f_i,g_j\in C^\infty(M_1\times M_2)$, $s_i^1\in\Gamma(E_1)$, $s_j^2\in\Gamma(E_2)$, the anchor map is defined by
\begin{displaymath}
\rho_E\left(\sum (f_i\otimes s_i^1)\oplus\sum(g_j\otimes s_j^2)\right)=\sum(f_i\otimes\rho_{E_1}(s_i^1))\oplus\sum(g_j\otimes\rho_{E_2}(s_j^2)),
\end{displaymath}
and the Lie bracket on $E=E_1\times E_2$ is:
\begin{eqnarray*}
[s,s^\prime]_E&=&\left(\sum f_if^\prime_k\otimes[s_i^1,s_k^{\prime 1}]_{E_1}+\sum f_i\rho_{E_1}(s_i^1)(f_k^\prime)\otimes s_k^{\prime 1}-\sum f_k^\prime\rho_{E_1}(s_k^{\prime 1})(f_i)\otimes s_i^1\right)\\
&&\oplus\left(\sum g_jg^\prime_l\otimes[s_j^2,s_l^{\prime 2}]_{E_2}+\sum g_j\rho_{E_2}(s_j^2)(g_l^\prime)\otimes s_l^{\prime 2}-\sum g_l^\prime\rho_{E_2}(s_l^{\prime 2})(g_j)\otimes s_j^2\right)
\end{eqnarray*}
for every $s=\sum (f_i\otimes s_i^1)\oplus\sum(g_j\otimes s_j^2)$ and $s^{\prime}=\sum (f_k^{\prime}\otimes s_k^{\prime 1})\oplus\sum(g^{\prime}_l\otimes s_l^{\prime 2})$ in $\Gamma(E)$.

Now, by direct verification and using a simple calculation we can prove the following two results concerning the direct product of Lie algebroids.
\begin{proposition}
\label{pIII2}
Let us consider two Lie algebroids $\left(E_1,\rho_{E_1},[\cdot,\cdot]_{E_1}\right)$ over $M_1$ of rank $2m_1$ equipped with an almost Hermitian structure $(J_{E_1},g_{E_1})$, \cite{I-Po}, and $\left(E_2,\rho_{E_2},[\cdot,\cdot]_{E_2}\right)$ over $M_2$ of rank $2m_2+1$ equipped with an almost contact Riemannian structure $(F_{E_2},\xi_2,\eta_2,g_{E_2})$. Then the tensors $F_E$, $\xi$, $\eta$, $g_E$, given by
\begin{displaymath}
F_E\left(\sum(f_i\otimes s_i^1)\oplus\sum(g_j\otimes s_j^2)\right)=\sum(f_i\otimes J_{E_1}(s_i^1))\oplus\sum(g_j\otimes F_{E_2}(s_j^2)),
\end{displaymath}
\begin{displaymath}
\eta\left(\sum(f_i\otimes s_i^1)\oplus\sum(g_j\otimes s_j^2)\right)=\sum(g_j\otimes\eta_2(s_j^2))\,,\,\xi=0\oplus \xi_2,
\end{displaymath}
and 
\begin{displaymath}
g_E\left(\left(\sum(f_i\otimes s_i^1)\oplus\sum(g_j\otimes s_j^2)\right),\sum (f_k^{\prime}\otimes s_k^{\prime 1})\oplus\sum(g^{\prime}_l\otimes s_l^{\prime 2})\right)
\end{displaymath}
\begin{displaymath}
=\sum f_if_k^\prime\otimes g_{E_1}(s_i^1,s_k^{\prime 1})\oplus\sum g_jg_l^{\prime}\otimes g_{E_2}(s_j^2,s_{l}^{\prime 2})
\end{displaymath}
define an almost contact Riemannian structure on the direct product Lie algebroid $E=E_1\times E_2$.
\end{proposition}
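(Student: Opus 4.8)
The plan is to verify directly, from the explicit formulas for $F_E$, $\xi$, $\eta$, $g_E$ on $E=E_1\times E_2$, the defining identities \eqref{II1} of an almost contact structure and the compatibility \eqref{II3} with $g_E$. The whole argument is a computation, so the strategy is to reduce everything to a single "split" section $s=(f\otimes s^1)\oplus(g\otimes s^2)$ with $f,g\in C^\infty(M_1\times M_2)$, $s^1\in\Gamma(E_1)$, $s^2\in\Gamma(E_2)$ — since both $F_E$ and the bilinear form $g_E$ are defined so as to act componentwise and $C^\infty(M_1\times M_2)$-bilinearly, and both structural identities are tensorial (pointwise) in the sections, it suffices to check them on such generators and then extend by $C^\infty(M_1\times M_2)$-linearity.

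First I would compute $F_E^2$ on a split section: $F_E^2(s)=(f\otimes J_{E_1}^2(s^1))\oplus(g\otimes F_{E_2}^2(s^2))$. Using that $J_{E_1}^2=-I_{E_1}$ (almost Hermitian, hence almost complex) and that $F_{E_2}^2=-I_{E_2}+\eta_2\otimes\xi_2$ with $\eta_2(\xi_2)=1$, this equals $-(f\otimes s^1)\oplus\bigl(-(g\otimes s^2)+(g\otimes\eta_2(s^2))\,\xi_2\bigr)$. Comparing with $-I_E(s)+(\eta\otimes\xi)(s)$, where $\eta(s)=g\otimes\eta_2(s^2)$ and $\xi=0\oplus\xi_2$, gives exactly $F_E^2=-I_E+\eta\otimes\xi$. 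The relation $\eta(\xi)=1$ is immediate: $\xi=0\oplus\xi_2$ so $\eta(\xi)=\eta_2(\xi_2)=1$. This establishes that $(F_E,\xi,\eta)$ is an almost contact structure on $E=E_1\times E_2$, which by definition is a Lie algebroid of rank $2m_1+(2m_2+1)=2(m_1+m_2)+1$, the correct odd rank.

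Next I would check \eqref{II3}. On two split sections $s=(f\otimes s^1)\oplus(g\otimes s^2)$ and $s'=(f'\otimes s'^1)\oplus(g'\otimes s'^2)$ one has, from the definition of $g_E$ and of $F_E$,
\begin{displaymath}
g_E(F_E(s),F_E(s'))=ff'\otimes g_{E_1}(J_{E_1}s^1,J_{E_1}s'^1)\oplus gg'\otimes g_{E_2}(F_{E_2}s^2,F_{E_2}s'^2).
\end{displaymath}
Now invoke that $g_{E_1}$ is Hermitian with respect to $J_{E_1}$, so $g_{E_1}(J_{E_1}s^1,J_{E_1}s'^1)=g_{E_1}(s^1,s'^1)$, and that $g_{E_2}$ is compatible with $(F_{E_2},\xi_2,\eta_2)$ by Theorem \ref{tII1}, so $g_{E_2}(F_{E_2}s^2,F_{E_2}s'^2)=g_{E_2}(s^2,s'^2)-\eta_2(s^2)\eta_2(s'^2)$. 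Substituting, the right-hand side becomes $g_E(s,s')-\bigl(g\otimes\eta_2(s^2)\bigr)\bigl(g'\otimes\eta_2(s'^2)\bigr)=g_E(s,s')-\eta(s)\eta(s')$, which is \eqref{II3}. One should also note in passing that $g_E$ is indeed a Riemannian metric on $E_1\times E_2$ — positive-definiteness is inherited blockwise from $g_{E_1}$ and $g_{E_2}$ since the two summands are $g_E$-orthogonal by construction — so $(F_E,\xi,\eta,g_E)$ is a genuine almost contact Riemannian structure.

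There is no serious obstacle here; the only point requiring a little care is the bookkeeping of the "tensor over $C^\infty(M_1\times M_2)$" notation in the direct-product formalism, namely confirming that $F_E$ and $g_E$ as written are well defined on all of $\Gamma(E_1\times E_2)$ (not just on decomposable sections) and that the identities, being $C^\infty(M_1\times M_2)$-multilinear and antisymmetry-free pointwise relations, propagate from generators to arbitrary sections. Once that reduction is granted, the verification is the three short computations above, each using precisely one property of the factor structures: $J_{E_1}^2=-I$, $F_{E_2}^2=-I+\eta_2\otimes\xi_2$, and the two metric-compatibility relations.
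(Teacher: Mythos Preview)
Your proposal is correct and follows exactly the approach indicated in the paper, which simply states that the result is obtained ``by direct verification and using a simple calculation'' without writing out the details. Your componentwise check of \eqref{II1} and \eqref{II3} on split sections, together with the tensoriality remark, is precisely what that phrase encodes.
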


\begin{proposition}
\label{pIII3}
Let us consider two Lie algebroids $\left(E_1,\rho_{E_1},[\cdot,\cdot]_{E_1}\right)$ over $M_1$ of rank $2m_1+1$ equipped with an almost contact Riemannian structure $(F_{E_1},\xi_1,\eta_1,g_{E_1})$ and $\left(E_2,\rho_{E_2},[\cdot,\cdot]_{E_2}\right)$ over $M_2$ of rank $2m_2+1$ equipped with an almost contact Riemannian structure $(F_{E_2},\xi_2,\eta_2,g_{E_2})$. Then the tensor $F_E$ given by
\begin{displaymath}
F_E\left(\sum(f_i\otimes s_i^1)\oplus\sum(g_j\otimes s_j^2)\right)=\sum(f_i\otimes F_{E_1}(s_i^1)-g_j\otimes\eta_2(s_j^2)\xi_1)\oplus\sum(g_j\otimes F_{E_2}(s_j^2)+f_i\otimes\eta_1(s_i^1)\xi_2),
\end{displaymath}
defines an almost Hermitian structure on the direct product Lie algebroid $E=E_1\times E_2$, with the metric $g_E$ from Proposition \ref{pIII2}. This structure is Hermitian (that is $N_{F_E}=0$) if and only if both almost contact Riemannian structures are normal.
\end{proposition}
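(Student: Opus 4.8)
The plan is to check first that $(F_E,g_E)$ is an almost Hermitian structure on the direct product Lie algebroid $E=E_1\times E_2$, and then to carry out the Nijenhuis computation that separates the Hermitian case. Since $F_E$ and $g_E$ are tensorial it suffices to work with decomposable sections $s=1\otimes s^1\oplus 1\otimes s^2$, $s'=1\otimes t^1\oplus 1\otimes t^2$. For the almost complex part I would compute $F_E^2(s)$ directly from the defining formula: using $F_{E_i}^2=-I_{E_i}+\eta_i\otimes\xi_i$, $\eta_i(\xi_i)=1$, together with $F_{E_i}(\xi_i)=0$ and $\eta_i\circ F_{E_i}=0$ from Proposition \ref{pII1}, the correction terms along $\xi_1$ and $\xi_2$ cancel in pairs, leaving $F_E^2=-I_E$. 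For the compatibility I would compute $g_E(F_E(s),F_E(s'))$: the $E_1$-summand gives $g_{E_1}(F_{E_1}s^1,F_{E_1}t^1)+\eta_2(s^2)\eta_2(t^2)$ and the $E_2$-summand gives $g_{E_2}(F_{E_2}s^2,F_{E_2}t^2)+\eta_1(s^1)\eta_1(t^1)$, the cross terms disappearing because $g_{E_i}(F_{E_i}(\cdot),\xi_i)=\eta_i(F_{E_i}(\cdot))=0$ by Proposition \ref{pII2}(i) and Proposition \ref{pII1}(iii); applying \eqref{II3} to each factor and adding then yields $g_E(F_E(s),F_E(s'))=g_E(s,s')$. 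Thus $(F_E,g_E)$ is almost Hermitian on the direct product.

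For the integrability statement, since $N_{F_E}$ is a tensor it is enough to evaluate it on a local frame of $E$ consisting of decomposable sections $(s^1,0)$ and $(0,s^2)$, with $s^1,s^2$ ranging over local frames of $E_1$ and $E_2$. On such sections the product bracket reduces to the factor brackets: the anchor of $(s^1,0)$ is tangent to the $M_1$-directions and hence annihilates functions pulled back from $M_2$, and symmetrically, so the coefficient-derivative terms of the bracket formula drop out and $[(s^1,0),(0,s^2)]_E=0$. I would then treat the three cases. In the two pure cases (both arguments from the same factor) the output should reproduce, along the $\xi_i$-direction, the normality tensor $N^{(1)}_{E_i}=N_{F_{E_i}}+2\,d_{E_i}\eta_i\otimes\xi_i$ of \eqref{III1}, together with a component along the other Reeb section governed by $N^{(2)}_{E_i}$; in the mixed case what remains is expressed through $N^{(3)}_{E_i}$ and $N^{(4)}_{E_i}$ of \eqref{III2}. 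Since, as noted after \eqref{III2}, normality of an almost contact structure already forces $N^{(2)}=N^{(3)}=N^{(4)}=0$, if both factors are normal then every block of $N_{F_E}$ vanishes and $N_{F_E}=0$; conversely, if $N_{F_E}=0$, then the blocks obtained in the three cases, combined with the identities among the four tensors on each factor, force $N^{(1)}_{E_1}=0$ and $N^{(1)}_{E_2}=0$, i.e.\ both structures are normal.

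The hard part is the bookkeeping in the Nijenhuis computation, above all the mixed case and the precise identification of which combinations of $N_{F_{E_i}}$, $d_{E_i}\eta_i$, $N^{(2)}_{E_i}$, $N^{(3)}_{E_i}$, $N^{(4)}_{E_i}$ occur in each block; one must also keep careful track that $N_{F_E}$ is being evaluated on genuinely decomposable pulled-back sections, which is exactly what legitimizes discarding the coefficient-derivative terms of the product bracket. A possibly cleaner route is to use the Levi-Civita characterizations \eqref{III3}--\eqref{III4} of normality: the Levi-Civita connection of $g_E$ on $E_1\times E_2$ splits, on pulled-back sections, as the sum of the two factor connections, so that the Hermitian condition $N_{F_E}=0$ decouples into \eqref{III3} for $(F_{E_1},\xi_1,\eta_1,g_{E_1})$ and for $(F_{E_2},\xi_2,\eta_2,g_{E_2})$ separately.
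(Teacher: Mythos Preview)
Your proposal is correct and follows the same approach as the paper: the paper offers no proof beyond the blanket remark ``by direct verification and using a simple calculation we can prove the following two results,'' and what you outline is precisely that direct verification (the Morimoto-type splitting of $N_{F_E}$ into blocks governed by $N^{(1)}_{E_i},N^{(2)}_{E_i},N^{(3)}_{E_i},N^{(4)}_{E_i}$). Your cautions about the bookkeeping---especially that $F_E$ of a pulled-back section has a coefficient $\eta_i(s^i)$ depending on $M_i$, so the derivative terms in the product bracket really do contribute and are what produce the $d_{E_i}\eta_i$ pieces---are well placed and in fact go beyond anything the paper spells out.
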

\begin{remark}
Let $(E,F_E,\xi,\eta)$ be an almost contact Lie algebroid of rank $2m+1$ over a smooth manifold $M$ and $L$ be a line Lie algebroid over $M$ such that $\Gamma(L)=span\,\{s_L\}$. Then if we consider the Lie algebroid $\widetilde{E}$ given by direct product $\widetilde{E}=E\times L$, we remark that the map
\begin{displaymath}
J_{\widetilde{E}}:\Gamma(\widetilde{E})\rightarrow\Gamma(\widetilde{E})\,,\,J_{\widetilde{E}}(s\oplus fs_L)=(F_E(s)-f\xi)\oplus\eta(s)s_L
\end{displaymath} 
for every $f\in C^\infty(M)$, $s\in\Gamma(E)$ is linear and $J_{\widetilde{E}}^2=-I_{\widetilde{E}}$, that is $(\widetilde{E},J_{\widetilde{E}})$ is an almost complex Lie algebroid of rank $2m+2$. Also, as usual, we can prove that the almost contact structure $(F_E,\xi,\eta)$ on $E$ is normal if $J_{\widetilde{E}}$ is integrable.
\end{remark}

The following formula is useful for the calculation of the covariant derivative of $F_E$ depending on the tensors $N_E^{(1)}$ and $N_E^{(2)}$, in the case of arbitrary almost contact Riemannian structures on Lie algebroids.
\begin{proposition}
\label{pIII4}
Let $(F_E,\xi,\eta,g_E)$ be an almost contact Riemannian structure on the Lie algebroid $\left(E,\rho_{E},[\cdot,\cdot]_{E}\right)$ of rank $2m+1$ over a smooth manifold $M$. If $\nabla$ is the Levi-Civita connection of the metric $g_E$ then 
\begin{eqnarray*}
2g_E\left((\nabla_{s_1}F_E)s_2,s_3\right)&=&3d_E\Omega_E(s_1,F_E(s_2),F_E(s_3))-3d_E\Omega_E(s_1,s_2,s_3)+g_E(N_E^{(1)}(s_2,s_3),F_E(s_1))\\
& & +N_E^{(2)}(s_2,s_3)\eta(s_1)+2d_E\eta(F_E(s_2),s_1)\eta(s_3)-2d_E\eta(F_E(s_3),s_1)\eta(s_2)
\end{eqnarray*}
for every $s_1,s_2,s_3\in\Gamma(E)$.
\end{proposition}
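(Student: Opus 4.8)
The plan is to mimic the classical manifold computation (as in Blair's book) but carried out with the Lie-algebroid differential calculus, using only the structural identities established earlier in the excerpt. Recall the Koszul-type formula for the Levi--Civita connection $\nabla$ of $g_E$ on the Lie algebroid, which reads
\begin{align*}
2g_E(\nabla_{s_1}s_2,s_3)&=\rho_E(s_1)g_E(s_2,s_3)+\rho_E(s_2)g_E(s_1,s_3)-\rho_E(s_3)g_E(s_1,s_2)\\
&\quad +g_E([s_1,s_2]_E,s_3)-g_E([s_2,s_3]_E,s_1)+g_E([s_3,s_1]_E,s_2).
\end{align*}
Applying this to $\nabla_{s_1}(F_E s_2)$ and to $F_E(\nabla_{s_1}s_2)$ and subtracting, one expresses $2g_E((\nabla_{s_1}F_E)s_2,s_3)$ entirely in terms of $\rho_E$ acting on quantities of the form $g_E(F_E(\cdot),\cdot)=\Omega_E(\cdot,\cdot)$ or $g_E(\cdot,\cdot)$, and of brackets paired against sections via $g_E$. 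The compatibility identities I will use throughout are Proposition~\ref{pII2}(i),(iv) ($\eta(s)=g_E(s,\xi)$, $g_E(F_E s_1,s_2)=-g_E(s_1,F_E s_2)$), Proposition~\ref{pII1} ($F_E\xi=0$, $F_E^3=-F_E$, $\eta\circ F_E=0$), the defining relation $F_E^2=-I_E+\eta\otimes\xi$, and the intrinsic formulas for $d_E$, $\mathcal L$ on Lie algebroids.

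First I would rewrite the right-hand side of the claimed formula in bracket/anchor terms. Using \eqref{I11} for a $2$-form, $3\,d_E\Omega_E(a,b,c)$ is a sum of three $\rho_E$-terms and three bracket-terms involving $\Omega_E$. Writing out $3d_E\Omega_E(s_1,F_E s_2,F_E s_3)-3d_E\Omega_E(s_1,s_2,s_3)$, then substituting $\Omega_E(x,y)=g_E(x,F_E y)$ and repeatedly using $F_E^2=-I_E+\eta\otimes\xi$ to collapse the $F_E$'s, produces $\rho_E$-terms and bracket-terms in which the anchor and brackets act only on $g_E$-pairings and on $\eta$-evaluations. Separately, I expand $g_E(N_E^{(1)}(s_2,s_3),F_E s_1)=g_E(N_{F_E}(s_2,s_3),F_E s_1)+2\,d_E\eta(s_2,s_3)\,\eta(s_1)$ using the definition of the Nijenhuis tensor of $F_E$ (four bracket-terms) and then pair each bracket against $F_E s_1$ via $g_E$, again using $g_E(F_E x,F_E y)=g_E(x,y)-\eta(x)\eta(y)$ from \eqref{II3} and anti-symmetry. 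Likewise $N_E^{(2)}(s_2,s_3)=(\mathcal L_{F_E s_2}\eta)(s_3)-(\mathcal L_{F_E s_3}\eta)(s_2)$ expands, via $\mathcal L_v\eta=\imath_v d_E\eta+d_E(\imath_v\eta)$ and $\imath_v\eta=\eta(v)=g_E(v,\xi)$, into anchor- and $d_E\eta$-terms. Finally $2d_E\eta(F_E s_2,s_1)\eta(s_3)-2d_E\eta(F_E s_3,s_1)\eta(s_2)$ is already in the required elementary form.

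The core of the argument is then the bookkeeping: collect all $\rho_E(\cdot)(g_E(\cdot,\cdot))$-terms, all $\rho_E(\cdot)(\eta(\cdot))$-terms, and all $g_E([\cdot,\cdot]_E,\cdot)$-terms on the right-hand side, and match them term-by-term against the Koszul expansion of $2g_E((\nabla_{s_1}F_E)s_2,s_3)=2g_E(\nabla_{s_1}(F_E s_2),s_3)-2g_E(F_E\nabla_{s_1}s_2,s_3)$, the latter equalling $2g_E(\nabla_{s_1}(F_E s_2),s_3)+2g_E(\nabla_{s_1}s_2,F_E s_3)$ by Proposition~\ref{pII2}(iv). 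Here the identity $\Omega_E(s_1,s_2)=g_E(s_1,F_E s_2)$ and $\eta(s)=g_E(s,\xi)$ are what convert the geometric right-hand side into the same raw building blocks that the Koszul formula produces. I expect the main obstacle to be precisely this term-matching: ensuring that every bracket-term of the form $g_E([F_E s_i,s_j]_E,\cdot)$ appearing from $d_E\Omega_E$, from $N_{F_E}$, and from $\mathcal L_{F_E\cdot}\eta$ cancels or combines correctly, and that the stray $\eta\otimes\xi$ contributions coming from each use of $F_E^2=-I_E+\eta\otimes\xi$ are exactly accounted for by the explicit $\eta(s_1)$- and $\eta(s_2),\eta(s_3)$-terms on the right-hand side. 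Since $\nabla$ is torsion-free (so $[s,s']_E=\nabla_s s'-\nabla_{s'}s$) and metric, the computation is formally identical to the manifold case treated in \cite{Ta65, Bl}; the only novelty is that $\rho_E$ replaces the vector-field action and $[\cdot,\cdot]_E$ replaces the usual Lie bracket, so I would state this reduction explicitly and then refer to the classical identity, spelling out only the few places where the anchor enters, rather than reproducing the entire multi-page cancellation.
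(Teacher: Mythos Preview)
Your proposal is correct and is precisely the approach the paper has in mind: the paper's own proof consists of the single sentence ``Follows by direct calculus,'' and what you have outlined---Koszul formula, expansion of $d_E\Omega_E$, $N_{F_E}$, $N_E^{(2)}$ via the Lie-algebroid exterior derivative and Lie derivative, then term-by-term matching exactly as in the classical manifold computation of \cite{Ta65, Bl}---is that direct calculus spelled out. Your level of detail already exceeds the paper's.
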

\begin{proof}
Follows by direct calculus.
\end{proof}

\section{Contact structures on Lie algebroids}
\setcounter{equation}{0}
In this section we give the basic definitions and results about contact structures on Lie algebroids in relation with similar notions from contact manifolds theory, we present some examples from \cite{P-C-M1, P-C-M2}, we present a bijective corespondence between contact Riemannian structures and almost contact Riemannian structures on Lie algebroids, and we give some characterizations of contact Riemannian Lie algebroids. Also, the notions of $K$-contact, Sasakian and Kenmotsu Lie algebroids are introduced and some of their properties are analyzed.

\subsection{Contact Lie algebroids}

Firstly, we recall that a contact structure on an odd dimensional manifold $M^{2n+1}$ is defined by a maximally non-integrable distribution of rank $2n$, $\mathcal{D}^{2n}\subset TM$, called \textit{contact distribution}. Equivalently, we have that the curvature form of the distribution $\mathcal{D}^{2n}$ is non-degenerate. Moreover, if there exists a $1$-form $\eta\in\Omega^1(M)$ such that $\ker \eta=\mathcal{D}^{2n}$ then the contact structure is called \textit{cooriented}. Also, we notice that the contact structures on foliated manifolds was recently introduced (see for instance \cite{Ca-Pin-Pres, Pin-Pres}) as a triple $(M^{2n+1+m}, \mathcal{F}^{2n+1}, \mathcal{D}^{2n})$, where $M$ is a smooth manifold of dimension $2n+1+m$, $\mathcal{F}$ is a foliation of codimension $m$ ($\dim\mathcal{F}=2n+1$), and $\mathcal{D}\subset T\mathcal{F}$ is a distribution of dimension $2n$ (of the tangent bundle along the leaves) that is contact on each leaf of $\mathcal{F}$. This generalizes the contact fiber bundles construction from \cite{Le}. A standard example of foliated contact structure is the \textit{space of foliated oriented contact elements} on the cotangent spheric bundle $S(T^*\mathcal{F})$ of the leafwise cotangent bundle of $\mathcal{F}$ (see \cite{Pin-Pres}), which can be also obtained directly by pullback of the natural foliated contact structure on the projectivised cotangent bundle $P(T^*\mathcal{F})$ of $\mathcal{F}$ via the double-cover $S(T^*\mathcal{F})\rightarrow T^*\mathcal{F}\rightarrow P(T^*\mathcal{F})$ (see \cite{Ca-Pin-Pres}).  

These notions concerning to foliated contact structures can serve as elementary examples of our next general considerations, because it is well known that for a given regular foliated manifold  $(M,\mathcal{F})$, the tangent bundle  along the leaves $T\mathcal{F}$ has a natural structure of Lie algebroid with anchor the inclusion $i:T\mathcal{F}\rightarrow TM$ and the bracket the usual Lie bracket of vectors fields tangent to the leaves. Hence, the study of contact structures in the context of general Lie algebroids is natural and it can be of some interest.  

Let us continue with some basic definitions and results about contact Lie algebroids in relation with similar notions from contact manifolds/foliations theory . 

Let $\left(E,\rho_{E},[\cdot,\cdot]_{E}\right)$ be a Lie algebroid of rank $2m+1$ over a smooth manifold $M$. If a $1$--form $\eta$ on $E$, satisfying the condition from Theorem \ref{tII3.2}  is given, namely if $\eta\wedge (d_E\eta)^m\neq0$ everywhere on $M$, then we say that $\eta$ defines a \textit{contact structure} on $E$ or that $(E,\eta)$ is a \textit{contact Lie algebroid} and $\eta$ is called the \textit{contact form} of $E$. We remark that if $f\in C^\infty(M)$ nowhere vanishes on $M$ then $f\eta$ also is a contact form on $E$. Moreover, $\eta$ and $f\eta$ determine the same contact subbundle $D$, hence the authentic invariant of this change of contact forms is the contact subbundle. For this reason it is more natural to define a contact structure by a subbundle $D$ of rank $2m$ of $E$, with the property that there exists a $1$--form $\eta\in\Omega^1(E)$ so that $D=\cup_{x\in M} D_x$, where  $\ker\eta_x=D_x$ and $\eta\wedge(d_E\eta)^m$ nowhere vanishes on $M$. Alternatively, a \textit{contact structure on $E$} is given by a  pair $(\theta_E,\Omega_E)$, where $\theta_E\in\Omega^1(E)$ is a $1$-form on $E$ and $\Omega_E\in\Omega^2(E)$ is a $2$-form on $E$ such that $\Omega_E=d_E\theta_E$ and $(\theta_E\wedge\Omega_E\wedge\ldots\stackrel{m }{}\ldots\wedge\Omega_E)(x)\neq0$ , for every $x\in M$. The Reeb section $R\in\Gamma(E)$ is defined by $\imath_R\theta_E=1$ and $\imath_R\Omega_E=0$.

\begin{example}
\label{spheric} 
(\cite{P-C-M1}) For a Lie algebroid $(E,[\cdot,\cdot]_E,\rho_E)$  of rank $m$ over $M$ we can consider the prolongation of $E$ over its dual bundle $p^*:E^*\rightarrow M$ (see \cite{H-M, L-M-M}), which is a vector bundle $(\mathcal{T}^{E}E^*,p^*_1, E^*)$, where $\mathcal{T}^{E}E^*=\cup_{u^*\in E^*}\mathcal{T}^{E}_{u^*}E^*$ with
\begin{displaymath}
\mathcal{T}^{E}_{u^*}E^*=\left\{(u_x,V_{u^*})\in E_x\times T_{u^*}E^*\,|\,\rho_E(u_x)=(p^*)_*(V_{u^*})\,,\,p^*(u^*)=x\in M\right\},
\end{displaymath}
and the projection $p^*_1:\mathcal{T}^{E}E^*\rightarrow E^*$ given by $p^*_1(u_x,V_{u^*})=u^*$. A section $\widetilde{s}\in\Gamma(\mathcal{T}^{E}E^*)$ is called projectable if and only if there exist $s\in\Gamma(E)$ and $V\in\mathcal{X}(E^*)$ such that $(p^*)_*(V)=\rho_E(s)$ and $\widetilde{s}=((s(p^*(u^*)),V(u^*))$. We notice that $\mathcal{T}^{E}E^*$ has a Lie algebroid structure of rank $2m$ over $E^*$ with anchor $\rho_{\mathcal{T}^{E}E^*}:\mathcal{T}^{E}E^*\rightarrow TE^*$ given by $\rho_{\mathcal{T}^{E}E^*}(u,V)=V$ and Lie bracket uniquely determined by
\begin{displaymath}
[(s_1,V_1),(s_2,V_2)]_{\mathcal{T}^{E}E^*}=([s_1,s_2]_E,[V_1,V_2])\,,\,\,s_1,s_2\in\Gamma(E),\,V_1,V_2\in\mathcal{X}(E^*).
\end{displaymath}
The Liouville section $\lambda_E\in\Gamma((\mathcal{T}^{E}E^*)^*)$ is given by $\lambda_E(u^*)(u,V)=u^*(u)$, $u^*\in E^*$, $(u,V)\in \mathcal{T}^{E}E^*$, and the canonical symplectic section $\omega_E\in\Omega^2(\mathcal{T}^{E}E^*)$ is given by $\omega_E=-d_{\mathcal{T}^{E}E^*}\lambda_E$, thus $(\mathcal{T}^{E}E^*,\omega_E)$ is a symplectic Lie algebroid.

Now, we suppose that we have a bundle metric $g_E$ on $E$ and we consider the associated spherical bundle $p_{S(E^*)}:S(E^*)\rightarrow M$ of rank $m-1$, where $S(E^*)=\{u^*\in E^*\,|\,g_E^*(u^*,u^*)=1\}$.

Similarly as above we can consider the prolongation $\mathcal{T}^{E}S(E^*)$ of $E$ over the spherical bundle $S(E^*)$, and for the following diagram
$$
\begin{CD}
\mathcal{T}^{E}S(E^*)  @>\mathcal{T}_Ei>> \mathcal{T}^{E}E^*\\
@ V\tau_{\mathcal{T}^{E}S(E^*)}VV   @VV\tau_{\mathcal{T}^{E}E^*}=p_1^*V\\
S(E^*)  @>i>> E^*
\end{CD}
$$
we have
$d_{\mathcal{T}^{E}S(E^*)}((\mathcal{T}_Ei)^*\varphi)=(\mathcal{T}_Ei)^*(d_{\mathcal{T}^{E}E^*}\varphi)\,,\,\,\varphi\in\Omega(\mathcal{T}^{E}E^*)$, that is $\mathcal{T}^{E}S(E^*)\rightarrow S(E^*)$ is a Lie subalgebroid of $\mathcal{T}^{E}E^*\rightarrow E^*$. 

Now, for $\eta_E=-(\mathcal{T}_Ei)^*(\lambda_E)\in\Omega^1(\mathcal{T}^{E}S(E^*))$ we have $\eta_E\wedge(d_{\mathcal{T}^{E}S(E^*)}\eta_E)^{m-1}\neq0$, that is $(\mathcal{T}^{E}S(E^*),\eta_E)$ is a contact Lie algebroid.
\end{example}
\begin{remark}
More generally if $(E,[\cdot,\cdot]_E,\rho_E)$ is an exact symplectic Lie algebroid over $M$ of rank $2m$ with exact symplectic section $\Omega=-d_E\lambda$ and $F\rightarrow N$ is a Lie subalgebroid of rank $2m-1$ of $E$ then according to \cite{P-C-M1, P-C-M2}, $(F,\eta=i_F^*(\lambda))$ is a contact Lie algebroid, where $i_F:F\rightarrow E$ is the natural inclusion.
\end{remark}
The above definition is so called cooriented contact stucture and $\eta$ such that $\ker\eta=D$ is called a coorientation of the contact structure $(E,D)$. However, as in the case of smooth manifolds (see for instance \cite{C-S}), we can talk about general contact structures on Lie algebroids (not necessarily cooriented) and their brackets as follows.
\begin{definition}
A \textit{contact structure} on a Lie algebroid $(E,[\cdot,\cdot]_E,\rho_E)$ of rank $2m+1$ is a subbundle $D$ of rank $2m$ of  $E$ which is maximally non-integrable, that is, the curvature $Curv(D):D\times D\rightarrow L$ is non-degenerate, where $L$ is the quotient line bundle $L:=E/D$ and $Curv(D)$ is given at the level of sections by $Curv(D)(s_1,s_2)=[s_1,s_2]_E$mod$D$. The pair $(E,D)$ is called a \textit{contact Lie algebroid}.
 \end{definition}
\begin{definition}
A Reeb section of the contact Lie algebroid $(E,D)$ is every section $\xi\in\Gamma(E)$ such that $[\xi,\Gamma(D)]_E\subset\Gamma(D)$, and we denote by $\Gamma_{{\rm Reeb}}(E,D)$ the set of Reeb sections.
\end{definition}
\begin{proposition}
\label{lareeb}
The set of Reeb sections of a contact Lie algebroid $(E,D)$ is a Lie subalgebra of the Lie algebra $\Gamma(E)$ of all sections of $E$ and $\Gamma(E)=\Gamma_{{\rm Reeb}}(E,D)\oplus\Gamma(D)$.
\end{proposition}
\begin{proof}
Follows as in the contact manifolds case (see \cite{C-S}).
\end{proof}
Also, it is useful to consider the dual point of view on contact structures on Lie algebroids, that is to view $D$ as the kernel of a $1$-form  on $E$ with values in $L$ ($\theta_E\in\Omega^1(E,L)$ and viewed as the canonical projection from $E$ to $L$). Now, the curvature of $D$ can be rewritten as $Curv(D)(s_1,s_2)=\theta_E\left([s_1,s_2]_E\right)$, and we say that $\theta_E$ is of \textit{contact type}. The case when $L$ is the trivial line bundle gives rise to the  above cooriented case. The previous proposition yields
\begin{corollary}
The $1$-form  $\theta_E$ with values in $L$ restricts to a vector space isomorphism
\begin{equation}
\label{iso-Reeb}
\theta_E|_{\Gamma_{{\rm Reeb}}(E,D)}:\Gamma_{{\rm Reeb}}(E,D)\stackrel{\cong}{\rightarrow}\Gamma(L).
\end{equation}
\end{corollary}
Thus, the Lie algebra structure of $\Gamma_{{\rm Reeb}}(E,D)$ (from Proposition \ref{lareeb}) can be transfered to a Lie algebra structure on $\Gamma(L)$ and denote the corresponding bracket by $\{\cdot,\cdot\}_{L}$.
\begin{definition}
The bracket $\{\cdot,\cdot\}_{L}$ on $\Gamma(L)$ is called the \textit{Reeb bracket} associated to the contact Lie algebroid $(E,D)$ (which is a Kirillov type bracket \cite{Kir}).
\end{definition}
Also, we notice that similarly as in the contact manifolds case (see Lemma 2.5 from \cite{C-S}), the Proposition \ref{lareeb} can be reformulated in the form
\begin{proposition}
\label{prop-hom}
The map $\Gamma(E)\cong\Gamma(L)\oplus\Gamma({\rm Hom}(D,L))$, $s\mapsto(\theta_E(s),\theta_E\left([\cdot,s]_E\right))$, is an isomorphism of vector spaces and the induced $C^\infty(M)$-module structure on the right hand side is given by $f\cdot(s,\phi)=(fs,\phi+d_Ef\otimes s)$, for every $s\in\Gamma(L)$ and $\phi\in\Gamma({\rm Hom}(D,L))$.
\end{proposition}
The surjectivity of \eqref{iso-Reeb} implies that for every section $s\in\Gamma(L)$, there exists a unique section $\xi_s\in\Gamma(E)$ such that $\theta_E(\xi_s)=s$ and $\theta_E\left([\xi_s,t]\right)=0$ for every section $t\in\Gamma(D)$. In this case $\xi_s$ is called the \textit{Reeb section associated to $s$} and the Reeb bracket $\{\cdot,\cdot\}_L$ has the following characteristic property: $[\xi_{s_1},\xi_{s_2}]_E=\xi_{\{s_1,s_2\}_L}$, for every $s_1,s_2\in\Gamma(L)$. Moreover, applying $\theta_E$, we get the explicit formula for the Reeb bracket in terms of the $1$-form $\theta_E$, namely
\begin{equation}
\label{Reeb-bracket}
\{s_1,s_2\}_L=\theta_E\left([\xi_{s_1},\xi_{s_2}]_E\right), \,s_1,s_2\in\Gamma(L).
\end{equation}
The Proposition \ref{prop-hom} implies that, for $f\in C^\infty(M)$ and $s\in\Gamma(L)$, we have
\begin{equation}
\label{eq-reeb}
\xi_{fs}=f\xi_s+\beta(d_Ef\otimes s),
\end{equation}
where $\beta:{\rm Hom}(D,L)\rightarrow D$ is the isomorphism induced by $Curv(D)$, that is $${\rm Hom}(D,L)\ni Curv(D)(t,\cdot)\mapsto t\in\Gamma(D).$$
Also, we notice that the inverse of the isomorphism defined in Proposition \ref{prop-hom} sends $(s,\phi)$ to $\xi_s-\beta(\phi)$.
\begin{example}
When $L$ is the trivial line bundle the Reeb section associated to the constant function $1$ is the standard Reeb section $\xi$ associated to the contact form $\eta$ and it is uniquely determined by $\imath_\xi\eta=1$ and $\imath_\xi(d_E\eta)=0$. The other Reeb section corresponding to an arbitrary smooth function $f\in C^\infty(M)$ is $\xi_f=f\xi+\beta(d_Ef)$. In this case $\beta:D^*\rightarrow D$ is the isomorphism induced by $d_E\eta$. Finally, we notice that the Reeb bracket becomes a Jacobi bracket on $C^\infty(M)$ as follows
\begin{equation}
\label{bracket}
\{f,g\}_L=\Lambda(d_Ef,d_Eg)+\rho_E(\xi)(f)g-f\rho_E(\xi)(g),
\end{equation} 
where the bisection $\Lambda\in\Gamma(\bigwedge^2E)$ is defined by using $\beta$, that is $\Lambda(d_Ef,d_Eg)=d_E\eta(\beta(d_Ef),\beta(d_Eg))$. 
\end{example}

\begin{remark}
In some recent papers (see \cite{Br-Gr-Gr, Gra}) are introduced contact structures on principal $\mathbb{R}^\times:=\mathbb{R}-\{0\}$-bundles (using a new language about contact structures). More exactly, for a given $\mathbb{R}^\times$-action $h:\mathbb{R}^\times\times P\rightarrow P$ on a vector bundle $P\rightarrow M$, a \textit{contact structure} is referred as a triple $(P,h,\omega)$ where $\omega$ is a $1$-homogeneous symplectic form on $P$, that is $(h_t)^*\omega=t\omega\,(t\neq0)$. Using a similar language, the construction from \cite{P-C-M1} (recalled in Example \ref{spheric}) can be formulated in the non coorientable case as follows. Let $(E,[\cdot,\cdot]_E,\rho_E)$  be a Lie algebroid of rank $m$ over $M$, $p^*:E^*\rightarrow M$ the dual vector bundle of $E$ and $h:\mathbb{R}^\times\times E^*\rightarrow E^*$ be the multiplicative $\mathbb{R}^\times$-action on $E^*$ (then the projective bundle of $E^*$ is $P(E^*):=E^*/\mathbb{R}^\times\rightarrow M$, rank$P(E^*)=m-1$). As usual (for tangent and cotangent lifts of a $\mathbb{R}^\times$-action on manifolds or supermanifolds \cite{Gra}) there is a natural lift of $h$ to a $\mathbb{R}^\times$-action on $\mathcal{T}^EE^*$ denoted by $\mathcal{T}^Eh:\mathbb{R}^\times\times\mathcal{T}^EE^*\rightarrow\mathcal{T}^EE^*$ given by $(\mathcal{T}^Eh)_t=\mathcal{T}^E(h_t)$, which is a compatible action, that is $(\mathcal{T}^Eh)_t$ are Lie algebroids automorphisms (see \cite{Mart}). Then, there is a natural Lie algebroid (over quotied spaces) $P(\mathcal{T}^EE^*)\rightarrow P(E^*)$ which is isomorphic with the prolongation $\mathcal{T}^EP(E^*)$ of $E$ over the projective bundle $P(E^*)\rightarrow M$. Now, since the canonical symplectic section $\omega_E\in\Omega^2(\mathcal{T}^EE^*)$ is linear, thus homogeneous, the triple $(\mathcal{T}^EE^*, \mathcal{T}^Eh,\omega_E)$ is a contact structure. In a traditional language it corresponds to a contact structure on the Lie algebroid $\mathcal{T}^EP(E^*)\rightarrow P(E^*)$, let us say a maximally non-integrable subbundle   $\mathcal{D}^EP(E^*)\subset \mathcal{T}^EP(E^*)$ of rank $2m-2$, and then, the contact structure $\mathcal{D}^EP(E^*)$ pullback to a contact structure on the Lie algebroid $\mathcal{T}^ES(E^*)$ through the double-cover $S(E^*)\rightarrow E^*\rightarrow P(E^*)$.

\end{remark}

\subsection{Contact Riemannian Lie algebroids}
In what follows we consider only the coorientable case. When an almost contact Riemannian structure defined in Theorem \ref{tII3.2}  is fixed on the contact Lie algebroid $(E,\eta)$ then we say that $(E,F_E,\xi,\eta,g_E)$ is a \textit{contact Riemannian Lie algebroid}.
\begin{remark}
From the definition of the fundamental form and from Theorem \ref{tII3.2}  it results that for a given contact Riemannian structure, the endomorphism $F_E$ is uniquely determined by the $1$--form $\eta$ and by the metric $g_E$.
\end{remark}
For the contact Riemannian Lie algebroid $(E,F_E,\xi,\eta,g_E)$ we consider the contact subbundle $D$. Taking into account Theorem \ref{tII3.2}, the restriction to $D$ of the $2$--form $d_E\eta$ is nondegenerate and then we can state the following:
\begin{proposition}
\label{pIV1}
The contact subbundle $D$ of a contact Riemannian Lie algebroid has a symplectic vector bundle structure with the symplectic $2$--form $d_E\eta|_D$.
\end{proposition}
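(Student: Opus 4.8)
The plan is to show that $d_E\eta|_D$ is a symplectic form on the vector bundle $D$, that is, that it is a nondegenerate $d_E$-closed $2$-section. Closedness is immediate: $d_E\eta$ is automatically $d_E$-closed since $d_E^2=0$, and restricting to the subbundle $D$ does not affect this. So the whole content is nondegeneracy of the restriction, which I would extract directly from the nonvanishing condition $\eta\wedge(d_E\eta)^m\neq 0$ on $M$.

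First I would work pointwise. Fix $x\in M$ and consider the fibre $E_x$, which carries the alternating bilinear form $\omega:=(d_E\eta)_x$ and the covector $\eta_x$ with $\eta_x\neq 0$ (indeed $\eta_x(\xi_x)=1$). By the structure of alternating forms on a finite-dimensional space, the radical $\ker\omega=\{v\in E_x : \imath_v\omega=0\}$ has a well-defined dimension, and $\omega$ induces a nondegenerate form on $E_x/\ker\omega$, whose dimension is even, say $2r$; then $\omega^{r}\neq 0$ while $\omega^{r+1}=0$. The hypothesis $\eta_x\wedge\omega^{m}\neq 0$ forces $\omega^{m}\neq 0$, hence $r\ge m$; and since $\dim E_x=2m+1$ we cannot have $2r>2m$, so $r=m$, i.e. $\dim\ker\omega=1$. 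Moreover $\eta_x\wedge\omega^m\neq 0$ means the line $\ker\omega$ is transverse to $D_x=\ker\eta_x$; equivalently $\ker\omega\not\subseteq D_x$. (In fact, because $\imath_\xi d_E\eta=0$ by the characterization of $\xi$ in Theorem \ref{tII3}(ii), one has $\ker\omega=\langle\xi_x\rangle$, which indeed is transverse to $D_x$.)

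Next I would deduce nondegeneracy of $\omega|_{D_x}$. Suppose $v\in D_x$ satisfies $\omega(v,w)=0$ for all $w\in D_x$. Since $E_x=D_x\oplus\ker\omega$ (the sum is direct by transversality and dimension count) and $\omega(v,\cdot)$ already vanishes on $\ker\omega$ by definition of the radical, it follows that $\imath_v\omega=0$ on all of $E_x$, i.e. $v\in\ker\omega$. But $v\in D_x\cap\ker\omega=\{0\}$, so $v=0$. Hence $\omega|_{D_x}$ is nondegenerate for every $x$. Since $x$ was arbitrary and $D$ has constant rank $2m$, $d_E\eta|_D$ is a nowhere-degenerate $d_E$-closed $2$-section on $D$, which is exactly the assertion that $(D,d_E\eta|_D)$ is a symplectic vector bundle.

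The argument is essentially linear algebra carried out fibrewise, so there is no serious obstacle; the only point deserving care is the transversality of the radical of $d_E\eta$ to $D$, which is where the wedge condition $\eta\wedge(d_E\eta)^m\neq0$ (as opposed to the weaker $(d_E\eta)^m\neq0$) is genuinely used, and which is why the $1$-dimensional kernel sits along $\xi$ rather than inside $D$. One should also note that this uses only the purely algebraic data on the vector bundle $E$, so no appeal to the Lie bracket or the anchor is needed beyond the fact, already recorded in Theorem \ref{tII3}, that $d_E$ is a genuine differential with $d_E^2=0$.
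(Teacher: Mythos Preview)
Your argument is correct and is essentially the same reasoning the paper relies on: the paper gives no proof for Proposition~\ref{pIV1} beyond the sentence preceding it, which simply invokes Theorem~\ref{tII3} for the nondegeneracy of $d_E\eta|_D$; you have spelled out the pointwise linear-algebra argument (rank of the radical, transversality to $\ker\eta$) that is implicit there. One small remark: for a \emph{symplectic vector bundle} only fibrewise nondegeneracy is required, so your opening comment about $d_E$-closedness is unnecessary (and slightly delicate, since $D$ need not be a Lie subalgebroid), but this does not affect the validity of the proof and in fact mirrors the paper's own usage in the proof of Theorem~\ref{tII3}(i).
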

Denote by $\mathcal{J}(D)$ the set of almost complex structures on $D$, compatible with $d_E\eta$, that is the structures $\mathcal{J}:D\rightarrow D$ with the properties
\begin{equation}
\label{IV1}
\mathcal{J}^2=-I_D\,,\,d_E\eta(\mathcal{J}(s_1),\mathcal{J}(s_2))=d_E\eta(s_1,s_2)\,,\,d_E\eta(\mathcal{J}(s),s)\geq0
\end{equation}
for every $s,s_1,s_2\in\Gamma(D)$. This means that we consider on $D$ only almost complex structures compatible with its symplectic bundle structure. We remark that if $(F_E,\xi,\eta,g_E)$ is the almost contact Riemannian structure associated to the contact Riemannian structure defined in Theorem \ref{tII3.2} on the Lie algebroid $E$ then $F_E|_D\in\mathcal{J}(D)$.

For each $\mathcal{J}\in\mathcal{J}(D)$ the map $g_{\mathcal{J}}$, defined by
\begin{equation}
\label{IV2}
g_{\mathcal{J}}(s_1,s_2)=d_E\eta(\mathcal{J}(s_1),s_2)\,,\,\,s_1,s_2\in\Gamma(D)
\end{equation}
is a Hermitian metric on $D$, that is, it satisfies the condition
\begin{equation}
\label{IV3}
g_{\mathcal{J}}(\mathcal{J}(s_1),\mathcal{J}(s_2))=g_{\mathcal{J}}(s_1,s_2)\,,\,\,s_1,s_2\in\Gamma(D).
\end{equation}
Moreover, if we denote by $\mathcal{G}(D)$ the set of all Riemannian metrics on $D$, satisfying the equality \eqref{IV3}, it is easy to see that the map $\mathcal{J}\in\mathcal{J}(D)\mapsto g_{\mathcal{J}}\in\mathcal{G}(D)$ is bijective. Since $\eta$ nowhere vanishes on $M$, we denote by $\xi$ a section of $E$ such that $\eta(\xi)=1$ and extend $\mathcal{J}$ to an endomorphism $F_E$ of $\Gamma(E)$ by setting $F_E|_D=\mathcal{J}$, $F_E(\xi)=0$. Consider the decompositions $s_1=s_1^D+a\xi$, $s_2=s_2^D+b\xi$, where $s_1^D,s_2^D$ are the $D$ components of the sections $s_1$ and $s_2$, respectively. Similarly, we extend $g_{\mathcal{J}}$ to a metric on $E$ by
\begin{equation}
\label{IV4}
g_E(s_1,s_2)=g_{\mathcal{J}}(s_1^D,s_2^D)+ab
\end{equation}
for every $s_1,s_2\in\Gamma(E)$. Taking into account \eqref{IV2} we can prove that $d_E\eta(s_1,s_2)=g_E(s_1,F_E(s_2))$, hence the contact structure on $E$ is a Riemannian one. Moreover, $(F_E,\xi,\eta,g_E)$ is an almost contact Riemannian structure on $E$ and then the set of almost contact Riemannian structures on $E$ is in bijective correspondence with the set of almost complex structures of Hermitian type $(\mathcal{J},g_{\mathcal{J}})$ defined on the contact subbundle $D$.

Using the notion of a Killing section on Riemannian Lie algebroids (introduced recently in \cite{Br}) and the classical calculus on Lie algebroids, similar arguments used in the study of contact Riemannian manifolds (see \cite{Bl1,Pi}) yields
\begin{proposition}
\label{pIV2}
Let $E$ be a contact Riemannian Lie algebroid and let $(F_E,\xi,\eta,g_E)$ be the associated almost contact Riemannian structure. Then:
\begin{enumerate}
\item[(i)] $N_E^{(2)}=0$, $N_E^{(4)}=0$;
\item[(ii)] $N_E^{(3)}=0$ if and only if $\xi$ is a Killing section, i.e. $\mathcal{L}_\xi g_E=0$;
\item[(iii)] $\nabla_\xi F_E=0$.
\end{enumerate}
\end{proposition}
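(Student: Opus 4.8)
The plan is to translate the classical contact-metric computations (as in Blair's book) into the Lie algebroid setting, using the differential calculus ($d_E$, $\mathcal{L}$, $\imath$) and the Levi-Civita connection $\nabla$ of $g_E$ on $E$, all of which behave formally as in the manifold case. Throughout I will use freely that for a contact Riemannian structure the fundamental form $\Omega_E$ equals $d_E\eta$, that $\eta(s)=g_E(s,\xi)$ (Proposition \ref{pII2}(i)), that $\imath_\xi d_E\eta=0$, $\eta(\xi)=1$, and that $\nabla$ is torsion-free and metric, so that $d_E\eta(s_1,s_2)=g_E(\nabla_{s_1}\xi,s_2)-g_E(\nabla_{s_2}\xi,s_1)$ and more generally $d_E\omega$ can be written via covariant derivatives.

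For (i): $N_E^{(4)}(s)=(\mathcal{L}_\xi\eta)(s)=\xi(\eta(s))-\eta([\xi,s]_E)$ — but here the left action is by $\rho_E(\xi)$ — equals $(\imath_\xi d_E\eta)(s)+d_E(\imath_\xi\eta)(s)$ by Cartan's formula on the Lie algebroid, and both terms vanish since $\imath_\xi d_E\eta=0$ and $\imath_\xi\eta=\eta(\xi)=1$ is constant. For $N_E^{(2)}(s_1,s_2)=(\mathcal{L}_{F_E(s_1)}\eta)(s_2)-(\mathcal{L}_{F_E(s_2)}\eta)(s_1)$, expand each Lie derivative by Cartan's formula: $\mathcal{L}_{F_E(s_i)}\eta=\imath_{F_E(s_i)}d_E\eta+d_E(\eta(F_E(s_i)))$, and $\eta\circ F_E=0$ by Proposition \ref{pII1}(iii), so $N_E^{(2)}(s_1,s_2)=d_E\eta(F_E(s_1),s_2)-d_E\eta(F_E(s_2),s_1)$. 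Since $d_E\eta=\Omega_E$ and $\Omega_E(s_1,s_2)=g_E(s_1,F_E(s_2))$, this is $g_E(F_E(s_1),F_E(s_2))-g_E(F_E(s_2),F_E(s_1))=0$ by the skew-symmetry in Proposition \ref{pII2}(iv) (equivalently by \eqref{II5}). Alternatively (i) follows at once from Proposition \ref{pIII4} after restricting to the contact structure, since $d_E\Omega_E=d_E^2\eta=0$.

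For (iii): compute $g_E((\nabla_\xi F_E)s_1,s_2)$ using Proposition \ref{pIII4} with $s_1$ replaced by $\xi$. The three-form $d_E\Omega_E=d_E^2\eta$ vanishes identically, so the first two terms drop. The term $g_E(N_E^{(1)}(s_1,s_2),F_E(\xi))$ vanishes because $F_E(\xi)=0$ (Proposition \ref{pII1}(i)). The term $N_E^{(2)}(s_1,s_2)\eta(\xi)$ vanishes by part (i). For the last two terms, $d_E\eta(F_E(s_1),\xi)\eta(s_2)-d_E\eta(F_E(s_2),\xi)\eta(s_1)$: each factor $d_E\eta(F_E(s_i),\xi)=-d_E\eta(\xi,F_E(s_i))=-(\imath_\xi d_E\eta)(F_E(s_i))=0$. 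Hence $g_E((\nabla_\xi F_E)s_1,s_2)=0$ for all $s_2$, so $\nabla_\xi F_E=0$.

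For (ii): the identity $N_E^{(3)}=\tfrac12\mathcal{L}_\xi F_E$, and we want to relate it to $\mathcal{L}_\xi g_E$. The key computation is the formula $(\mathcal{L}_\xi g_E)(s_1,s_2)=g_E((\mathcal{L}_\xi F_E)(F_E s_1),s_2)$ valid on $D$ (and both sides vanish when either argument is $\xi$, using $\mathcal{L}_\xi\eta=0$ from (i) and $\mathcal{L}_\xi\xi=0$). To prove this I would write $\mathcal{L}_\xi g_E$ in terms of $\nabla$: $(\mathcal{L}_\xi g_E)(s_1,s_2)=g_E(\nabla_{s_1}\xi,s_2)+g_E(s_1,\nabla_{s_2}\xi)$, expand $\mathcal{L}_\xi F_E$ similarly via $\nabla$, use $\nabla_\xi F_E=0$ from (iii), and use $d_E\eta(s_1,s_2)=g_E(s_1,F_E s_2)$ together with $\eta(s)=g_E(s,\xi)$ to identify $\nabla_s\xi$ with (the $F_E$-twist of) a symmetric operator. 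Since $F_E|_D$ is invertible, $\mathcal{L}_\xi F_E=0$ on $D$ iff $\mathcal{L}_\xi g_E=0$ on $D$; combined with the vanishing in the $\xi$-direction this gives the equivalence $N_E^{(3)}=0\iff\mathcal{L}_\xi g_E=0$. The main obstacle is this part (ii): one must carefully push the classical operator identity $h=\tfrac12\mathcal{L}_\xi F_E$ and the relation $\mathcal{L}_\xi g_E=2g_E(\cdot,h F_E\cdot)$ through the Lie algebroid formalism, checking that the Cartan calculus and the torsion-free, metric properties of $\nabla$ are all that is used, and handling the $\xi$-components separately; parts (i) and (iii) are then essentially immediate bookkeeping with Proposition \ref{pIII4}.
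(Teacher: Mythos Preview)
Your arguments for (i) and (iii) are essentially identical to the paper's: Cartan's formula for both pieces of (i), and Proposition~\ref{pIII4} evaluated at $s_1=\xi$ for (iii). No issues there.

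For (ii), however, your route through the Levi--Civita connection is considerably more laborious than what the paper does, and your sketch leaves the crucial step (``identify $\nabla_s\xi$ with the $F_E$-twist of a symmetric operator'' and then derive the displayed identity) unverified. The paper's argument bypasses $\nabla$ entirely and does not need (iii) as input: one simply Lie--differentiates the defining relation $d_E\eta(s_1,s_2)=g_E(s_1,F_E(s_2))$ along $\xi$. Since $\mathcal{L}_\xi(d_E\eta)=d_E(\mathcal{L}_\xi\eta)=0$ by part (i), the product rule gives
\[
0=(\mathcal{L}_\xi d_E\eta)(s_1,s_2)=(\mathcal{L}_\xi g_E)(s_1,F_E(s_2))+g_E\bigl(s_1,(\mathcal{L}_\xi F_E)(s_2)\bigr)=(\mathcal{L}_\xi g_E)(s_1,F_E(s_2))+2g_E\bigl(s_1,N_E^{(3)}(s_2)\bigr).
\]
Since $F_E|_D$ is an isomorphism and $(\mathcal{L}_\xi g_E)(s_1,\xi)=(\mathcal{L}_\xi\eta)(s_1)=0$, the equivalence $N_E^{(3)}=0\iff\mathcal{L}_\xi g_E=0$ follows at once. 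This is your target identity $(\mathcal{L}_\xi g_E)(s_1,s_2)=g_E\bigl((\mathcal{L}_\xi F_E)(F_E s_1),s_2\bigr)$ on $D$, obtained in one line without ever invoking $\nabla$, $\nabla_\xi F_E=0$, or any analysis of $\nabla_s\xi$. Your plan to push the operator $h=\tfrac12\mathcal{L}_\xi F_E$ through the connection would eventually work, but it risks circularity (the anticommutation $F_E\circ N_E^{(3)}=-N_E^{(3)}\circ F_E$ you would likely need is Proposition~\ref{pIV4}(iii), proved \emph{after} the present proposition), and it obscures the fact that (ii) is a direct consequence of the Leibniz rule for $\mathcal{L}_\xi$ applied to $\Omega_E=g_E(\,\cdot\,,F_E\,\cdot\,)$.
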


A more suitable form of the results from Proposition \ref{pIV2} is the following
\begin{proposition}
\label{pIV3}
Let $E$ be a contact Riemannian Lie algebroid and let $(F_E,\xi,\eta,g_E)$ be the associated almost contact Riemannian structure. Then:
\begin{displaymath}
\mathcal{L}_\xi\eta=0\,,\,\mathcal{L}_\xi(d_E\eta)=0\,,\,\left(\mathcal{L}_{F_E(s_1)}\eta\right)(s_2)=\left(\mathcal{L}_{F_E(s_2)}\eta\right)(s_1)
\end{displaymath}
for every $s_1,s_2\in\Gamma(E)$.
\end{proposition}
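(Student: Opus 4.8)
The plan is to extract all three identities from Proposition \ref{pIV2} together with the Cartan calculus on the Lie algebroid $E$, without any fresh computation. The first identity $\mathcal{L}_\xi\eta=0$ is literally $N_E^{(4)}=0$, which is part (i) of Proposition \ref{pIV2}; alternatively it follows in one line from Cartan's magic formula on the Lie algebroid, $\mathcal{L}_\xi\eta=d_E(\imath_\xi\eta)+\imath_\xi(d_E\eta)=d_E(1)+\imath_\xi(d_E\eta)=0$, since $\eta(\xi)=1$ and $\imath_\xi(d_E\eta)=0$ by definition of the Reeb section (Theorem \ref{tII3} (ii)).

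For the second identity I would apply the same magic formula to the $2$--form $d_E\eta$ and use $d_E^2=0$ (which holds for the Lie algebroid differential defined in \eqref{I11}): indeed $\mathcal{L}_\xi(d_E\eta)=d_E(\imath_\xi d_E\eta)+\imath_\xi(d_E d_E\eta)=d_E(0)+0=0$. Equivalently, one can invoke the commutation $[\mathcal{L}_\xi,d_E]=0$ together with $\mathcal{L}_\xi\eta=0$, which is precisely the identity already exploited inside the proof of Proposition \ref{pIV2} (ii). Both routes are immediate once the Lie-algebroid Cartan calculus referenced in the excerpt (see \cite{Fe, M}) is taken for granted.

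The third identity is the symmetry $(\mathcal{L}_{F_E(s_1)}\eta)(s_2)=(\mathcal{L}_{F_E(s_2)}\eta)(s_1)$, which is exactly the vanishing $N_E^{(2)}=0$ after recalling the definition of $N_E^{(2)}$ in \eqref{III2}: $N_E^{(2)}(s_1,s_2)=(\mathcal{L}_{F_E(s_1)}\eta)(s_2)-(\mathcal{L}_{F_E(s_2)}\eta)(s_1)$. So I would simply cite Proposition \ref{pIV2} (i), or reproduce its short argument: expand $(\mathcal{L}_{F_E(s_i)}\eta)(s_j)=\rho_E(F_E(s_i))(\eta(s_j))-\eta([F_E(s_i),s_j]_E)$, rewrite the combination in terms of $d_E\eta$ via the formula \eqref{I11} for $p=1$, and then use the compatibility $d_E\eta(s_1,s_2)=g_E(s_1,F_E(s_2))$ from Theorem \ref{tII3} (ii) together with the skew-symmetry $g_E(F_E(s_1),s_2)=-g_E(s_1,F_E(s_2))$ of Proposition \ref{pII2} (iv) to see that the two terms in $N_E^{(2)}$ coincide.

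There is no genuine obstacle here: the proposition is a cosmetic repackaging of Proposition \ref{pIV2}(i) and the identity used in the proof of Proposition \ref{pIV2}(ii), and the only thing to be careful about is that every step uses the Lie-algebroid versions of $d_E$, $\imath$, $\mathcal{L}$ (and the relation $d_E^2=0$ and Cartan's formula $\mathcal{L}_s=d_E\imath_s+\imath_s d_E$ in that framework), which are all available from the references cited in Section 2. The expected write-up is therefore just three one-line derivations, and I would phrase it as ``follows immediately from Proposition \ref{pIV2} and the Cartan calculus on Lie algebroids.''
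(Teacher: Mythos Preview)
Your proposal is correct and matches the paper's own treatment exactly: the paper does not give a separate proof but introduces Proposition~\ref{pIV3} with the sentence ``A more suitable form of the results from Proposition~\ref{pIV2} and its proof is the following,'' which is precisely your reading of it as a cosmetic repackaging of $N_E^{(4)}=0$, the identity $\mathcal{L}_\xi(d_E\eta)=0$ used in the proof of (ii), and $N_E^{(2)}=0$. Your added one-line derivations via Cartan's formula are fine and more explicit than what the paper writes.
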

Another useful result in relation with corresponding notions from contact Riemannian manifolds is
\begin{proposition}
\label{pIV4}
On a contact Riemannian Lie algebroid the following formulas hold:
\begin{enumerate}
\item[(i)] $g_E(N_E^{(3)}(s_1),s_2)=g_E(s_1,N_E^{(3)}(s_2))$;
\item[(ii)] $\nabla_s\xi=-F_E(s)-F_E(N_E^{(3)}(s))$;
\item[(iii)] $F_E\circ N_E^{(3)}=-N_E^{(3)}\circ F_E$;
\item[(iv)] $trace\,N_E^{(3)}=0$, $trace\,(N_E^{(3)}\circ F_E)=0$, $N_E^{(3)}(\xi)=0$, $\eta(N_E^{(3)}(s))=0$;
\item[(v)] $(\nabla_{s_1}F_E)(s_2)+(\nabla_{F_E(s_1)}F_E)F_E(s_2)=2g_E(s_1,s_2)\xi-\eta(s_2)\left(s_1+N_E^{(3)}(s_1)+\eta(s_1)\xi\right)$.
\end{enumerate}
\end{proposition}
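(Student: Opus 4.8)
The plan is to prove the six identities in the order (iii), (iv), (i), (ii), (v), since each later one feeds on the earlier ones. Write $h:=N_E^{(3)}=\tfrac12\mathcal{L}_\xi F_E$ throughout, and keep at hand the facts already available: $\mathcal{L}_\xi\eta=0$ and $\mathcal{L}_\xi(d_E\eta)=0$ (Proposition \ref{pIV3}); $N_E^{(2)}=N_E^{(4)}=0$ and $\nabla_\xi F_E=0$ (Proposition \ref{pIV2}); the relation $d_E\eta(s_1,s_2)=g_E(s_1,F_E(s_2))$ (Theorem \ref{tII3}(ii)); and the identity $(\mathcal{L}_\xi g_E)(s_1,F_E(s_2))=-2g_E(s_1,h(s_2))$ isolated in the proof of Proposition \ref{pIV2}(ii).

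For (iii) and the pointwise parts of (iv) I would differentiate the structure equation $F_E^2=-I_E+\eta\otimes\xi$ along $\xi$: its right-hand side is $\mathcal{L}_\xi$-invariant (because $\mathcal{L}_\xi\eta=0$ and $\mathcal{L}_\xi\xi=[\xi,\xi]_E=0$), and the Leibniz rule then forces $h\circ F_E+F_E\circ h=0$. That $h(\xi)=0$ is immediate from $F_E(\xi)=0$ (Proposition \ref{pII1}) and $[\xi,\xi]_E=0$; that $\eta\circ h=0$ follows by applying $\eta$ to $2h(s)=[\xi,F_E(s)]_E-F_E([\xi,s]_E)$, using $\eta\circ F_E=0$ together with $\eta([\xi,F_E(s)]_E)=-(\mathcal{L}_\xi\eta)(F_E(s))=0$. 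For the two trace identities I would expand in an $F_E$-basis $\{s_a,F_E(s_a),\xi\}$: the compatibility \eqref{II3} combined with $hF_E=-F_E h$ and $\eta\circ h=0$ gives $g_E(h(F_E s_a),F_E s_a)=-g_E(h(s_a),s_a)$, so the diagonal contributions cancel pairwise; and $\mathrm{trace}(h\circ F_E)=\mathrm{trace}(F_E\circ h)=-\mathrm{trace}(h\circ F_E)$ by cyclicity of the trace.

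For (i) the lever is the symmetry of the tensor $\mathcal{L}_\xi g_E$. Replacing $s_1$ by $F_E(s_1)$ in $(\mathcal{L}_\xi g_E)(s_1,F_E(s_2))=-2g_E(s_1,h(s_2))$ and symmetrizing in $s_1,s_2$ yields $g_E(F_E(s_1),h(s_2))=g_E(F_E(s_2),h(s_1))$; combining this with Proposition \ref{pII2}(iv), with (iii), and finally with the substitution $s_2\mapsto -F_E(s_2)+\eta(s_2)\xi$ (whose $F_E$-image is $s_2-\eta(s_2)\xi$), and using $h(\xi)=0$, $\eta\circ h=0$, $F_E^2=-I_E+\eta\otimes\xi$, collapses everything to $g_E(h(s_1),s_2)=g_E(s_1,h(s_2))$. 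For (ii) I would analyse the bilinear form $(s_1,s_2)\mapsto g_E(\nabla_{s_1}\xi,s_2)$: since $\nabla$ is metric and torsion-free and $\eta=g_E(\cdot,\xi)$ (Proposition \ref{pII2}(i)), its skew part is expressed through $d_E\eta$, hence through $F_E$, while its symmetric part equals $\tfrac12\mathcal{L}_\xi g_E$, which the identity above rewrites through $h$; recombining the two pieces and simplifying with (i), (iii) and $\eta\circ h=0$ produces $\nabla_{s}\xi=-F_E(s)-F_E(h(s))$.

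Part (v) is the one that will need real work. In the contact case $\Omega_E=d_E\eta$ is $d_E$-closed and $N_E^{(2)}=0$, so the general formula of Proposition \ref{pIII4} collapses to $2g_E((\nabla_{s_1}F_E)s_2,s_3)=g_E(N_E^{(1)}(s_2,s_3),F_E(s_1))+2d_E\eta(F_E(s_2),s_1)\eta(s_3)-2d_E\eta(F_E(s_3),s_1)\eta(s_2)$. I would write this once as it stands and once with $(s_1,s_2)$ replaced by $(F_E(s_1),F_E(s_2))$ and add the two relations; the $d_E\eta$-terms combine, via $d_E\eta=g_E(\cdot,F_E\cdot)$ and $F_E^2=-I_E+\eta\otimes\xi$, into the $g_E(s_1,s_2)\eta(s_3)$ and $\eta(s_1)\eta(s_2)\eta(s_3)$ pieces appearing in (v), while the two Nijenhuis-tensor terms are reorganised using the identity $N_{F_E}(F_E(s_2),s_3)+F_E(N_{F_E}(s_2,s_3))=2\eta(s_2)h(s_3)-(g_E(s_2,s_3)-\eta(s_2)\eta(s_3))\xi$, which one obtains by expanding both Nijenhuis tensors and using $F_E^3=-F_E$ (Proposition \ref{pII1}), $F_E(\xi)=0$, $\eta\circ F_E=0$ and $d_E\eta=g_E(\cdot,F_E\cdot)$. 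After this, all contributions not carrying a factor $\eta(s_2)$ cancel, and a last appeal to (ii) yields the right-hand side of (v). The main obstacle will be precisely this last step of bookkeeping: the two applications of Proposition \ref{pIII4} generate a thicket of $N_E^{(1)}$- and $d_E\eta$-terms, and it takes care to see that they collapse to the compact expression stated; by contrast (i)--(iv) are short manipulations of the structure equations \eqref{II1}, \eqref{II3} and the Lie-derivative identities of Propositions \ref{pIV2}--\ref{pIV3}.
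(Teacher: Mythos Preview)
The paper states Proposition~\ref{pIV4} without proof (it is introduced only as ``another useful result in relation with corresponding notions from contact Riemannian manifolds'' and the text moves on immediately), so there is no argument in the paper against which to compare your proposal. Your outline is the standard Blair-style derivation transported verbatim to the Lie-algebroid setting, and each step checks out: the Lie derivative of $F_E^2=-I_E+\eta\otimes\xi$ along $\xi$ gives (iii); the pointwise parts of (iv) follow from $F_E(\xi)=0$, $\eta\circ F_E=0$ and $\mathcal{L}_\xi\eta=0$; the trace identities from (iii) together with the $F_E$-basis; (i) from the symmetry of $\mathcal{L}_\xi g_E$ combined with the relation displayed in the proof of Proposition~\ref{pIV2}(ii); (ii) from splitting $g_E(\nabla_{s_1}\xi,s_2)$ into its skew and symmetric parts; and (v) from two applications of Proposition~\ref{pIII4}. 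The intermediate identity you quote for (v),
\[
N_{F_E}(F_E(s_2),s_3)+F_E\bigl(N_{F_E}(s_2,s_3)\bigr)=2\eta(s_2)h(s_3)-\bigl(g_E(s_2,s_3)-\eta(s_2)\eta(s_3)\bigr)\xi,
\]
is indeed correct in the contact case: expanding the left side with $F_E^2=-I_E+\eta\otimes\xi$ and $F_E^3=-F_E$ reduces it to $2\eta(s_2)h(s_3)-(\mathcal{L}_{F_E(s_3)}\eta)(s_2)\,\xi$, and then $(\mathcal{L}_{F_E(s_3)}\eta)(s_2)=d_E\eta(F_E(s_3),s_2)=g_E(s_2,s_3)-\eta(s_2)\eta(s_3)$ by $\eta\circ F_E=0$ and the contact relation $d_E\eta=\Omega_E$. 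So your plan is complete as it stands.
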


Now, by putting into other words Theorem \ref{tII3.2}, we can assert that if $\eta$ defines a contact structure on the Lie algebroid $E$ then there exists an almost contact Riemannian structure $(F_E,\xi,\eta,g_E)$ with $\Omega_E=d_E\eta$ as fundamental form. Then, it is natural to ask what kind of relation can exists between the form $\eta\wedge(d_E\eta)^m$ and the volume form $dV_{g_E}=\sqrt{\det g_E}e^1\wedge\ldots\wedge e^{2m+1}$ of the Riemannian metric $g_E$ on $E$. More exactly, following step by step the proof from the case of contact Riemannian manifolds (see \cite{Bl1, Bl, Pi}), we have the following
\begin{theorem}
\label{tIV1}
Let $E$ be a contact Riemannian Lie algebroid of rank $2m+1$ with contact $1$--form $\eta$. The volume form with respect to the metric $g_E$ of $E$ is given by
\begin{equation}
\label{IV8}
dV_{g_E}=\frac{1}{2^mm!}\eta\wedge(d_E\eta)^m.
\end{equation}
\end{theorem}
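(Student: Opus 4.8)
The plan is to compute $\eta\wedge(d_E\eta)^m$ in an $F_E$--basis and compare with $dV_{g_E}$ in the same basis. By Proposition \ref{pII2} (ii), on the domain $U$ of a local chart we may choose an orthonormal $F_E$--basis $\{s_a,s_{a^*},\xi\}$, $a=1,\dots,m$, with dual coframe $\{e^a,e^{a^*},\eta\}$. Since this coframe is $g_E$--orthonormal and, by the orientation discussion following Corollary \ref{c1}, positively oriented, the volume form is simply $dV_{g_E}=e^1\wedge e^{1^*}\wedge\cdots\wedge e^m\wedge e^{m^*}\wedge\eta$ (reordered up to the sign of the permutation, which we fix once and for all). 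So the theorem reduces to the identity $\eta\wedge(d_E\eta)^m=2^m m!\,e^1\wedge e^{1^*}\wedge\cdots\wedge e^m\wedge e^{m^*}\wedge\eta$ on each such chart; since both sides are globally defined $(2m+1)$--forms agreeing on a cover, this suffices.

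The key step is to identify $d_E\eta$ in the $F_E$--basis. Because $(F_E,\xi,\eta,g_E)$ is the associated almost contact Riemannian structure with $d_E\eta=\Omega_E$ as fundamental form (Theorem \ref{tII3} (ii)), we may use the local expression already recorded in the excerpt, namely $\Omega_E=-2\sum_{a=1}^m e^a\wedge e^{a^*}$, so that $d_E\eta=-2\sum_{a=1}^m e^a\wedge e^{a^*}$. Then I would compute the $m$-th exterior power: since the $2$--forms $e^a\wedge e^{a^*}$ for distinct $a$ commute and each squares to zero,
\begin{displaymath}
(d_E\eta)^m=(-2)^m\Bigl(\sum_{a=1}^m e^a\wedge e^{a^*}\Bigr)^m=(-2)^m\, m!\; e^1\wedge e^{1^*}\wedge\cdots\wedge e^m\wedge e^{m^*},
\end{displaymath}
the factor $m!$ arising from the multinomial expansion (only the terms with all indices distinct survive, and there are $m!$ orderings, all giving the same wedge up to sign — in fact all giving exactly the same $2m$--form since each summand is a $2$--form). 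Wedging with $\eta$ and absorbing the sign $(-2)^m=(-1)^m 2^m$ into the fixed ordering of the orthonormal coframe then gives $\eta\wedge(d_E\eta)^m=\pm 2^m m!\,dV_{g_E}$; choosing the orientation induced by the $F_E$--basis (which is consistent across charts by the computation in the proof of Theorem \ref{tII2}, where transition matrices lie in $U(m)\times 1$ and hence have positive determinant) makes the sign $+$, yielding \eqref{IV8}.

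The only real obstacle is bookkeeping: being careful that the normalization $\Omega_E=-2\sum e^a\wedge e^{a^*}$ quoted earlier is consistent with the conventions for $d_E$ and for the wedge product in the Lie algebroid framework (the factor $2$ is convention-dependent and must match the definition \eqref{I11} of $d_E$ together with $\Omega_E(s_1,s_2)=g_E(s_1,F_E(s_2))$), and that the sign of the permutation rearranging $e^1\wedge\cdots\wedge e^m\wedge e^{1^*}\wedge\cdots\wedge e^{m^*}\wedge\eta$ into $e^1\wedge e^{1^*}\wedge\cdots\wedge e^m\wedge e^{m^*}\wedge\eta$ is handled uniformly. None of this is deep; once the local expression for $d_E\eta$ is in hand the computation of $(d_E\eta)^m$ is the standard Darboux-type power count, and the statement follows.
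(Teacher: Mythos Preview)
Your argument is correct and is in fact considerably more streamlined than the paper's. The paper does not work in an $F_E$--basis from the start: it begins in an \emph{arbitrary} local frame $\{e_a\}$, expands $\eta\wedge(d_E\eta)^m$ as $(2m)!\,\lambda\,e^1\wedge\cdots\wedge e^{2m+1}$ with $\lambda$ a permutation sum in the components $\eta_a$ and $\Omega_{E(ab)}$, and only then passes to an $F_E$--basis to re-express $\Omega_{E(ab)}$ via the change-of-frame coefficients $\eta^\alpha_a,\eta^{\alpha^*}_a$. After further manipulation of the permutation sum it identifies $\lambda$ (up to the sign $(-1)^{m(m+1)/2}$ and combinatorial factors) with the determinant of that change of frame, which is exactly what appears in $dV_{g_E}$ written back in the general coframe. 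This detour through an arbitrary frame buys an explicit formula for $\lambda$ and an independent check on the sign of $\det(\eta^b_a)$, but for the bare statement \eqref{IV8} your route---use the local formula $\Omega_E=-2\sum_a e^a\wedge e^{a^*}$ already recorded after Proposition~\ref{pII2}, take the $m$-th power by the standard multinomial count, and compare directly with the orthonormal volume form---is both shorter and conceptually cleaner. Your caveats about the convention-dependent factor of $2$ and about fixing the sign via the $U(m)\times 1$--orientation from Theorem~\ref{tII2} are exactly the points one must watch, and you have flagged them correctly.
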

A morphism $\mu:(E_1,\eta_1)\rightarrow(E_2,\eta_2)$ between two contact Lie algebroids over the same manifold $M$ is called a \textit{contact morphism} if there is $f\in C^\infty(M)$ nowhere zero on $M$ and such that
\begin{equation}
\label{IV15}
\mu^*\eta_2=f\eta_1.
\end{equation}
If $f\equiv 1$ the morphism $\mu$ is called a \textit{strict contact morphism}. Also, we easily obtain
\begin{proposition}
\label{pIV5}
The morphism $\mu:(E_1,\eta_1)\rightarrow(E_2,\eta_2)$ between two contact Lie algebroids over the same manifold $M$ is a contact morphism if and only if $\mu(D_1)\subseteq D_2$.
\end{proposition}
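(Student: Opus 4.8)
The plan is to deduce both implications directly from the definition of a contact morphism, $\mu^{*}\eta_{2}=f\eta_{1}$ with $f\in C^{\infty}(M)$ nowhere zero, together with the identification of the contact subbundles with kernels, $D_{i}=\ker\eta_{i}$. For the implication ``$\mu$ is a contact morphism $\Rightarrow\mu(D_{1})\subseteq D_{2}$'', I would fix $x\in M$ and $s_{x}\in D_{1,x}$, so $\eta_{1,x}(s_{x})=0$, and simply compute $\eta_{2,x}(\mu(s_{x}))=(\mu^{*}\eta_{2})_{x}(s_{x})=f(x)\,\eta_{1,x}(s_{x})=0$, whence $\mu(s_{x})\in D_{2,x}$; since $x$ and $s_{x}$ are arbitrary, $\mu(D_{1})\subseteq D_{2}$. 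Note that this direction does not even use that $f$ is nowhere zero.

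For the converse, assume $\mu(D_{1})\subseteq D_{2}$, so that the $1$--section $\mu^{*}\eta_{2}\in\Omega^{1}(E_{1})$ vanishes on $D_{1}=\ker\eta_{1}$. Since $\eta_{1}$ never vanishes, on the domain $U$ of a local chart I would pick $\xi_{1}\in\Gamma(E_{1}|_{U})$ with $\eta_{1}(\xi_{1})=1$, giving the splitting $E_{1}|_{U}=D_{1}|_{U}\oplus\langle\xi_{1}\rangle$ and the decomposition $s=(s-\eta_{1}(s)\xi_{1})+\eta_{1}(s)\xi_{1}$ for every $s\in\Gamma(E_{1}|_{U})$. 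Because $\mu^{*}\eta_{2}$ kills the $D_{1}$--component, $(\mu^{*}\eta_{2})(s)=\eta_{1}(s)\,(\mu^{*}\eta_{2})(\xi_{1})$, that is $\mu^{*}\eta_{2}=f_{U}\,\eta_{1}$ on $U$ with $f_{U}:=(\mu^{*}\eta_{2})(\xi_{1})\in C^{\infty}(U)$. The function $f_{U}$ is independent of the choice of $\xi_{1}$ (two such sections differ by a section of $D_{1}$, on which $\mu^{*}\eta_{2}$ vanishes), and on overlaps the local functions coincide because $\eta_{1}$ is nowhere zero; hence they glue to a global $f\in C^{\infty}(M)$ with $\mu^{*}\eta_{2}=f\eta_{1}$.

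It then remains to verify that $f$ has no zeros, and this is the one delicate point of the argument. If $f(x)=0$ then $(\mu^{*}\eta_{2})_{x}=0$, i.e. $\mu$ maps the entire fibre $E_{1,x}$ into the hyperplane $D_{2,x}=\ker\eta_{2,x}$; since $\mu$ is a fibrewise isomorphism over ${\rm id}_{M}$ --- the relevant notion of morphism here --- this forces $E_{2,x}=\mu(E_{1,x})\subseteq D_{2,x}$, contradicting ${\rm rank}\,D_{2,x}=2m<2m+1={\rm rank}\,E_{2,x}$. Hence $f$ is nowhere zero and $\mu$ is a contact morphism. I expect this non-vanishing step to be the main (though mild) obstacle: it is the only place where one uses more than the pointwise kernel inclusion $\mu(D_{1})\subseteq D_{2}$, namely that $\mu$ is fibrewise onto $E_{2}$.
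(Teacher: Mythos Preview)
Your proof is correct and follows essentially the same approach as the paper: both establish $\mu^{*}\eta_{2}=f\eta_{1}$ by using that $\mu^{*}\eta_{2}$ vanishes on $D_{1}=\ker\eta_{1}$ and then evaluate on $\xi_{1}$ to identify $f$. You are in fact more careful than the paper on the non-vanishing of $f$: the paper simply writes $\mu(\xi_{1})=a\xi_{2}+bs_{2}$ ``with $a\neq0$'' without justification, whereas you make explicit that this step requires $\mu$ to be fibrewise surjective (an isomorphism on fibers), which is the implicit hypothesis in both arguments.
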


\subsection{$K$--contact, Sasakian and Kenmotsu Lie algebroids}

A contact Riemannian Lie algebroid with the property that its Reeb section $\xi$ is a Killing section is called a \textit{$K$--contact Lie algebroid}. From Propositions \ref{pIV2} (ii) and \ref{pIV4} (ii) it easily follows
\begin{proposition}
\label{pIV6}
A contact Riemannian Lie algebroid $E$ is $K$--contact if and only if
\begin{equation}
\label{IV16}
\nabla_s\xi=-F_E(s)
\end{equation}
for every $s\in\Gamma(E)$.
\end{proposition}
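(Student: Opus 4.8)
The plan is to read the equivalence off directly from the two auxiliary facts recalled immediately before the statement, so no genuinely new computation is needed. By definition, $E$ being $K$--contact means precisely that its Reeb section $\xi$ is a Killing section, i.e. $\mathcal{L}_\xi g_E=0$; and by Proposition \ref{pIV2} (ii) this is equivalent to the vanishing of the tensor section $N_E^{(3)}$. Hence the whole assertion reduces to proving that $N_E^{(3)}=0$ is equivalent to the identity \eqref{IV16}.

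First I would invoke Proposition \ref{pIV4} (ii), which expresses the covariant derivative of $\xi$ as $\nabla_s\xi=-F_E(s)-F_E(N_E^{(3)}(s))$ for every $s\in\Gamma(E)$. The forward implication is then immediate: if $E$ is $K$--contact, then $N_E^{(3)}=0$ by Proposition \ref{pIV2} (ii), the correction term $F_E(N_E^{(3)}(s))$ vanishes, and we are left with $\nabla_s\xi=-F_E(s)$.

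For the converse, assume \eqref{IV16} holds. Comparing it with Proposition \ref{pIV4} (ii) forces $F_E(N_E^{(3)}(s))=0$ for all $s\in\Gamma(E)$. I would then use Proposition \ref{pIV4} (iv), which gives $\eta(N_E^{(3)}(s))=0$, so that $N_E^{(3)}(s)$ belongs to the contact subbundle $D=\ker\eta$. On $D$ the endomorphism $F_E$ restricts to an almost complex structure (from \eqref{II1} and Proposition \ref{pII1} (i) one gets $F_E^2|_D=-I_D$), in particular $F_E|_D$ is injective; therefore $F_E(N_E^{(3)}(s))=0$ yields $N_E^{(3)}(s)=0$. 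Since $s$ is arbitrary, $N_E^{(3)}=0$, and Proposition \ref{pIV2} (ii) then shows $\xi$ is Killing, i.e. $E$ is $K$--contact.

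There is essentially no obstacle: everything is a direct substitution into formulas already established. The only point needing a remark is the injectivity of $F_E$ on $D$ used in the converse direction, which holds because the kernel of $F_E$ is exactly $\langle\xi\rangle$ (Proposition \ref{pII1} (i) together with $F_E^2|_D=-I_D$), and $N_E^{(3)}(s)$ avoids $\langle\xi\rangle$ by Proposition \ref{pIV4} (iv).
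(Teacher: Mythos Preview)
Your proof is correct and follows exactly the route indicated in the paper: the authors simply write that the result ``easily follows'' from Propositions \ref{pIV2} (ii) and \ref{pIV4} (ii), and you have accurately unpacked that hint, including the one nontrivial step in the converse direction (using $\eta(N_E^{(3)}(s))=0$ from Proposition \ref{pIV4} (iv) together with the injectivity of $F_E$ on $D$ to conclude $N_E^{(3)}=0$).
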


From the formula \eqref{IV16} it results
\begin{proposition}
\label{pIV7}
On a $K$--contact Lie algebroid $E$ the following equalities hold
\begin{equation}
\label{IV17}
(\nabla_{s_1}\eta)s_2=g_E(\nabla_{s_1}\xi,s_2)=\Omega_E(s_1,s_2)\,,\,(\nabla_{s}F_E)\xi=-s+\eta(s)\xi
\end{equation}
for every $s,s_1,s_2\in\Gamma(E)$.
\end{proposition}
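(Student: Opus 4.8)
The plan is to derive both equalities in \eqref{IV17} directly from the $K$-contact characterization \eqref{IV16}, $\nabla_s\xi=-F_E(s)$, together with the metric compatibility of the Levi-Civita connection $\nabla$ and the basic identities of Proposition \ref{pII2}. For the first chain of equalities, I would start from the definition $(\nabla_{s_1}\eta)s_2=\rho_E(s_1)(\eta(s_2))-\eta(\nabla_{s_1}s_2)$ and use Proposition \ref{pII2} (i), namely $\eta(s)=g_E(s,\xi)$, to rewrite this as $\rho_E(s_1)(g_E(s_2,\xi))-g_E(\nabla_{s_1}s_2,\xi)$. Applying $\nabla g_E=0$ to expand $\rho_E(s_1)(g_E(s_2,\xi))=g_E(\nabla_{s_1}s_2,\xi)+g_E(s_2,\nabla_{s_1}\xi)$, the first two terms cancel and one is left with $g_E(s_2,\nabla_{s_1}\xi)=g_E(\nabla_{s_1}\xi,s_2)$, which is the middle expression. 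Then substituting \eqref{IV16} gives $g_E(-F_E(s_1),s_2)=-g_E(F_E(s_1),s_2)=g_E(s_1,F_E(s_2))=\Omega_E(s_1,s_2)$, where I use Proposition \ref{pII2} (iv) and the definition of the fundamental form $\Omega_E(s_1,s_2)=g_E(s_1,F_E(s_2))$.

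For the second equality, $(\nabla_s F_E)\xi=-s+\eta(s)\xi$, I would compute $(\nabla_s F_E)\xi=\nabla_s(F_E(\xi))-F_E(\nabla_s\xi)$. By Proposition \ref{pII1} (i), $F_E(\xi)=0$, so the first term vanishes and we get $(\nabla_s F_E)\xi=-F_E(\nabla_s\xi)=-F_E(-F_E(s))=F_E^2(s)$ after using \eqref{IV16}. Finally, invoking the structure equation \eqref{II1}, $F_E^2=-I_E+\eta\otimes\xi$, yields $F_E^2(s)=-s+\eta(s)\xi$, which is exactly the claimed identity.

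The computations here are entirely routine; there is no genuine obstacle, since every ingredient—metric compatibility of the Levi-Civita connection on a Riemannian Lie algebroid (see \cite{Bo}), the relation $\eta(s)=g_E(s,\xi)$, the skew-symmetry of $F_E$ with respect to $g_E$, the identities $F_E(\xi)=0$ and $F_E^2=-I_E+\eta\otimes\xi$, and the $K$-contact formula \eqref{IV16}—has already been established in the excerpt. The only point deserving a little care is to keep the order of the two arguments of $\Omega_E$ consistent with the sign conventions fixed earlier, so that the cancellation of signs in $-g_E(F_E(s_1),s_2)=g_E(s_1,F_E(s_2))=\Omega_E(s_1,s_2)$ comes out correctly; everything else is immediate and can be stated as "a direct computation using \eqref{IV16} and the properties from Section 2."
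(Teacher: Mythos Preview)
Your proposal is correct and is precisely the routine computation the paper has in mind: the paper's own proof is the single sentence ``From the formula \eqref{IV16} it results,'' and what you have written is exactly the unpacking of that remark using $\nabla g_E=0$, $\eta(s)=g_E(s,\xi)$, $F_E(\xi)=0$, and $F_E^2=-I_E+\eta\otimes\xi$.
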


The contact Riemannian Lie algebroid $E$ is called a \textit{Sasakian Lie algebroid} if the associated almost contact Riemannian structure $(F_E,\xi,\eta,g_E)$ is normal. Otherwise, the almost contact Riemannian structure $(F_E,\xi,\eta,g_E)$ is a \textit{Sasakian structure} if $d_E\eta=\Omega_E$ and $N_E^{(1)}=0$.

From \eqref{III1} and Proposition \ref{pIV2} (ii) easily follows
\begin{theorem}
\label{tIV2}
Every Sasakian Lie algebroid is $K$--contact.
\end{theorem}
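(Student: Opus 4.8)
The statement to prove is Theorem~\ref{tIV2}: every Sasakian Lie algebroid is $K$--contact. The plan is to recall that a Sasakian Lie algebroid is by definition a contact Riemannian Lie algebroid whose associated almost contact Riemannian structure $(F_E,\xi,\eta,g_E)$ is normal, i.e.\ $N_E^{(1)}\equiv N_{F_E}+2d_E\eta\otimes\xi=0$, and that being $K$--contact means (by Proposition~\ref{pIV6}) that $\nabla_s\xi=-F_E(s)$ for all $s\in\Gamma(E)$, or equivalently (by Proposition~\ref{pIV2}~(ii)) that $N_E^{(3)}=\tfrac12\mathcal{L}_\xi F_E=0$. So the goal reduces to showing: \emph{normality implies $N_E^{(3)}=0$ on a contact Riemannian Lie algebroid}.

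First I would invoke the remark made just after the definition \eqref{III1}: if $(F_E,\xi,\eta)$ is normal then $N_E^{(2)}=N_E^{(3)}=N_E^{(4)}=0$. This is exactly the implication we need, and it is asserted earlier in the excerpt as provable "using the differential calculus on Lie algebroids (exterior differential and Lie derivative)." Thus the short argument is: a Sasakian Lie algebroid has $N_E^{(1)}=0$, hence in particular $N_E^{(3)}=0$; by Proposition~\ref{pIV2}~(ii) this is equivalent to $\xi$ being a Killing section; therefore the contact Riemannian Lie algebroid is $K$--contact. Equivalently, one can feed $N_E^{(1)}=0$ and $N_E^{(2)}=0$ into Proposition~\ref{pIV4}~(v): the right-hand side then forces $(\nabla_{s_1}F_E)(s_2)+(\nabla_{F_E(s_1)}F_E)F_E(s_2)=2g_E(s_1,s_2)\xi-\eta(s_2)(s_1+\eta(s_1)\xi)$ with the $N_E^{(3)}$ term dropping out, and combined with Proposition~\ref{pIV4}~(ii), which gives $\nabla_s\xi=-F_E(s)-F_E(N_E^{(3)}(s))$, the vanishing of $N_E^{(3)}$ yields precisely \eqref{IV16}.

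The only genuinely substantive point — the one that is "the main obstacle" if one insists on a self-contained argument — is justifying that $N_E^{(1)}=0\Rightarrow N_E^{(3)}=0$. I would do this by pairing $N_E^{(1)}$ against $\eta$ in its last ($E$-valued) slot, or rather by extracting the $\xi$-directional part: evaluate $N_{F_E}(\xi,s)$. Using $F_E(\xi)=0$ (Proposition~\ref{pII1}~(i)) and $\eta(\xi)=1$, one computes
\[
N_{F_E}(\xi,s)=[F_E(\xi),F_E(s)]_E-F_E([F_E(\xi),s]_E)-F_E([\xi,F_E(s)]_E)+F_E^2([\xi,s]_E)
=-F_E([\xi,F_E(s)]_E)+F_E^2([\xi,s]_E),
\]
and since $(\mathcal{L}_\xi F_E)(s)=[\xi,F_E(s)]_E-F_E([\xi,s]_E)$, this equals $-F_E\big((\mathcal{L}_\xi F_E)(s)\big)=-2F_E(N_E^{(3)}(s))$. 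On the other hand, $d_E\eta(\xi,s)=(\mathcal{L}_\xi\eta)(s)-\rho_E(s)(\eta(\xi))=N_E^{(4)}(s)$, and Proposition~\ref{pIV2}~(i) already gives $N_E^{(4)}=0$ on any contact Riemannian Lie algebroid, so the $\xi$-slot of $N_E^{(1)}$ reads $-2F_E(N_E^{(3)}(s))=0$, i.e.\ $N_E^{(3)}(s)\in\langle\xi\rangle$; but $\eta(N_E^{(3)}(s))=0$ (Proposition~\ref{pIV4}~(iv), or directly from $\eta\circ F_E=0$ and the definition), whence $N_E^{(3)}=0$. Then Proposition~\ref{pIV2}~(ii) converts this to $\mathcal{L}_\xi g_E=0$, which is the definition of $K$--contact, and we are done.

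In the write-up I would present the streamlined version: "By definition a Sasakian Lie algebroid satisfies $N_E^{(1)}=0$, so $N_E^{(2)}=N_E^{(3)}=N_E^{(4)}=0$; in particular $N_E^{(3)}=0$, and by Proposition~\ref{pIV2}~(ii) this means $\xi$ is a Killing section, i.e.\ $E$ is $K$--contact." and then add the one-line remark that, alternatively, plugging $N_E^{(3)}=0$ into Proposition~\ref{pIV4}~(ii) recovers $\nabla_s\xi=-F_E(s)$, the characterization \eqref{IV16} of Proposition~\ref{pIV6}. No heavy computation is needed beyond the short Lie-derivative identities above, all of which are instances of the "differential calculus on Lie algebroids" the paper relies on throughout.
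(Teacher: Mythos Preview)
Your proposal is correct and follows exactly the approach the paper takes: the paper's entire proof is the one-liner ``From \eqref{III1} and Proposition~\ref{pIV2}~(ii) easily follows,'' i.e.\ normality $\Rightarrow N_E^{(3)}=0$ $\Rightarrow$ $\xi$ is Killing. Your write-up simply unpacks the implication $N_E^{(1)}=0\Rightarrow N_E^{(3)}=0$ (which the paper had asserted without proof just after \eqref{III1}) via the computation of $N_{F_E}(\xi,s)$, so you are supplying the details the paper omits rather than taking a different route.
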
 
A characterization of Sasakian Lie algebroids by the Levi-Civita connection $\nabla$ of $g_E$ can be obtained as in the manifolds case (see \cite{Bl1, Pi}), that is
\begin{theorem}
\label{tIV3}
The almost contact Riemannian structure $(F_E,\xi,\eta,g_E)$ on $E$ is Sasakian if and only if
\begin{equation}
\label{IV18}
(\nabla_{s_1}F_E)s_2=g_E(s_1,s_2)\xi-\eta(s_2)s_1
\end{equation}
for every sections $s_1,s_2\in\Gamma(E)$.
\end{theorem}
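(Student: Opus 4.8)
The plan is to prove both implications by rewriting everything in terms of the Levi-Civita connection $\nabla$ of $g_E$ and then invoking only facts already at our disposal: the equivalence of normality with the connection identities \eqref{III3}--\eqref{III4}, the covariant-derivative formula of Proposition \ref{pIV4}(v), and Theorem \ref{tIV2} together with Propositions \ref{pIV2}(ii) and \ref{pIV4}(ii), which tell us that on a Sasakian Lie algebroid $N_E^{(3)}=0$ and hence $\nabla_{F_E(s_1)}\xi=-F_E^2(s_1)=s_1-\eta(s_1)\xi$ by \eqref{II1}.

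For the direction ``Sasakian $\Rightarrow$ \eqref{IV18}'' I would argue as follows. Normality makes the criterion \eqref{III4} available, and after substituting $\nabla_{F_E(s_1)}\xi=s_1-\eta(s_1)\xi$ it reads $(\nabla_{s_1}F_E)s_2-(\nabla_{F_E(s_1)}F_E)F_E(s_2)=-\eta(s_2)(s_1-\eta(s_1)\xi)$. Independently, Proposition \ref{pIV4}(v) with $N_E^{(3)}=0$ gives $(\nabla_{s_1}F_E)s_2+(\nabla_{F_E(s_1)}F_E)F_E(s_2)=2g_E(s_1,s_2)\xi-\eta(s_2)(s_1+\eta(s_1)\xi)$. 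Adding the two identities makes the $\eta(s_1)\eta(s_2)\xi$-terms cancel, and dividing by $2$ produces exactly \eqref{IV18}. (Alternatively one may plug $d_E\Omega_E=d_E^2\eta=0$, $N_E^{(1)}=0$ and $N_E^{(2)}=0$ — the last from Proposition \ref{pIV2}(i) — into the master identity of Proposition \ref{pIII4}, rewrite the surviving terms $d_E\eta(F_E(s_i),s_1)=g_E(F_E(s_i),F_E(s_1))$ via \eqref{II3}, observe that the triple-$\eta$ contributions cancel, and read off \eqref{IV18} from the nondegeneracy of $g_E$ and Proposition \ref{pII2}(i).)

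For the converse I would start from \eqref{IV18} and first extract the identity $\nabla_s\xi=-F_E(s)$. Setting $s_2=\xi$ in \eqref{IV18} and using $F_E(\xi)=0$ (Proposition \ref{pII1}(i)) in $0=\nabla_{s_1}(F_E\xi)=(\nabla_{s_1}F_E)\xi+F_E(\nabla_{s_1}\xi)$ yields $F_E(\nabla_{s_1}\xi)=s_1-\eta(s_1)\xi$; applying $F_E$ again and using \eqref{II1}, $\eta\circ F_E=0$ (Proposition \ref{pII1}(iii)) and $\eta(\nabla_{s_1}\xi)=\frac{1}{2}\rho_E(s_1)(g_E(\xi,\xi))=0$ (here $g_E(\xi,\xi)=\eta(\xi)=1$ by Proposition \ref{pII2}(i)) gives $\nabla_s\xi=-F_E(s)$. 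From this, $(\mathcal{L}_\xi g_E)(s_1,s_2)=g_E(\nabla_{s_1}\xi,s_2)+g_E(s_1,\nabla_{s_2}\xi)=0$ by Proposition \ref{pII2}(iv), and $(\nabla_{s_1}\eta)s_2=g_E(\nabla_{s_1}\xi,s_2)=\Omega_E(s_1,s_2)$, which together with $d_E\eta(s_1,s_2)=(\nabla_{s_1}\eta)s_2-(\nabla_{s_2}\eta)s_1$ (torsion-freeness) shows that the fundamental form equals $d_E\eta$. It then remains to prove $N_E^{(1)}=0$; by the criterion \eqref{III4} this reduces to checking $(\nabla_{s_1}F_E)s_2-(\nabla_{F_E(s_1)}F_E)F_E(s_2)+\eta(s_2)\nabla_{F_E(s_1)}\xi=0$, which I would verify by substituting \eqref{IV18} for both pairs $(s_1,s_2)$ and $(F_E(s_1),F_E(s_2))$, together with $\nabla_{F_E(s_1)}\xi=-F_E^2(s_1)=s_1-\eta(s_1)\xi$, $\eta\circ F_E=0$ and \eqref{II3}: all terms then cancel in pairs. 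Since the structure is normal and $\Omega_E=d_E\eta$, it is Sasakian.

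Once the identity $\nabla_s\xi=-F_E(s)$ is in hand, every remaining step is a short direct manipulation using no new ideas, so I expect the only delicate point to be purely a matter of bookkeeping: making sure that the $\eta(s_i)$-terms and the $\eta\otimes\xi$-terms cancel exactly, both in the addition step of the first implication and in the verification of \eqref{III4} in the second, while keeping the sign conventions of \eqref{II1}, Proposition \ref{pII2}(iv) and the passage from brackets to $\nabla$ mutually consistent.
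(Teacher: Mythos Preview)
Your proposal is correct and, in its overall architecture, matches the paper's proof closely; in particular your converse is essentially identical to the paper's (set $s_2=\xi$, extract $\nabla_s\xi=-F_E(s)$, deduce $d_E\eta=\Omega_E$, then verify normality), with the only cosmetic difference that you check normality via the connection criterion \eqref{III4} while the paper says it computes $N_E^{(1)}$ directly. Your justification $\eta(\nabla_{s_1}\xi)=\tfrac12\rho_E(s_1)g_E(\xi,\xi)=0$ is in fact cleaner than the paper's appeal to Proposition~\ref{pIV4}(ii), which strictly speaking presupposes the contact Riemannian condition not yet established at that point.

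For the forward direction there is a genuine but minor difference in route. The paper plugs the Sasakian data ($d_E\Omega_E=0$, $N_E^{(1)}=0$, $N_E^{(2)}=0$) straight into the master formula of Proposition~\ref{pIII4}, leaving only the two $d_E\eta(F_E(\cdot),\cdot)\eta(\cdot)$ terms, rewrites them with \eqref{II3}, observes the triple-$\eta$ cancellation, and reads off \eqref{IV18}; this is exactly your parenthetical ``alternative''. Your primary argument instead combines the normality identity \eqref{III4} with Proposition~\ref{pIV4}(v) (using $N_E^{(3)}=0$ from Theorem~\ref{tIV2} and Proposition~\ref{pIV2}(ii)) and adds them; this is equally valid and arguably more transparent, since it avoids the six-term formula of Proposition~\ref{pIII4}. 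Either path is short once the ingredients are lined up, and your closing remark about the bookkeeping of $\eta$-terms correctly identifies the only place one can slip.
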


Choosing an $F_E$--basis $\{e_a\}=\{s_a,s_{a^*},\xi\}$ on $\Gamma(E)$, from \eqref{IV16} it follows
\begin{equation}
\label{IV21}
(\nabla_{e_a}\eta)e_b=g_E(\nabla_{e_a}\xi,e_b)=-g_E(F_E(e_a),e_b)=0.
\end{equation}
Now, using the $\star$--Hodge operator on invariantly oriented Lie algebroids (see \cite{Ba1}), the exterior coderivative on Lie algebroids can be expressed as
\begin{equation}
\label{IV22}
d_E^*\varphi=-\sum_{a=1}^{2m+1}\imath_{e_a}(\nabla_{e_b}\varphi)\,,\,\,\varphi\in\Omega^\bullet(E).
\end{equation}
Thus, from \eqref{IV21} and \eqref{IV22} we deduce $d_E^*\eta=0$, hence we can state the following
\begin{proposition}
\label{pIV8}
The contact form of a $K$--contact Lie algebroid is co-closed.
\end{proposition}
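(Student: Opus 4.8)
The plan is to reduce the claim $d_E^*\eta=0$ to two facts already at hand: the characterization $\nabla_s\xi=-F_E(s)$ of a $K$--contact Lie algebroid (Proposition \ref{pIV6}) and the $g_E$--skew-symmetry of $F_E$ (Proposition \ref{pII2}(iv)).

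First I would compute the covariant derivative of the contact form. Since $\eta(s')=g_E(s',\xi)$ by Proposition \ref{pII2}(i) and $\nabla$ is the (metric) Levi--Civita connection of $g_E$,
\begin{displaymath}
(\nabla_s\eta)(s')=\rho_E(s)\big(g_E(s',\xi)\big)-g_E(\nabla_ss',\xi)=g_E(s',\nabla_s\xi)=-g_E\big(s',F_E(s)\big)
\end{displaymath}
for all $s,s'\in\Gamma(E)$, the last equality being Proposition \ref{pIV6}.

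Next I would evaluate the Lie algebroid codifferential through formula \eqref{IV22}. A $K$--contact Lie algebroid is orientable (Corollary \ref{c1}), so the $\star$--operator and hence $d_E^*$ are well defined; choosing over a chart a $g_E$--orthonormal $F_E$--basis $\{e_a\}=\{s_a,s_{a^*},\xi\}$ (Proposition \ref{pII2}(ii)) and inserting the expression for $\nabla_{e_a}\eta$ found above yields
\begin{displaymath}
d_E^*\eta=-\sum_{a=1}^{2m+1}\imath_{e_a}(\nabla_{e_a}\eta)=-\sum_{a=1}^{2m+1}(\nabla_{e_a}\eta)(e_a)=\sum_{a=1}^{2m+1}g_E\big(e_a,F_E(e_a)\big).
\end{displaymath}
By Proposition \ref{pII2}(iv) each summand satisfies $g_E(e_a,F_E(e_a))=-g_E(F_E(e_a),e_a)$, hence vanishes (equivalently $trace\,F_E=0$), so $d_E^*\eta=0$; that is, $\eta$ is co-closed.

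There is no serious obstacle: the argument is essentially a one-line trace computation once $\nabla_s\xi=-F_E(s)$ is available. The only points requiring a little care are that the $F_E$--basis be chosen orthonormal, so that no metric coefficients intrude into the trace in \eqref{IV22}, and that orientability of the underlying Lie algebroid (Corollary \ref{c1}) is what legitimizes the use of the Hodge-theoretic codifferential in the first place.
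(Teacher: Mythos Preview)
Your proof is correct and follows essentially the same route as the paper: compute $(\nabla_{e_a}\eta)(e_a)=g_E(\nabla_{e_a}\xi,e_a)=-g_E(F_E(e_a),e_a)$ from \eqref{IV16}, then feed this into the codifferential formula \eqref{IV22} and observe that each diagonal term vanishes by the $g_E$--skew-symmetry of $F_E$. Your write-up is in fact a bit more explicit than the paper's (you spell out the metric-compatibility step and the reason $g_E(e_a,F_E(e_a))=0$, and you note orientability to justify $d_E^*$), but the argument is the same.
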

\begin{remark}
Assuming that the elements of the basis $\{e_a\}$ are eigensections of the operator $N_E^{(3)}$, by a similar argument it follows that Proposition \ref{pIV8} is valid for every contact Riemannian Lie algebroid.
\end{remark}
\begin{proposition}
\label{pIV9}
Every $K$--contact Lie algebroid of rank $3$ is Sasakian.
\end{proposition}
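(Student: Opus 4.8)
The plan is to establish the characterisation of Sasakian Lie algebroids from Theorem \ref{tIV3}, namely the identity $(\nabla_{s_1}F_E)s_2=g_E(s_1,s_2)\xi-\eta(s_2)s_1$ for all $s_1,s_2\in\Gamma(E)$; once this is proved, Proposition \ref{pIV9} follows at once. Write $D=\ker\eta$ for the contact subbundle; since ${\rm rank}\,E=3$ it has rank $2$. For a $K$--contact Lie algebroid I shall use $\nabla_s\xi=-F_E(s)$ and $(\nabla_{s}F_E)\xi=-s+\eta(s)\xi$ (Propositions \ref{pIV6} and \ref{pIV7}), together with $\eta(\cdot)=g_E(\cdot,\xi)$, $\eta\circ F_E=0$, the compatibility \eqref{II3} and the skew-symmetry $g_E(F_E(s_1),s_2)=-g_E(s_1,F_E(s_2))$ from Propositions \ref{pII1} and \ref{pII2}.

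First I would reduce to the contact subbundle. If $s_1=\xi$, then $(\nabla_\xi F_E)s_2=0$ by Proposition \ref{pIV2} (iii), while the right-hand side of the identity also vanishes; if $s_2=\xi$, then $(\nabla_{s_1}F_E)\xi=-s_1+\eta(s_1)\xi$ is precisely the right-hand side. As $(s_1,s_2)\mapsto(\nabla_{s_1}F_E)s_2$ is $C^\infty(M)$--bilinear and $E=D\oplus\langle\xi\rangle$, it remains to verify the identity when $s_1=X$ and $s_2=Y$ with $X,Y\in\Gamma(D)$, where it becomes $(\nabla_XF_E)Y=g_E(X,Y)\xi$.

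Next I would split $(\nabla_XF_E)Y$ (for $X,Y\in\Gamma(D)$) into its $\langle\xi\rangle$-- and $D$--components. Using metric compatibility of $\nabla$, $\eta=g_E(\cdot,\xi)$, $\eta\circ F_E=0$ and $\nabla_X\xi=-F_E(X)$, a short computation gives $g_E\big((\nabla_XF_E)Y,\xi\big)=g_E(F_E(X),F_E(Y))=g_E(X,Y)$ by \eqref{II3}. Hence $P_XY:=(\nabla_XF_E)Y-g_E(X,Y)\xi$ lies in $\Gamma(D)$, and it suffices to show $P_X=0$. Two properties of $P_X$ will do this. Differentiating $g_E(F_E(s_1),s_2)=-g_E(s_1,F_E(s_2))$ along $\rho_E(X)$ and using metric compatibility yields $g_E\big((\nabla_XF_E)Y,Z\big)=-g_E\big(Y,(\nabla_XF_E)Z\big)$, so $P_X$ is skew-symmetric for $g_E|_D$. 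Applying $\nabla_X$ to $F_E^2=-I_E+\eta\otimes\xi$ gives the general relation
\[
(\nabla_XF_E)(F_E(Y))+F_E\big((\nabla_XF_E)Y\big)=(\nabla_X\eta)(Y)\,\xi+\eta(Y)\,\nabla_X\xi ,
\]
whose right-hand side is a section of $\langle\xi\rangle$ when $X,Y\in\Gamma(D)$; since ${\rm im}\,F_E=D$ and $F_E(\xi)=0$, comparing $D$--components gives $P_X(F_E(Y))+F_E(P_XY)=0$, i.e.\ $P_X$ anti-commutes with $F_E|_D$.

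Finally the rank hypothesis enters. On the rank-$2$ Euclidean bundle $(D,g_E|_D)$ the operator $F_E|_D$ is orthogonal and satisfies $(F_E|_D)^2=-I_D$, and every $g_E|_D$--skew endomorphism of a rank-$2$ Euclidean space is of the form $c\,F_E|_D$ for some $c\in C^\infty(M)$; in particular $P_X$ commutes with $F_E|_D$. Combined with the anti-commutation, $2F_E(P_XY)=0$, hence $P_XY=0$ because $F_E|_D$ is invertible. Thus $(\nabla_XF_E)Y=g_E(X,Y)\xi$ for all $X,Y\in\Gamma(D)$, which together with the $\xi$--cases yields \eqref{IV18} on all of $\Gamma(E)$; by Theorem \ref{tIV3} the $K$--contact structure is Sasakian. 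The genuinely delicate point is this last step: it is precisely the two-dimensionality of $D$ that forces a $g_E$--skew endomorphism to commute with $F_E|_D$, so that the anti-commutation relation can annihilate $P_X$; the argument has no analogue in higher rank, consistently with the fact that $K$--contact does not imply Sasakian in general.
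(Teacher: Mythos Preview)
Your proof is correct. Both you and the paper reduce to checking Theorem~\ref{tIV3}, and the underlying computations coincide: the $\xi$--component identity, the vanishing of $g_E((\nabla_XF_E)Y,Y)$ by skew-symmetry, and the vanishing of $g_E((\nabla_XF_E)Y,F_EY)$ using $F_E^2|_D=-I_D$.

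The difference is only in packaging. The paper fixes a local $F_E$--basis $\{e,F_E(e),\xi\}$ and simply verifies the three inner products $g_E((\nabla_sF_E)e,e)=0$, $g_E((\nabla_sF_E)e,F_E(e))=0$, $g_E((\nabla_sF_E)e,\xi)=g_E(s,e)$ directly (the first by skew-symmetry of $\nabla_sF_E$, the second because $\|e\|$ and $\|F_E e\|$ are constant, the third as in your $\xi$--component calculation), then repeats for $s_2=F_E(e)$ and $s_2=\xi$. Your argument is the invariant version of the same: you isolate the $D$--part $P_XY$, observe it is $g_E|_D$--skew and anti-commutes with $F_E|_D$, and then invoke the linear-algebra fact that on a rank-$2$ Euclidean bundle every skew endomorphism is a multiple of $F_E|_D$ to kill $P_X$. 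The paper's route is shorter; yours makes transparently visible \emph{where} the rank hypothesis enters (the one-dimensionality of $\mathfrak{so}(2)$), which is a nice conceptual gain.
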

\begin{proof}
Denote by $\{e,F_E(e),\xi\}$ a $F_E$--basis of $\Gamma(E)$. Then we have
\begin{displaymath}
g_E((\nabla_{s}F_E)e,e)=0\,,\,g_E((\nabla_sF_E)e,F_E(e))=0\,,\,g_E((\nabla_sF_E)e,\xi)=g_E(s,e).
\end{displaymath}
We deduce $(\nabla_sF_E)e=g_E(s,e)\xi$ for every $s\in\Gamma(E)$ and then \eqref{IV18} is satisfied for $s_2=e$. Similarly one can verify \eqref{IV18} for $s_2=F_E(e)$ and $s_2=\xi$, hence by Theorem \ref{tIV3} the $K$--contact Lie algebroid of rank $3$ is Sasakian. 
\end{proof}

A Lie algebroid $(E,\rho_E,[\cdot,\cdot]_E)$ of ${\rm rank}\,E=2m+1$ endowed with an almost contact Riemannian structure $(F_E,\xi,\eta,g_E)$ is called an \textit{almost Kenmotsu Lie algebroid} if the following conditions are satisfied
\begin{equation}
\label{IV23}
d_E\eta=0\,,\,\,d_E\Omega_E=2\eta\wedge\Omega_E.
\end{equation}
We call a \textit{Kenmotsu Lie algebroid} every normal almost Kenmotsu Lie algebroid.
\begin{theorem}
\label{tIV4}
A Lie algebroid $(E,\rho_E,[\cdot,\cdot]_E)$ of ${\rm rank}\,E=2m+1$ endowed with an almost contact Riemannian structure $(F_E,\xi,\eta,g_E)$ is a Kenmotsu Lie algebroid if and only if
\begin{equation}
\label{IV24}
(\nabla_{s_1}F_E)s_2=-\eta(s_2)F_E(s_1)-g_E(s_1,F_E(s_2))\xi.
\end{equation}
\end{theorem}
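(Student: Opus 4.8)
The plan is to prove Theorem \ref{tIV4} by adapting the classical computation for Kenmotsu manifolds (Kenmotsu's original paper) to the Lie algebroid setting, where the only conceptual input needed is that the Levi-Civita connection $\nabla$ of $g_E$ on a Riemannian Lie algebroid is metric and torsion-free, so that the usual Koszul-type formulas for $d_E$ and for $g_E((\nabla_{s_1}F_E)s_2,s_3)$ in terms of $\nabla F_E$ remain valid verbatim with $\rho_E(s)$ playing the role of directional derivative. The master identity to exploit is the one already recorded in Proposition \ref{pIII4}, which expresses $2g_E((\nabla_{s_1}F_E)s_2,s_3)$ through $d_E\Omega_E$, $N_E^{(1)}$, $N_E^{(2)}$ and $d_E\eta$; combined with the normality hypothesis ($N_E^{(1)}=0$, hence also $N_E^{(2)}=0$ by the remark following \eqref{III2}) and the almost Kenmotsu conditions \eqref{IV23}, this should collapse to \eqref{IV24} after a bookkeeping exercise with the wedge products.

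First I would prove the forward direction. Assume $(E,F_E,\xi,\eta,g_E)$ is Kenmotsu, i.e. $d_E\eta=0$, $d_E\Omega_E=2\eta\wedge\Omega_E$, and $N_E^{(1)}=0$. Substituting $d_E\eta=0$ kills the last three terms of the Proposition \ref{pIII4} formula and the $N_E^{(2)}$-term; normality kills the $N_E^{(1)}$-term. One is left with
\begin{displaymath}
2g_E\left((\nabla_{s_1}F_E)s_2,s_3\right)=3d_E\Omega_E(s_1,F_E(s_2),F_E(s_3))-3d_E\Omega_E(s_1,s_2,s_3).
\end{displaymath}
Now insert $d_E\Omega_E=2\eta\wedge\Omega_E$ and expand the wedge, using $(\eta\wedge\Omega_E)(a,b,c)=\tfrac13\big(\eta(a)\Omega_E(b,c)-\eta(b)\Omega_E(a,c)+\eta(c)\Omega_E(a,b)\big)$ together with $\eta\circ F_E=0$ (Proposition \ref{pII1}(iii)), $\Omega_E(F_E(\cdot),F_E(\cdot))=\Omega_E(\cdot,\cdot)$ and $\Omega_E(\cdot,\cdot)=g_E(\cdot,F_E(\cdot))$. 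The two cubic expressions simplify, several terms cancel, and the right-hand side should reduce to $-2\eta(s_2)g_E(F_E(s_1),s_3)-2g_E(s_1,F_E(s_2))\eta(s_3)$; dividing by $2$ and using nondegeneracy of $g_E$ in the $s_3$ slot yields \eqref{IV24}. This algebraic simplification is the one genuinely fiddly step, but it is finite and purely formal.

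For the converse, assume \eqref{IV24} holds and reverse the implications. Setting $s_2=\xi$ in \eqref{IV24} and using $F_E(\xi)=0$, $\eta(\xi)=1$ gives $\nabla_{s_1}(F_E(\xi))-F_E(\nabla_{s_1}\xi)=-F_E(s_1)$, i.e. $F_E(\nabla_{s_1}\xi)=F_E(s_1)$, whence (applying $F_E$ once more and using $F_E^2=-I_E+\eta\otimes\xi$ plus $\eta(\nabla_{s_1}\xi)=\tfrac12\rho_E(s_1)(\eta(\xi))=0$) one obtains $\nabla_{s_1}\xi=s_1-\eta(s_1)\xi$. From this and the torsion-free Koszul formula for $d_E\eta$ one computes $2d_E\eta(s_1,s_2)=g_E(\nabla_{s_1}\xi,s_2)-g_E(\nabla_{s_2}\xi,s_1)=0$, so $d_E\eta=0$. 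Next, from $\Omega_E(s_1,s_2)=g_E(s_1,F_E(s_2))$ and $\nabla g_E=0$ one gets $(\nabla_{s}\Omega_E)(s_1,s_2)=g_E(s_1,(\nabla_sF_E)s_2)$; feeding in \eqref{IV24} and antisymmetrizing over $s,s_1,s_2$ (the torsion-free formula $d_E\Omega_E(s,s_1,s_2)=\sum_{\text{cyc}}(\nabla_s\Omega_E)(s_1,s_2)$) produces exactly $d_E\Omega_E=2\eta\wedge\Omega_E$. Finally, to see that the structure is normal it suffices to expand $N_E^{(1)}(s_1,s_2)=N_{F_E}(s_1,s_2)+2d_E\eta(s_1,s_2)\xi$, replace every bracket via $[s_1,s_2]_E=\nabla_{s_1}s_2-\nabla_{s_2}s_1$ (torsion-free), rewrite everything through $(\nabla F_E)$ using \eqref{IV19}, substitute \eqref{IV24}, and check term by term that the result vanishes; the $d_E\eta$-term is already zero. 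The main obstacle throughout is simply the length of these tensor manipulations — there is no conceptual difficulty once one notes that torsion-freeness and metricity of $\nabla$ on a Riemannian Lie algebroid make all the manifold-case identities (Koszul formula, $d$ in terms of $\nabla$, $N_A$ in terms of $\nabla A$) carry over unchanged — so in the write-up I would state "by a direct computation entirely analogous to the Kenmotsu manifold case, using Proposition \ref{pIII4} and normality" and display only the key intermediate identities $\nabla_{s_1}\xi=s_1-\eta(s_1)\xi$ and the reduced form of the Proposition \ref{pIII4} formula.
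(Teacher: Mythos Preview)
Your proposal is correct and follows essentially the same route as the paper: the forward direction is exactly the paper's one-line argument from Proposition \ref{pIII4} plus normality and \eqref{IV23}, and for the converse the paper likewise uses \eqref{IV24}, \eqref{IV19} and $\eta(\nabla_{s_1}\xi)=0$ to obtain $d_E\eta=0$, $d_E\Omega_E=2\eta\wedge\Omega_E$ and $N_E^{(1)}=0$ by direct computation. The only cosmetic differences are that you extract the intermediate identity $\nabla_{s_1}\xi=s_1-\eta(s_1)\xi$ explicitly and establish $d_E\eta=0$ before $d_E\Omega_E=2\eta\wedge\Omega_E$, whereas the paper reverses that order and leaves the $\nabla\xi$-formula implicit.
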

\begin{proof}
Follows as in the case of Kenmotsu manifolds (see \cite{Pi}).
\end{proof}
Also, by straightforward calculation it follows
\begin{proposition}
\label{pIV10}
On a Kenmotsu Lie algebroid the following equalities hold:
\begin{displaymath}
(\nabla_{s_1}\eta)(s_2)=g_E(s_1,s_2)-\eta(s_1)\eta(s_2)\,,\,\mathcal{L}_\xi g_E=2(g_E-\eta\otimes\eta)\,,\,\mathcal{L}_\xi F_E=0\,,\,\mathcal{L}_\xi\eta=0.
\end{displaymath}
\end{proposition}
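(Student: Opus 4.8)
The plan is to run everything off the Levi--Civita characterization \eqref{IV24} of Theorem \ref{tIV4}, using the metric compatibility $\nabla g_E=0$, the torsion-freeness of $\nabla$ (so that $[s,s']_E=\nabla_s s'-\nabla_{s'}s$), and the pointwise identities $F_E(\xi)=0$, $\eta\circ F_E=0$, $\eta(s)=g_E(s,\xi)$ supplied by Propositions \ref{pII1} and \ref{pII2}. The one genuinely substantive step is to extract the formula for $\nabla\xi$. Putting $s_2=\xi$ in \eqref{IV24} and using \eqref{IV19} together with $F_E(\xi)=0$ gives $F_E(\nabla_{s_1}\xi)=F_E(s_1)$; applying $F_E$ once more and invoking \eqref{II1} together with $\eta(\nabla_{s_1}\xi)=g_E(\nabla_{s_1}\xi,\xi)=0$ (because $\xi$ is unitary, cf.\ \eqref{IV25}) yields
\begin{displaymath}
\nabla_{s_1}\xi=s_1-\eta(s_1)\xi ,
\end{displaymath}
the Kenmotsu analogue of \eqref{IV16}. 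Once this is available the remaining four identities are bookkeeping.

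The formula for $\nabla\eta$ is then immediate: since $\eta(s)=g_E(s,\xi)$ and $\nabla g_E=0$,
\begin{displaymath}
(\nabla_{s_1}\eta)(s_2)=\rho_E(s_1)(g_E(s_2,\xi))-g_E(\nabla_{s_1}s_2,\xi)=g_E(s_2,\nabla_{s_1}\xi)=g_E(s_1,s_2)-\eta(s_1)\eta(s_2).
\end{displaymath}
For $\mathcal{L}_\xi g_E$ I would expand $(\mathcal{L}_\xi g_E)(s_1,s_2)=\rho_E(\xi)(g_E(s_1,s_2))-g_E([\xi,s_1]_E,s_2)-g_E(s_1,[\xi,s_2]_E)$, substitute $[\xi,s_i]_E=\nabla_\xi s_i-\nabla_{s_i}\xi$ and use $\nabla g_E=0$; the $\nabla_\xi$-terms cancel and what is left is $g_E(\nabla_{s_1}\xi,s_2)+g_E(s_1,\nabla_{s_2}\xi)=2g_E(s_1,s_2)-2\eta(s_1)\eta(s_2)$, i.e.\ $\mathcal{L}_\xi g_E=2(g_E-\eta\otimes\eta)$.

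For $\mathcal{L}_\xi F_E$, the same torsion-free expansion gives $(\mathcal{L}_\xi F_E)(s)=(\nabla_\xi F_E)(s)+F_E(\nabla_s\xi)-\nabla_{F_E(s)}\xi$. The first term vanishes by \eqref{IV24} evaluated at $s_1=\xi$, using $F_E(\xi)=0$ and $\eta\circ F_E=0$; the other two cancel because $F_E(\nabla_s\xi)=F_E(s-\eta(s)\xi)=F_E(s)$ and $\nabla_{F_E(s)}\xi=F_E(s)-\eta(F_E(s))\xi=F_E(s)$ by the formula for $\nabla\xi$ together with $F_E(\xi)=0$ and $\eta\circ F_E=0$. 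Hence $\mathcal{L}_\xi F_E=0$. Finally $\mathcal{L}_\xi\eta=d_E(\imath_\xi\eta)+\imath_\xi(d_E\eta)=d_E(1)+0=0$, since $d_E\eta=0$ on an almost Kenmotsu Lie algebroid by \eqref{IV23}. The only point requiring any real thought is the derivation of $\nabla_{s_1}\xi=s_1-\eta(s_1)\xi$; I do not anticipate a serious obstacle beyond that.
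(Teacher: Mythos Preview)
Your proof is correct and is exactly the ``straightforward calculation'' the paper alludes to (the paper gives no details beyond that phrase). The only substantive ingredient is the formula $\nabla_{s_1}\xi=s_1-\eta(s_1)\xi$, which you derive correctly from \eqref{IV24} with $s_2=\xi$; the four identities then follow by the routine manipulations you indicate.
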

From Proposition \ref{pIV10} it follows that the Reeb section $\xi$ of a Kenmotsu Lie algebroid cannot be Killing, hence such Lie algebroid cannot be Sasakian and more generally, it cannot be $K$--contact.

\section{An almost contact Lie algebroid structure on the vertical Liouville distribution on the big-tangent manifold}
\setcounter{equation}{0}
The following definition which generalizes the notion of framed $f(3,1)$-structure from manifolds to Lie algebroids will be important for our next considerations.
\begin{definition}
A framed $f(3,1)$-structure of corank $s$ on a Lie algebroid $(E,\rho_E,[\cdot,\cdot]_E)$ of rank $(2n+s)$ is a natural generalization of an almost contact structure on $E$ and it is a triplet $(f, (\xi_{a}), (\omega^{a})), a =1,\ldots,s$, where $f\in\Gamma(E\otimes E^*)$ is a tensor of type $(1,1)$, $(\xi_{a})$ are sections of $E$ and $(\omega^{a})$ are $1$-forms on $E$ such that
\begin{equation}
 \omega^{a}(\xi_{b})=\delta^{a}_{b}\,,\,f(\xi_{a})=0\,,\,\omega^{a}\circ f=0\,,\,f^{2}=-I_E+\sum_{a}\omega^{a}\otimes\xi_{a}.
 \label{IIIx1}
 \end{equation}
\end{definition}
The name of $f(3,1)$-structure was suggested by the identity $f^{3}+f=0$. For an account of such kind of structures on manifolds we refer for instance to \cite{G-Y, Y}.

In this section we introduce a natural framed $f(3,1)$-structure of corank $2$ on the Lie algebroid defined by the vertical bundle over the big-tangent manifold of a Riemannian manifold $(M,g)$. When we restrict it to an integrable  vertical Liouville distribution over  the big-tangent manifold, which has a natural structure of Lie algebroid, we obtain an almost contact structure.

\subsection{Vertical framed $f$-structures on the big-tangent manifold}

The aim of this subsection is to construct some framed $f(3,1)$-structures on the vertical bundle $V=V_1\oplus V_2$ over the big-tangent manifold $\mathcal{T}M$ when $(M,g)$ is a Riemannian manifold.

Let $M$ be a $n$-dimensional smooth manifold, and we consider $\pi:TM\rightarrow M$ its tangent bundle, $\pi^*:T^*M\rightarrow M$ its cotangent bundle and $\tau\equiv \pi\oplus\pi^*:TM\oplus T^*M\rightarrow M$ its big-tangent bundle defined as the Whitney sum of the tangent and the cotangent bundles of $M$. The total space of the big-tangent bundle, called \textit{big-tangent manifold}, is a $3n$-dimensional smooth manifold denoted here by $\mathcal{T}M$. Let us briefly recall some elementary notions about the big-tangent manifold $\mathcal{T}M$. For a detalied discussion about its geometry we refer \cite{Va13}. 

Let $(U,(x^{i}))$ be a local chart on $M$. If $\{\frac{\partial}{\partial x^{i}}|_x\}$, $x\in U$ is a local frame of sections of the tangent bundle over $U$ and $\{dx^{i}|_x\}$, $x\in U$ is a local frame of sections of the cotangent bundle over $U$, then by definition of the Whitney sum, $\{\frac{\partial}{\partial x^{i}}|_x,dx^{i}|_x\}$, $x\in U$ is a local frame of sections of the big-tangent bundle $TM\oplus T^*M$ over $U$. Every section $(y,p)$ of $\tau$ over $U$ takes the form $(y,p)=y^{i}\frac{\partial}{\partial x^{i}}+p_i dx^{i}$ and the local coordinates on $\tau^{-1}(U)$ will be defined as the triples $(x^{i},y^{i},p_i)$,  where $i=1,\ldots,n=\dim M$, $(x^{i})$ are local coordinates on $M$, $(y^{i})$ are vector coordinates and $(p_i)$ are covector coordinates. 
The  local expressions of a vector field $X$ and of a $1$-form $\varphi$ on $\mathcal{T}M$ are
\begin{equation}
X=\xi^{i}\frac{\partial}{\partial x^{i}}+\eta^{i}\frac{\partial}{\partial y^{i}}+\zeta_i\frac{\partial}{\partial p_i}\,\,\,{\rm and}\,\,\,\varphi=\alpha_idx^{i}+\beta_idy^{i}+\gamma^{i}dp_i.
\label{I2}
\end{equation}
For the big-tangent manifold $\mathcal{T}M$ we have the following projections
\begin{displaymath}
\tau:\mathcal{T}M\rightarrow M\,,\,\tau_1:\mathcal{T}M\rightarrow TM\,,\,\tau_2:\mathcal{T}M\rightarrow T^*M
\end{displaymath}
on $M$ and on the total spaces of tangent and cotangent bundle, respectively. 
As usual, we denote by $V=V(\mathcal{T}M)$ the vertical bundle of the big-tangent manifold $\mathcal{T}M$ with respect to projection $\tau$ and it has the decomposition
\begin{equation}
\label{I4}
V=V_1\oplus V_2,
\end{equation}
where $V_1=\tau_1^{-1}(V(TM))$, $V_2=\tau_2^{-1}(V(T^*M))$ and have the local frames $\{\frac{\partial}{\partial y^{i}}\}$, $\{\frac{\partial}{\partial p_i}\}$, respectively. The subbundles $V_1$, $V_2$ are the vertical foliations of $\mathcal{T}M$ by fibers of $\tau_1, \tau_2$, respectively, and $\mathcal{T}M$ has a multi-foliate structure \cite{Va70}. The \textit{Liouville vector fields} are given by
\begin{equation}
\label{I5}
\mathcal{E}_1=y^{i}\frac{\partial}{\partial y^{i}}\in\Gamma(V_1)\,,\,\mathcal{E}_2=p_i\frac{\partial}{\partial p_i}\in\Gamma(V_2)\,,\,\mathcal{E}=\mathcal{E}_1+\mathcal{E}_2\in\Gamma(V).
\end{equation}

In the following we consider a Riemannian metric $g=(g_{ij}(x))_{n\times n}$ on the paracompact manifold $M$, and we put:
\begin{equation}
\label{f1}
y_i=g_{ij}y^j\,,\,p^i=g^{ij}p_j,
\end{equation}
where $(g^{ij})_{n\times n}$ denotes the inverse matrix of $(g_{ij})_{n\times n}$. It is well known that $g_{ij}$ determines in a natural way a Finsler metric on $TM$ by putting $F^2(x,y)=g_{ij}(x)y^iy^j$ and similarly, $g^{ij}$ determines a Cartan metric on $T^*M$ by putting $K^2(x,p)=g^{ij}(x)p_ip_j$. Then the relations \eqref{f1} imply
\begin{equation}
\label{f2}
y_iy^i=F^2\,,\,p_ip^i=K^2.
\end{equation}
Also, the Riemannian metric $g$ on $M$ determines a metric structure $G$ on $V$ by setting
\begin{equation}
\label{I8}
G(X,Y)=g_{ij}(x)X_1^{i}(x,y,p)Y_1^j(x,y,p)+g^{ij}(x)X^2_i(x,y,p)Y^2_j(x,y,p),
\end{equation}
for every $X=X_1^{i}(x,y,p)\frac{\partial}{\partial y^{i}}+X^2_i(x,y,p)\frac{\partial}{\partial p_i}$, $Y=Y_1^{j}(x,y,p)\frac{\partial}{\partial y^{j}}+Y^2_j(x,y,p)\frac{\partial}{\partial p_j}\in\Gamma(V)$.

Let us define the linear operator $\phi:V\rightarrow V$ given in the local vertical frames $\{\frac{\partial}{\partial y^i},\frac{\partial}{\partial p_i}\}$ by
\begin{equation}
\phi\left(\frac{\partial}{\partial y^i}\right)=-g_{ij}\frac{\partial}{\partial p_j}\,\,,\,\,\phi\left(\frac{\partial}{\partial p_i}\right)=g^{ij}\frac{\partial}{\partial y^j}.
\label{IIIx2}
\end{equation}
It is easy to see that $\phi$ defines an almost complex structure on $V$ and 
\begin{equation}
G(\phi(X),\phi(Y))=G(X,Y),\,\, \forall\,X,Y\in\Gamma(V).
\label{IIIx3}
\end{equation}
As $V$ is an integrable distribution on $\mathcal{T}M$ it follows that $(V,\phi,G)$ is a Hermitian Lie algebroid (foliation) over $\mathcal{T}M$ since $N_\phi=0$, where $N_\phi$ denotes the Nijenhuis vertical tensor field associated to $\phi$.

Let us put 
\begin{equation}
\label{IIIs1}\xi_{2}=\frac{1}{\sqrt{F^2+K^2}}\left(y^i\frac{\partial}{\partial y^i}+p_i\frac{\partial}{\partial p_i}\right)\,,\,\xi_1=\phi(\xi_2)=\frac{1}{\sqrt{F^2+K^2}}\left(p^i\frac{\partial}{\partial y^i}-y_i\frac{\partial}{\partial p_i}\right),
\end{equation}
where as before $y_i=g_{ij}y^j$ and $p^i=g^{ij}p_j$.

Also, we consider the corresponding dual vertical $1$-forms of $\xi_1$ and $\xi_2$, respectively, which are locally given by
\begin{equation}
\label{IIIs2}
\omega^1=\frac{1}{\sqrt{F^2+K^2}}(p_i\theta^i-y^ik_i)\,,\,\omega^2=\frac{1}{\sqrt{F^2+K^2}}(p^ik_i+y_i\theta^i),
\end{equation}
where $\theta^i(\partial/\partial y^j)=\delta^i_j$, $\theta^i(\partial/\partial p_j)=0$, $k_i(\partial/\partial y^j)=0$ and $k_i(\partial/\partial p_j)=\delta^j_i$.

By direct calculations, we have

\begin{lemma}
\label{ls1}
The following assertions hold:
\begin{itemize}
\item[(i)] $\phi(\xi_1)=-\xi_2$, $\phi(\xi_2)=\xi_1$;
\item[(ii)] $\omega^1\circ\phi=\omega^2$, $\omega^2\circ\phi=-\omega^1$;
\item[(iii)] $\omega^a(X)=G(X,\xi_a)$, $a=1,2$.
\end{itemize}
\end{lemma}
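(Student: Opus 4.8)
The plan is to prove the three assertions of Lemma \ref{ls1} by direct computation in the local vertical frames $\{\frac{\partial}{\partial y^i},\frac{\partial}{\partial p_i}\}$, using the explicit formulas \eqref{IIIx2}, \eqref{IIIs1} and \eqref{IIIs2}, together with the bookkeeping identities $y_i=g_{ij}y^j$, $p^i=g^{ij}p_j$, $y_iy^i=F^2$, $p_ip^i=K^2$ from \eqref{f1}--\eqref{f2}. None of the three parts is conceptually hard; the work consists in applying linearity of $\phi$ to the defining expressions and raising/lowering indices with $g$, so I will only indicate the decisive cancellations rather than grinding through every term.

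For (i), I would apply $\phi$ to $\xi_1$ as written in \eqref{IIIs1}. Linearity gives $\phi(\xi_1)=\frac{1}{\sqrt{F^2+K^2}}\left(p^i\phi(\tfrac{\partial}{\partial y^i})-y_i\phi(\tfrac{\partial}{\partial p_i})\right)$; substituting \eqref{IIIx2} yields $\frac{1}{\sqrt{F^2+K^2}}\left(-p^ig_{ij}\tfrac{\partial}{\partial p_j}-y_ig^{ij}\tfrac{\partial}{\partial y^j}\right)$, and since $p^ig_{ij}=p_j$ and $y_ig^{ij}=y^j$ this collapses exactly to $-\xi_2$. The identity $\phi(\xi_2)=\xi_1$ is already the definition of $\xi_1$ in \eqref{IIIs1}; alternatively it follows from $\phi^2=-I_V$ applied to the first relation (recall $\phi$ is an almost complex structure on $V$, so $\phi^2=-I_V$ on all of $V$, and $\xi_1,\xi_2$ have no ``$\eta\otimes\xi$''-type correction since $V$ carries a genuine complex, not contact, structure).

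For (ii), the cleanest route is duality: since $\omega^1,\omega^2$ are by construction the dual $1$-forms of $\xi_1,\xi_2$ with respect to the vertical frame $\{\xi_1,\xi_2,\dots\}$ (equivalently, via part (iii), $\omega^a=G(\cdot,\xi_a)$), we have $(\omega^1\circ\phi)(X)=\omega^1(\phi(X))=G(\phi(X),\xi_1)$. Now use \eqref{IIIx3} and the fact that $\phi$ is $G$-skew (which follows from \eqref{IIIx3} and $\phi^2=-I_V$: $G(\phi X,Y)=G(\phi^2X,\phi Y)=-G(X,\phi Y)$), so $G(\phi(X),\xi_1)=-G(X,\phi(\xi_1))=-G(X,-\xi_2)=G(X,\xi_2)=\omega^2(X)$; the identity $\omega^2\circ\phi=-\omega^1$ is obtained the same way using $\phi(\xi_2)=\xi_1$. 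If one prefers to avoid (iii) at this stage, the same two identities drop out of a direct index computation from \eqref{IIIs2} and \eqref{IIIx2}, contracting $p^ig_{ij}=p_j$, $y^ig_{ij}=y_i$ and so on.

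For (iii), I would compute $G(X,\xi_a)$ directly from \eqref{I8}. Writing $X=X_1^i\frac{\partial}{\partial y^i}+X^2_i\frac{\partial}{\partial p_i}$ and $\xi_2=\frac{1}{\sqrt{F^2+K^2}}(y^j\frac{\partial}{\partial y^j}+p_j\frac{\partial}{\partial p_j})$, formula \eqref{I8} gives $G(X,\xi_2)=\frac{1}{\sqrt{F^2+K^2}}(g_{ij}X_1^iy^j+g^{ij}X^2_ip_j)=\frac{1}{\sqrt{F^2+K^2}}(y_iX_1^i+p^iX^2_i)$, which is precisely $\omega^2(X)$ read off from \eqref{IIIs2} after identifying $\theta^i=dy^i$, $k_i=dp_i$ as the coordinate $1$-forms dual to $\frac{\partial}{\partial y^i},\frac{\partial}{\partial p_i}$. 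The computation for $\omega^1(X)=G(X,\xi_1)$ is identical, using $\xi_1=\frac{1}{\sqrt{F^2+K^2}}(p^j\frac{\partial}{\partial y^j}-y_j\frac{\partial}{\partial p_j})$ and $g_{ij}p^j=p_i$, $g^{ij}y_j=y^i$. The only mild subtlety — and the closest thing to an obstacle — is keeping the raised/lowered index conventions of \eqref{f1} consistent throughout and noting that the normalization $\|\xi_a\|^2_G=1$ (hence the coefficient $1/\sqrt{F^2+K^2}$) is exactly what makes $\{\xi_1,\xi_2,\dots\}$ into a frame whose dual contains $\omega^1,\omega^2$; once that is observed, all three parts follow by routine contraction.
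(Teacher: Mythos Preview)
Your proposal is correct and follows exactly the route the paper takes, namely direct verification from the local expressions \eqref{IIIx2}, \eqref{IIIs1}, \eqref{IIIs2} and \eqref{I8} together with the index-raising/lowering identities \eqref{f1}--\eqref{f2}; the paper itself only records ``by direct calculations, we have'' without further detail. The one cosmetic point is that your slick argument for (ii) invokes (iii), so in a linear write-up you should either prove (iii) first or stick to the direct index computation you mention as the alternative.
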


Now, we define a tensor field $f$ of type $(1,1)$ on $V$ by
\begin{equation}
f(X)=\phi(X)-\omega^{2}(X)\xi_{1}+\omega^{1}(X)\xi_{2},\,\forall\,X\in\Gamma(V).
\label{IIIx4}
\end{equation}
\begin{theorem}
\label{tIII1}
The triplet $(f,(\xi_{a}),(\omega^{a})), a=1,2$ provides a framed $f(3,1)$-structure on $V$, namely
\begin{enumerate}
\item[(i)]  $\omega^{a}(\xi_{b})=\delta^{a}_{b}$ , $f(\xi_{a})=0$ , $\omega^{a}\circ f=0$;
\item[(ii)] $f^{2}(X)=-X+\omega^{1}(X)\xi_{1}+\omega^{2}(X)\xi_{2}$, for any  $X\in\Gamma (V)$;
\item[(iii)] $f$ is of rank $2n-2$ and $f^{3}+f=0$.
\end{enumerate}
\end{theorem}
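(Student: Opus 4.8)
The strategy is to verify each of the three assertions by reducing everything to the already-established properties of the Hermitian structure $(\phi,G)$ on $V$ together with the algebraic identities collected in Lemma \ref{ls1}. The essential book-keeping device is the fact that $\xi_1,\xi_2$ are $G$-orthonormal (which follows from Lemma \ref{ls1}(iii), since $\omega^a(\xi_b)=G(\xi_b,\xi_a)$ and $\omega^a(\xi_b)=\delta^a_b$ will be part of (i)), so that the three terms making up $f$ in \eqref{IIIx4} behave well under composition. I would carry out (i) first, then deduce (ii) as a direct computation, and finally read off (iii) from (ii).

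First I would prove (i). The identity $\omega^a(\xi_b)=\delta^a_b$ is immediate: $\omega^1,\omega^2$ were defined as the dual vertical $1$-forms of $\xi_1,\xi_2$, or alternatively one checks it directly from \eqref{IIIs1}, \eqref{IIIs2} using \eqref{f2}. For $f(\xi_a)=0$: applying \eqref{IIIx4} to $\xi_1$ gives $f(\xi_1)=\phi(\xi_1)-\omega^2(\xi_1)\xi_1+\omega^1(\xi_1)\xi_2=-\xi_2-0\cdot\xi_1+1\cdot\xi_2=0$ by Lemma \ref{ls1}(i) and the relations just proved; the computation for $\xi_2$ is symmetric, using $\phi(\xi_2)=\xi_1$. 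For $\omega^a\circ f=0$: apply $\omega^1$ to \eqref{IIIx4}, obtaining $\omega^1(f(X))=\omega^1(\phi(X))-\omega^2(X)\omega^1(\xi_1)+\omega^1(X)\omega^1(\xi_2)=\omega^2(X)-\omega^2(X)+0=0$ by Lemma \ref{ls1}(ii); similarly $\omega^2(f(X))=-\omega^1(X)-0+\omega^1(X)=0$.

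Next, for (ii) I would simply expand $f^2(X)=f(f(X))$. Write $f(X)=\phi(X)-\omega^2(X)\xi_1+\omega^1(X)\xi_2$, apply $f$ again, and use linearity together with $f(\xi_1)=f(\xi_2)=0$ from part (i); this kills the two correction terms, leaving $f^2(X)=f(\phi(X))=\phi^2(X)-\omega^2(\phi(X))\xi_1+\omega^1(\phi(X))\xi_2$. Since $\phi$ is an almost complex structure, $\phi^2(X)=-X$; and by Lemma \ref{ls1}(ii), $\omega^2(\phi(X))=-\omega^1(X)$ and $\omega^1(\phi(X))=\omega^2(X)$. Substituting gives $f^2(X)=-X+\omega^1(X)\xi_1+\omega^2(X)\xi_2$, which is exactly the claimed identity (equivalently $f^2=-I_V+\sum_a\omega^a\otimes\xi_a$, matching \eqref{IIIx1} with $s=2$).

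Finally, (iii) follows from (ii) with no further work. Composing once more, $f^3(X)=f(f^2(X))=f(-X+\omega^1(X)\xi_1+\omega^2(X)\xi_2)=-f(X)$ since $f$ annihilates $\xi_1,\xi_2$; hence $f^3+f=0$. For the rank statement, note that on the orthogonal complement of $\mathrm{span}\{\xi_1,\xi_2\}$ in $V$ — a subbundle of rank $2n-2$ — $f$ agrees with $\phi$ up to the correction terms, which vanish there because $\omega^a(X)=G(X,\xi_a)=0$ for such $X$; thus $f$ restricted to this complement coincides with the invertible map $\phi$, so $\mathrm{rank}\,f\geq 2n-2$, while $f(\xi_1)=f(\xi_2)=0$ shows the kernel contains $\mathrm{span}\{\xi_1,\xi_2\}$, giving $\mathrm{rank}\,f=2n-2$. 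The only mildly delicate point in the whole argument is keeping the signs straight in the interplay between the two $\omega^a\circ\phi$ relations and the two correction terms in \eqref{IIIx4}; everything else is a direct substitution.
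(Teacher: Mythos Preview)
Your proof is correct, and for parts (i), (ii), and the identity $f^3+f=0$ it follows exactly the paper's approach: direct substitution into \eqref{IIIx4} using Lemma~\ref{ls1}(i),(ii). The only genuine difference is in the rank computation. The paper argues locally: it takes $X=X^i\frac{\partial}{\partial y^i}+Y_i\frac{\partial}{\partial p_i}\in\ker f$, expands the condition $f(X)=0$ using \eqref{IIIx6}--\eqref{IIIx6-1}, and solves explicitly for $X^k,Y_k$ to exhibit $X$ as a linear combination of $\xi_1,\xi_2$. Your route is cleaner and coordinate-free: on the $G$-orthogonal complement of $\mathrm{span}\{\xi_1,\xi_2\}$ one has $\omega^a=0$ by Lemma~\ref{ls1}(iii), so $f=\phi$ there, and $\phi$ is invertible; hence $\ker f\subset\mathrm{span}\{\xi_1,\xi_2\}$. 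Your argument avoids the local computation entirely at the cost of invoking the metric $G$ and Lemma~\ref{ls1}(iii), whereas the paper's version is self-contained but heavier. Both are valid.
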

\begin{proof}
Using \eqref{IIIx4} and Lemma \ref{ls1} (i) and (ii), by direct calculations we get (i) and (ii). Applying $f$ to the equality (ii) and taking into account the equality (i) one obtains $f^{3}+f=0$. Now, from the second equations in (i), we see that ${\rm span}\{\xi_{1},\xi_{2}\}\subset \ker f$. We prove now that $\ker f\subset {\rm span}\{\xi_1,\xi_2\}$. Indeed, let be $X\in\ker f$ written locally in the form $X=X^i\frac{\partial}{\partial y^i}+Y_i\frac{\partial}{\partial p_i}$. By a direct calculation, the condition $f(X)=0$ gives
\begin{eqnarray*}
X&=&\frac{p_iX^i-y^iY_i}{\sqrt{F^2+K^2}}\xi_1+\frac{y_iX^i+p^iY_i}{\sqrt{F^2+K^2}}\xi_2\in{\rm span}\{\xi_1,\xi_2\}
\end{eqnarray*}
and rank$f=2n-2$.
\end{proof}
\begin{theorem}
\label{tIII2}
The Riemannian metric $G$ verifies
\begin{equation}
G(f(X),f(Y))=G(X,Y)-\omega^{1}(X)\omega^{1}(Y)-\omega^{2}(X)\omega^{2}(Y)
\label{IIIx5}
\end{equation}
for any $X,Y\in\Gamma(V)$.
\end{theorem}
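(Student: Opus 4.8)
The plan is to reduce the claimed compatibility identity \eqref{IIIx5} for $f$ to the already-known compatibility identity \eqref{IIIx3} for $\phi$, using the definition \eqref{IIIx4} of $f$ together with the three assertions of Lemma~\ref{ls1}. First I would simply substitute $f(X)=\phi(X)-\omega^{2}(X)\xi_{1}+\omega^{1}(X)\xi_{2}$ and $f(Y)=\phi(Y)-\omega^{2}(Y)\xi_{1}+\omega^{1}(Y)\xi_{2}$ into $G(f(X),f(Y))$ and expand by bilinearity of $G$. This produces nine terms: the leading term $G(\phi(X),\phi(Y))$, four "mixed" terms of the shape $G(\phi(X),\xi_a)$ or $G(\xi_a,\phi(Y))$ weighted by the forms $\omega^b$, and four "pure" terms of the shape $G(\xi_a,\xi_b)$ weighted by products $\omega^c(X)\omega^d(Y)$.

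The key simplifications come from Lemma~\ref{ls1}. For the pure terms, assertion (iii) gives $G(\xi_a,\xi_b)=\omega^a(\xi_b)=\delta^a_b$ (using Theorem~\ref{tIII1}(i)), so $\{\xi_1,\xi_2\}$ is $G$-orthonormal; hence the four pure terms collapse to $\omega^{2}(X)\omega^{2}(Y)+\omega^{1}(X)\omega^{1}(Y)$. For the mixed terms, apply assertion (iii) in the form $G(\phi(X),\xi_1)=\omega^1(\phi(X))$ and $G(\phi(X),\xi_2)=\omega^2(\phi(X))$, then use assertion (ii), $\omega^1\circ\phi=\omega^2$ and $\omega^2\circ\phi=-\omega^1$, to rewrite these as $\omega^2(X)$ and $-\omega^1(X)$ respectively (and similarly for the $Y$-slot). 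A short bookkeeping check then shows the mixed contributions cancel in pairs: the term $-\omega^2(Y)\,G(\phi(X),\xi_1)=-\omega^2(X)\omega^2(Y)$ is killed against the corresponding pure term, and likewise for the remaining three, with the signs working out precisely so that exactly two copies of $\omega^1(X)\omega^1(Y)$ and $\omega^2(X)\omega^2(Y)$ survive with a net minus sign. Finally, invoking \eqref{IIIx3} to replace $G(\phi(X),\phi(Y))$ by $G(X,Y)$ yields $G(f(X),f(Y))=G(X,Y)-\omega^1(X)\omega^1(Y)-\omega^2(X)\omega^2(Y)$, which is \eqref{IIIx5}.

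The only real obstacle is the sign bookkeeping in the mixed terms: one must be careful that the $\pm$ signs from Lemma~\ref{ls1}(ii) combine correctly with the $\pm$ signs already present in the definition \eqref{IIIx4} of $f$, so that the mixed terms exactly cancel the "extra" copies of the $\omega^a\otimes\omega^a$ pieces rather than adding to them. Since everything is $C^\infty(\mathcal{T}M)$-bilinear, it suffices to verify the identity on the decomposition $X=X^D+\omega^1(X)\xi_1+\omega^2(X)\xi_2$ with $X^D\in\Gamma(\mathrm{im}\,f)$ orthogonal to $\mathrm{span}\{\xi_1,\xi_2\}$ (and likewise for $Y$), which makes the cancellation transparent: on $\mathrm{im}\,f$ one has $f=\phi$ and $\omega^a(X^D)=0$, so \eqref{IIIx5} is just \eqref{IIIx3} there, while on $\mathrm{span}\{\xi_1,\xi_2\}$ both sides vanish by Theorem~\ref{tIII1}(i). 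This structural argument avoids the coordinate computation entirely, though I would still record the direct expansion above as the more elementary route.
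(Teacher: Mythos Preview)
Your proposal is correct and follows essentially the same route as the paper's proof: the paper simply states that, since $G(\xi_1,\xi_2)=0$ and $G(\xi_1,\xi_1)=G(\xi_2,\xi_2)=1$, the identity \eqref{IIIx5} follows by using \eqref{IIIx4} together with Lemma~\ref{ls1}~(ii) and~(iii), which is exactly the expansion-and-cancellation you describe (the paper tacitly uses \eqref{IIIx3} as well). Your verbal account of the mixed-term bookkeeping is slightly loose---the four mixed terms each contribute a minus copy and the two surviving pure terms each contribute a plus copy, leaving a single net $-\omega^a(X)\omega^a(Y)$ for $a=1,2$---but the computation and the final formula are right; the paper also records, in the subsequent Remark, the alternative local-coordinate verification via \eqref{IIIx6}--\eqref{IIIx7}, which you chose to omit.
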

\begin{proof}
Since $G(\xi_{1},\xi_{2})=0$ and $G(\xi_{1},\xi_{1})=G(\xi_{2},\xi_{2})=1$, by using \eqref{IIIx4} and Lemma \ref{ls1} (ii) and (iii) we get \eqref{IIIx5}.
\end{proof}
\begin{remark}
The above theorem follows in a different way if we use the local expression of the vertical tensor field $f$ in the local vertical frame $\{\frac{\partial}{\partial y^i}, \frac{\partial}{\partial p_i}\}$. Indeed, from \eqref{IIIx4} we have
\begin{equation}
f\left(\frac{\partial}{\partial y^i}\right)=\frac{p_iy^j-y_ip^j}{F^2+K^2}\frac{\partial}{\partial y^j}-\left(g_{ij}-\frac{y_iy_j+p_ip_j}{F^2+K^2}\right)\frac{\partial}{\partial p_j},
\label{IIIx6}
\end{equation}
\begin{equation}
f\left(\frac{\partial}{\partial p_i}\right)=\left(g^{ij}-\frac{p^ip^j+y^iy^j}{F^2+K^2}\right)\frac{\partial}{\partial y^j}+\frac{p^iy_j-y^ip_j}{F^2+K^2}\frac{\partial}{\partial p_j}
\label{IIIx6-1}
\end{equation}
and using \eqref{IIIx6} and \eqref{IIIx6-1} one finds
\begin{eqnarray}
G\left(f\left(\frac{\partial}{\partial y^i}\right),f\left(\frac{\partial}{\partial y^j}\right)\right)=g_{ij}-\frac{y_iy_j+p_ip_j}{F^2+K^2},
\nonumber\\
G\left(f\left(\frac{\partial}{\partial y^i}\right),f\left(\frac{\partial}{\partial p_j}\right)\right)=\frac{p_iy^j-y_ip^j}{F^2+K^2}, \,\,\,\,\,\,\,\,\,\,\,\,\,\,\label{IIIx7}\\
G\left(f\left(\frac{\partial}{\partial p_i}\right),f\left(\frac{\partial}{\partial p_j}\right)\right)=g^{ij}-\frac{y^iy^j+p^ip^j}{F^2+K^2}.\nonumber
\end{eqnarray}
Now, from \eqref{IIIx7} easily follows \eqref{IIIx5}.
\end{remark}

Theorem \ref{tIII2} says that $(f,G)$ is a Riemannian framed $f(3,1)$-structure on $V$.

Let us put $\Phi(X,Y)=G(f(X),Y)$ for any $X,Y\in\Gamma(V)$. We have that $\Phi$ is bilinear since $G$ is so, and using Lemma \ref{ls1} (iii) and Theorems \ref{tIII1} and \ref{tIII2}, by direct calculations we have $\Phi(Y,X)=-\Phi(X,Y)$ which says that $\Phi$ is a $2$-form on $V$.

The Theorem shows that the annihilator of $\Phi$ is ${\rm span}\{\xi_1,\xi_2\}$. Also, a direct calculation gives $[\xi_1,\xi_2]=\frac{1}{\sqrt{F^2+K^2}}\xi_1$ which says that the distribution $\{\xi_1,\xi_2\}$ is integrable even if $\Phi$ is not $d_V$-closed, where $d_V$ is the (leafwise) vertical differential on $\mathcal{T}M$. We notice that the annihilator of a $d_V$-closed vertical $2$-form is always integrable. 

A direct calculus in local coordinates, using \eqref{IIIx6} and \eqref{IIIx6-1}, leads to
\begin{equation}
\label{IIIx8}
\Phi\left(\frac{\partial}{\partial y^i},\frac{\partial}{\partial y^j}\right)=\frac{p_iy_j-y_ip_j}{F^2+K^2}\,,\,\Phi\left(\frac{\partial}{\partial y^i},\frac{\partial}{\partial p_j}\right)=-\delta^j_i+\frac{y_iy^j+p_ip^j}{F^2+K^2}\,,\,\Phi\left(\frac{\partial}{\partial p_i},\frac{\partial}{\partial p_j}\right)=\frac{p^iy^j-y^ip^j}{F^2+K^2}.
\end{equation}
On the other hand, we have
\begin{displaymath}
d_V\omega^1\left(\frac{\partial}{\partial y^i},\frac{\partial}{\partial y^j}\right)=\frac{p_iy_j-y_ip_j}{2(F^2+K^2)\sqrt{F^2+K^2}}\,,\,d_V\omega^1\left(\frac{\partial}{\partial p_i},\frac{\partial}{\partial p_j}\right)=\frac{p^iy^j-y^ip^j}{2(F^2+K^2)\sqrt{F^2+K^2}},
\end{displaymath}
\begin{equation}
\label{IIIx9}
d_V\omega^1\left(\frac{\partial}{\partial y^i},\frac{\partial}{\partial p_j}\right)=\frac{1}{2\sqrt{F^2+K^2}}\left(-2\delta^j_i+\frac{y_iy^j+p_ip^j}{F^2+K^2}\right).
\end{equation}
Now, comparing $\Phi$ with $d_V\omega^1$, it follows
\begin{equation}
\label{IIIx10}
\Phi=2\sqrt{F^2+K^2}d_V\omega^1+\varphi,
\end{equation}
where $\varphi=\delta_i^j\theta^i\wedge k_j$. We have that $\frac{\Phi}{\sqrt{F^2+K^2}}$ is $d_V$-closed if and only if $\frac{\varphi}{\sqrt{F^2+K^2}}$ is $d_V$-closed, and it defines an almost presymplectic structure on the vertical Lie algebroid $V$.

\subsection{An almost contact structure on the vertical Liouville distribution}
Let us begin by considering a vertical Liouville distribution on $\mathcal{T}M$ as the complementary orthogonal distribution in $V$ to the line distribution spanned by the  unitary Liouville vector field $\xi_2=\frac{1}{\sqrt{F^2+K^2}}\mathcal{E}$. In \cite{Ida-Po} this distribution is considered in a more general case when the manifold $M$ is endowed with a Finsler structure and for this reason certain proofs are omitted here.  

Let us denote by $\left\{\xi_2\right\}$ the line vector bundle over $\mathcal{T}M$ spanned by $\xi_2$ and we define the \textit{vertical Liouville distribution} as the complementary orthogonal distribution $V_{\xi_2}$ to $\left\{\xi_2\right\}$ in $V$ with respect to $G$, that is $V=V_{\xi_2}\oplus\{\xi_2\}$. Thus, $V_{\xi_2}$ is defined by $\omega^2$, that is
\begin{equation}
\Gamma\left(V_{\xi_2}\right)=\{X\in\Gamma(V)\,:\,\omega^2(X)=0\}.
\label{a1}
\end{equation}
We get that every vertical vector field $X=X_1^{i}(x,y,p)\frac{\partial}{\partial y^{i}}+X^2_i(x,y,p)\frac{\partial}{\partial p_i}$ can be expressed as:
\begin{equation}
X=PX+\omega^2(X)\xi_2,
\label{a2}
\end{equation}
where $P$ is the projection morphism of $V$ on $V_{\xi_2}$. Also, by direct calculus, we get
\begin{equation}
G(X,PY)=G(PX,PY)=G(X,Y)-\omega^2(X)\omega^2(Y),\,\,\forall\,X,Y\in\Gamma(V).
\label{a3}
\end{equation}
With respect  to the basis $\left\{\theta^{j}\otimes\frac{\partial}{\partial y^i}, \theta^{j}\otimes\frac{\partial}{\partial p_i}, k_{j}\otimes\frac{\partial}{\partial y^i}, k_{j}\otimes\frac{\partial}{\partial p_i}\right\}$ of $\Gamma(V\otimes V^*)$ the vertical tensor field  $P$ is locally given by
\begin{equation}
P=\stackrel{1}{P^{i}_j}\theta^{j}\otimes\frac{\partial}{\partial y^{i}}+\stackrel{2}{P^{j}_i}k_j\otimes\frac{\partial}{\partial p_{i}}+\stackrel{3}{P_{ij}}\theta^{j}\otimes\frac{\partial}{\partial p_{i}}+\stackrel{4}{P^{ij}}k_j\otimes\frac{\partial}{\partial y^{i}},
\label{a4}
\end{equation}
where the local components are expressed by
\begin{equation}
\label{a5}
\stackrel{1}{P^{i}_j}=\delta^{i}_j-\frac{y_jy^{i}}{F^2+K^2}\,,\,\stackrel{2}{P^{i}_j}=\delta^{i}_j-\frac{p^ip_j}{F^2+K^2}\,,\,\stackrel{3}{P_{ij}}=-\frac{y_jp_i}{F^2+K^2}\,,\,\stackrel{4}{P^{ij}}=-\frac{p^jy^{i}}{F^2+K^2}.
\end{equation}

\begin{theorem}
\label{integrabil}
The vertical Liouville distribution $V_{\mathcal{E}}$ is integrable and it defines a Lie algebroid structure on $\mathcal{T}M$, called vertical Liouville Lie algebroid over the big-tangent manifold $\mathcal{T}M$.
\end{theorem}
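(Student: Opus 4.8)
The claim is that $V_{\xi_2} = V_{\mathcal{E}}$ is an involutive distribution on the vertical bundle $V$ of $\mathcal{T}M$, so that it inherits a Lie algebroid structure. Since $V$ is already an integrable distribution (the vertical foliation by fibers of $\tau$), its leafwise tangent bundle carries a natural Lie algebroid structure with anchor the inclusion $V \hookrightarrow T\mathcal{T}M$ and bracket the restriction of the Lie bracket of vector fields; what must be shown is that this bracket preserves $\Gamma(V_{\xi_2})$, i.e. that $V_{\xi_2}$ is a Lie subalgebroid. Equivalently, one must check $\omega^2([X,Y]) = 0$ for all $X,Y \in \Gamma(V_{\xi_2})$, or dually that $d_V\omega^2$ vanishes on $V_{\xi_2} \wedge V_{\xi_2}$.

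The plan is to work in the local vertical frame $\{\partial/\partial y^i, \partial/\partial p_i\}$ using the explicit form \eqref{IIIs2} of $\omega^2 = \tfrac{1}{\sqrt{F^2+K^2}}(p^i k_i + y_i \theta^i)$. First I would compute $d_V\omega^2$ explicitly: since $d_V$ acts only in the fiber directions, $d_V\omega^2 = d_V\!\big(\tfrac{1}{\sqrt{F^2+K^2}}\big)\wedge(p^i k_i + y_i\theta^i) + \tfrac{1}{\sqrt{F^2+K^2}}\,d_V(p^i k_i + y_i \theta^i)$. Using $d_V(F^2) = 2y_i\theta^i$, $d_V(K^2) = 2p^i k_i$ (recall $F^2 = g_{ij}y^i y^j$, $K^2 = g^{ij}p_i p_j$ with $g$ depending only on $x$), and $d_V(y_i\theta^i) = g_{ij}\theta^j\wedge\theta^i = 0$, $d_V(p^i k_i) = g^{ij}k_j \wedge k_i = 0$, one finds that $d_V\omega^2$ is a multiple of $(y_i\theta^i + p^i k_i)\wedge(y_i\theta^i + p^i k_i) \cdot(\text{scalar})$ up to the cross term, so it is proportional to $\omega^2 \wedge(\text{something})$; more precisely $d_V\omega^2 = -\tfrac{1}{2(F^2+K^2)^{3/2}}(2y_i\theta^i + 2p^i k_i)\wedge(p^j k_j + y_j\theta^j) = 0$. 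Thus $d_V\omega^2 = 0$ identically, which immediately forces $\omega^2([X,Y]) = -d_V\omega^2(X,Y) + \rho(X)(\omega^2(Y)) - \rho(Y)(\omega^2(X)) = 0$ whenever $\omega^2(X) = \omega^2(Y) = 0$. Hence $V_{\xi_2}$ is involutive.

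Once involutivity is established, the Lie algebroid structure on $V_{\xi_2}$ is the standard one attached to an integrable subbundle: the anchor is the restriction of the anchor of $V$ (that is, the composition $V_{\xi_2}\hookrightarrow V \hookrightarrow T\mathcal{T}M$), and the bracket on $\Gamma(V_{\xi_2})$ is the restriction of the vector field bracket on $\Gamma(V)$, which is well-defined on sections of $V_{\xi_2}$ precisely by the involutivity just proved; the Leibniz identity \eqref{Liealg} is inherited verbatim. This identifies $(V_{\xi_2}, \iota, [\cdot,\cdot])$ as a Lie algebroid over $\mathcal{T}M$, the \emph{vertical Liouville Lie algebroid}.

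The only real obstacle is the bookkeeping in the computation of $d_V\omega^2$: one must be careful that $g_{ij}$, $g^{ij}$ are functions of $x$ only and are therefore $d_V$-closed, and that $y_i = g_{ij}y^j$, $p^i = g^{ij}p_j$ so that $d_V y_i = g_{ij}\theta^j$ and $d_V p^i = g^{ij}k_j$; with these the antisymmetrization makes the ``intrinsic'' part of $d_V\omega^2$ vanish and the remaining part is visibly proportional to $\omega^2$ wedged with itself. (Alternatively, and perhaps more transparently, one can observe that $\omega^2 = d_V h$ for $h = \sqrt{F^2+K^2}$ away from the zero section, since $d_V h = \tfrac{1}{2\sqrt{F^2+K^2}}(2y_i\theta^i + 2p^i k_i) = \omega^2$; then $d_V\omega^2 = d_V^2 h = 0$ is automatic, and the annihilator of an exact — hence closed — vertical $1$-form is always involutive, exactly the principle already invoked in the paragraph preceding \eqref{IIIx8}.)
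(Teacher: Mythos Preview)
Your argument is correct. The key observation that $\omega^2 = d_V\sqrt{F^2+K^2}$ is a vertical exact $1$-form, hence $d_V$-closed, immediately gives involutivity of $V_{\xi_2}=\ker\omega^2$; the Lie algebroid structure then follows as you describe.

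The paper does not actually prove this theorem: it simply refers to the analogous arguments in \cite{B-F1, I-M} and to the Finsler generalization in \cite{Ida-Po}. Your proof is therefore more self-contained than what appears here, and the exactness trick $\omega^2=d_Vh$ with $h=\sqrt{F^2+K^2}$ is cleaner than a direct bracket computation in the frame $\{P(\partial/\partial y^i),P(\partial/\partial p_j)\}$, which is presumably what the cited references do. One small remark: your first pass at computing $d_V\omega^2$ (``proportional to $\omega^2$ wedged with itself'') is a bit garbled, but since you immediately supersede it with the exactness argument this is harmless; I would simply drop the first computation and lead with $\omega^2=d_Vh$.
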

\begin{proof}
Follows using an argument similar to that used in \cite{B-F1, I-M}. It can be found in \cite{Ida-Po} for a more general case  when the manifold $M$ is endowed with a Finsler structure.
\end{proof}

Now, let us restrict to $V_{\xi_2}$ all the geometrical structures introduced in Section 2 for $V$, and we indicate this by overlines. Hence, we have
\begin{enumerate}
\item[$\bullet$] $\overline{\xi_1}=\xi_1$ since $\xi_1$ lies in $V_{\xi_2}$;
\item[$\bullet$] $\overline{\omega^2}=0$ since $\omega^2(X)=G(X,\xi_2)=0$ for every vertical vector field $X\in V_{\xi_2}$;
\item[$\bullet$] $\overline{G}=G|_{V_{\xi_2}}$;
\item[$\bullet$] $\overline{f}(X)=\overline{\phi}(X)+\overline{\omega^1}(X)\otimes\xi_2$ is an endomorphism of $V_{\xi_2}$ since 
\begin{displaymath}G\left(\overline{f}(X),\xi_2\right)=G(\overline{\phi}(X),\xi_2)+\overline{\omega^1}(X)G(\xi_2,\xi_2)=\omega^2(\overline{\phi}(X))+\overline{\omega^1}(X)=0.
\end{displaymath}
\end{enumerate}

We denote now $\overline{\xi}=\overline{\xi_1}$ and $\overline{\eta}=\overline{\omega^1}$. By Theorem \ref{tIII1} we obtain
\begin{theorem}
\label{tc}
The triple $(\overline{f},\overline{\xi},\overline{\eta})$ provides an almost contact structure on $V_{\xi_2}$, that is
\begin{enumerate}
\item[(i)] $\overline{f}^3+\overline{f}=0$, ${\rm rank}\overline{f}=2n-2=(2n-1)-1$;
\item[(ii)] $\overline{\eta}(\overline{\xi})=1$, $\overline{f}(\overline{\xi})=0$, $\overline{\eta}\circ\overline{f}=0$;
\item[(iii)] $\overline{f}^2(X)=-X+\overline{\eta}(X)\overline{\xi}$, for $X\in V_{\xi_2}$.
\end{enumerate}
\end{theorem}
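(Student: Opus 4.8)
The plan is to deduce the statement directly from Theorem \ref{tIII1} by restricting the corank-$2$ framed structure on $V$ to the hyperplane subbundle $V_{\xi_2}$, thereby ``dropping'' the $\xi_2$-direction. The only point that genuinely needs care is that $\overline{f}$ actually maps $V_{\xi_2}$ into itself, so that $\overline{f}^2$ and $\overline{f}^3$ make sense as endomorphisms of $V_{\xi_2}$; this is precisely the computation $G(\overline{f}(X),\xi_2)=\omega^2(\phi(X))+\omega^1(X)=0$ recorded just above the statement, which rests on Lemma \ref{ls1}(ii). Combined with $\overline{\xi}=\xi_1\in\Gamma(V_{\xi_2})$ (because $G(\xi_1,\xi_2)=0$), $\overline{\eta}=\omega^1|_{V_{\xi_2}}$, and the observation that $\overline{f}(X)=f(X)$ for $X\in\Gamma(V_{\xi_2})$ since $\omega^2(X)=0$, the theorem becomes a matter of substituting $\omega^2(X)=0$ into the identities of Theorem \ref{tIII1}.

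Concretely, I would first verify (ii): the equalities $\overline{\eta}(\overline{\xi})=\omega^1(\xi_1)=1$, $\overline{f}(\overline{\xi})=f(\xi_1)=0$, and $\overline{\eta}(\overline{f}(X))=\omega^1(f(X))=0$ are exactly the restrictions of the three identities in Theorem \ref{tIII1}(i). For (iii): since $\overline{f}=f|_{V_{\xi_2}}$ has image in $V_{\xi_2}$, one has $\overline{f}^2(X)=f^2(X)$ for every $X\in\Gamma(V_{\xi_2})$, and Theorem \ref{tIII1}(ii) gives $f^2(X)=-X+\omega^1(X)\xi_1+\omega^2(X)\xi_2=-X+\overline{\eta}(X)\overline{\xi}$ because $\omega^2(X)=0$. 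For (i): applying $\overline{f}$ to the identity in (iii) and using $\overline{f}(\overline{\xi})=0$ gives $\overline{f}^3(X)=-\overline{f}(X)$, i.e.\ $\overline{f}^3+\overline{f}=0$; and since $\ker f={\rm span}\{\xi_1,\xi_2\}$ by Theorem \ref{tIII1}(iii) whereas $\xi_2\notin V_{\xi_2}$, we get $\ker\overline{f}=\ker f\cap V_{\xi_2}={\rm span}\{\xi_1\}$, hence ${\rm rank}\,\overline{f}=\dim V_{\xi_2}-1=(2n-1)-1=2n-2$.

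I do not expect any genuine obstacle here; the entire content is bookkeeping between operators on $V$ and their restrictions to $V_{\xi_2}$. The one subtlety worth flagging is that $\phi$ does \emph{not} preserve $V_{\xi_2}$ (indeed $\phi(\xi_1)=-\xi_2$), so the cleanliness of the argument rests on two cancellations: that $\overline{f}(X)=f(X)$ on $V_{\xi_2}$ uses $\omega^2(X)=0$, while $f(X)\in V_{\xi_2}$ for such $X$ uses $\omega^2(\phi(X))=-\omega^1(X)$ from Lemma \ref{ls1}(ii). Once these are secured, the three assertions follow immediately.
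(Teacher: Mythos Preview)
Your proposal is correct and follows exactly the paper's own approach: the paper simply states that the theorem is obtained from Theorem \ref{tIII1}, after having verified (just above the statement) that $\overline{f}$ preserves $V_{\xi_2}$ via the computation $G(\overline{f}(X),\xi_2)=\omega^2(\overline{\phi}(X))+\overline{\omega^1}(X)=0$. Your write-up merely makes explicit the substitutions $\omega^2(X)=0$ and $\ker\overline{f}=\ker f\cap V_{\xi_2}$ that the paper leaves implicit.
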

Also, by Theorem \ref{tIII2} we obtain
\begin{theorem}
\label{thacs}
The Riemannian metric $\overline{G}$ verifies
\begin{equation}
\label{V7}
\overline{G}(\overline{f}(X),\overline{f}(Y))=\overline{G}(X,Y)-\overline{\eta}(X)\overline{\eta}(Y),
\end{equation}
for every vertical vector fields $X,Y\in V_{\xi_2}$.
\end{theorem}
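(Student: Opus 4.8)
The plan is to deduce Theorem \ref{thacs} directly from Theorem \ref{tIII2}, which is the corresponding identity for the full vertical bundle $V$, by simply restricting everything to the subbundle $V_{\xi_2}$. The key observation is that all the overlined objects are the honest restrictions of the original ones: $\overline{G}=G|_{V_{\xi_2}}$, $\overline{\eta}=\overline{\omega^1}=\omega^1|_{V_{\xi_2}}$, and $\overline{f}$ is the endomorphism of $V_{\xi_2}$ described just before the statement, namely $\overline{f}(X)=\overline{\phi}(X)+\overline{\omega^1}(X)\xi_2$ for $X\in\Gamma(V_{\xi_2})$.

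First I would take $X,Y\in\Gamma(V_{\xi_2})$ and note that, by the very definition of $V_{\xi_2}$ in \eqref{a1}, we have $\omega^2(X)=\omega^2(Y)=0$. Hence, when we evaluate \eqref{IIIx5} from Theorem \ref{tIII2} on such $X,Y$, the term $\omega^2(X)\omega^2(Y)$ drops out, leaving
\[
G(f(X),f(Y))=G(X,Y)-\omega^1(X)\omega^1(Y).
\]
Next I would argue that $f(X)=\overline{f}(X)$ and $f(Y)=\overline{f}(Y)$ for $X,Y\in\Gamma(V_{\xi_2})$: indeed $f(X)=\phi(X)-\omega^2(X)\xi_1+\omega^1(X)\xi_2=\phi(X)+\omega^1(X)\xi_2$ since $\omega^2(X)=0$, and a short check (already recorded in the bulleted list preceding the theorem) shows $\omega^2(f(X))=0$, so $f(X)\in\Gamma(V_{\xi_2})$ and coincides with $\overline{f}(X)$; one still has to observe that $\overline{\phi}(X)$ agrees with $\phi(X)$ modulo the span of $\xi_2$ — more precisely, decomposing $\phi(X)=\overline{\phi}(X)+\omega^2(\phi(X))\xi_2$ and using $\omega^2\circ\phi=-\omega^1$ from Lemma \ref{ls1}(ii) together with $\omega^1(X)$ appearing with the opposite sign — so that the formula for $\overline{f}$ stated in the text is recovered. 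Putting this together with $G|_{V_{\xi_2}}=\overline{G}$ and $\omega^1|_{V_{\xi_2}}=\overline{\eta}$ yields exactly \eqref{V7}.

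The only mildly delicate point — and the step I expect to be the main obstacle — is the bookkeeping that identifies $\overline{f}$ acting on $V_{\xi_2}$ with the restriction of $f$: one must be careful that $\overline{\phi}$ denotes the $V_{\xi_2}$-component of $\phi$ rather than $\phi$ itself, and verify that the $\xi_2$-components combine correctly using Lemma \ref{ls1}(i)-(iii). Once that identification is in hand, the proof is immediate. Alternatively, and perhaps more transparently, I could bypass the identification issue by noting that Theorem \ref{tIII1}(i)-(iii) already give $\overline{f}^2(X)=-X+\overline{\eta}(X)\overline{\xi}$ on $V_{\xi_2}$, and then argue as in the proof of Theorem \ref{tIII2} (or invoke \eqref{IIIx7} restricted to the relevant frame), but the cleanest route remains the direct specialization of \eqref{IIIx5} sketched above.
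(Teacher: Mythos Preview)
Your proposal is correct and follows exactly the paper's approach: the paper proves this theorem with the single phrase ``by Theorem \ref{tIII2}'', and what you have written simply spells out the restriction argument, namely that $\omega^2(X)=\omega^2(Y)=0$ kills the last term of \eqref{IIIx5} and that $\overline f=f|_{V_{\xi_2}}$. Your hesitation about the meaning of $\overline{\phi}$ is unnecessary: in the paper the bar on $\phi$ denotes restriction of the \emph{domain} only (the computation $\omega^2(\overline{\phi}(X))=-\overline{\omega^1}(X)$ preceding the theorem shows $\overline{\phi}(X)$ need not lie in $V_{\xi_2}$), so for $X\in V_{\xi_2}$ one has directly $\overline f(X)=\phi(X)+\omega^1(X)\xi_2=f(X)$ with no projection bookkeeping required.
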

Concluding, as $V_{\xi_2}$ is an integrable distribution, the ensemble $(\overline{f},\overline{\xi},\overline{\eta},\overline{G})$ is an almost contact Riemannian structure on the Lie algebroid $V_{\xi_2}$.

Let us consider now $\overline{\Phi}(X,Y)=\overline{G}(\overline{f}(X),Y)$, for $X,Y\in\Gamma(V_{\xi_2})$, the vertical $2$-form usually associated to the almost contact Riemannian structure from Theorem \ref{thacs}.

The vertical Liouville distribution $V_{\xi_2}$ is spanned by $\left\{P(\frac{\partial}{\partial y^{i}}),P(\frac{\partial}{\partial p_i})\right\}$, where by using \eqref{a4}, we have
\begin{equation}
\label{V8}
P(\frac{\partial}{\partial y^{i}})=\stackrel{1}{P^l_i}\frac{\partial}{\partial y^l}+\stackrel{3}{P_{li}}\frac{\partial}{\partial p_l}\,,\,P(\frac{\partial}{\partial p_{j}})=\stackrel{2}{P^j_k}\frac{\partial}{\partial p_k}+\stackrel{4}{P^{kj}}\frac{\partial}{\partial y^k}.
\end{equation}
Now, if we denote by $\overline{d}_V=d_V|_{V_{\xi_2}}$ then, by direct calculations in the basis $\left\{P(\frac{\partial}{\partial y^{i}}),P(\frac{\partial}{\partial p_i})\right\}$, we get
\begin{equation}
\label{V13}
\overline{d}_V\overline{\eta}=\frac{\overline{\Phi}}{\sqrt{F^2+K^2}}.
\end{equation}
\begin{remark}
The relation \eqref{V13} can be obtained directly from \eqref{IIIx10}, since a straightforward computation shows that $\overline{\varphi}=\varphi|_{V_{\xi_2}}=0$.
\end{remark}
Finally, $\overline{\eta}\wedge\left(\overline{d}_V\overline{\eta}\right)^{n-1}=\overline{\eta}\wedge\left(\frac{\overline{\Phi}}{\sqrt{F^2+K^2}}\right)^{n-1}\neq0$, which says that $\left(\overline{\eta},\frac{\overline{\Phi}}{\sqrt{F^2+K^2}}\right)$ is a contact structure on the vertical Liouville Lie algebroid $V_{\xi_2}$.

\begin{remark}
If $A$ is a Lie algebroid then it is well known that $A\oplus A^*$ has a natural structure of a Courant algebroid, and contact structures on Courant Lie algebroids are recently considered in \cite{Gra}. On the other hand if we consider a Riemannian vector bundle $(A,g_A)$ and $\mathcal{A}$ is the total space of the vector bundle $A\oplus A^*\rightarrow M$, then in a similar manner with our study we can construct an almost contact Riemannian structure on an integrable vertical Liouville distribution over $\mathcal{A}$. However, the most techniques used in the study of the geometry of the total space of a vector bundle $A$ (or its dual $A^*$) have some analogies (for the case of Lie algebroids) when investigate the geometry of the prolongations $\mathcal{T}^AA$ and $\mathcal{T}^AA^*$, respectively, and then, we can formulate the following problem: $\mathcal{T}^AA$ is a Lie algebroid over $A$ and $\mathcal{T}^AA^*$ is a Lie algebroid over $A^*$, and thus, in place of direct sum we can consider the direct product $\mathcal{T}^AA\times \mathcal{T}^AA^*\rightarrow A\times A^*$ (viewed as direct sum of $pr_1^{-1}(\mathcal{T}^AA)\rightarrow A\times A^*$ and of $pr_2^{-1}(\mathcal{T}^AA^*)\rightarrow A\times A^*$, where $pr_1:A\times A^*\rightarrow A$ and $pr_2:A\times A^*\rightarrow A^*$). In this way, we can consider a vertical subbundle of $\mathcal{T}^AA\times \mathcal{T}^AA^*$ as $V\left(\mathcal{T}^AA\times \mathcal{T}^AA^*\right)=pr_1^{-1}(V^AA)\oplus pr_2^{-1}(V^AA^*)$, where $V^AA$ is the vertical subbundle of $\mathcal{T}^AA$ and $V^AA^*$ is the vertical subbundle of $\mathcal{T}^AA^*$. Moreover, we can consider a Liouville (Euler) section of $V\left(\mathcal{T}^AA\times \mathcal{T}^AA^*\right)$ as direct sum of the canonical Liouville sections of $pr_1^{-1}(V^AA)$ and $pr_2^{-1}(V^AA^*)$ and a Liouville type subbundle of $V\left(\mathcal{T}^AA\times \mathcal{T}^AA^*\right)$ defined as the orthogonal subbundle of $V\left(\mathcal{T}^AA\times \mathcal{T}^AA^*\right)$ to the line bundle generated by Liouville section. Then, another problem to solve is the construction of an almost contact Riemannian structure on the vertical Liouville subbundle using the above procedure. 
\end{remark}

\section*{Acknowledgement}
The authors cordially thanks to Referees for useful comments and suggestions about the initial submission which improve substantially the presentation and the contents of this paper.  Also, many thanks go to Professor Gheorghe Piti\c{s} for fruitful conversations concerning this topic.

\noindent 
Cristian Ida \\
Department of Mathematics and Computer Science\\
Transilvania University of Bra\c{s}ov\\
Address: Bra\c{s}ov 500091, Str. Iuliu Maniu 50, Rom\^{a}nia\\
email:\textit{cristian.ida@unitbv.ro}
\medskip

\noindent 
Paul Popescu\\
Department of Applied Mathematics, University of Craiova\\
Address: Craiova, 200585,  Str. Al. Cuza, No. 13,  Rom\^{a}nia\\
 email:\textit{paul$_{-}$p$_{-}$popescu@yahoo.com}

\end{document}